\newtheorem{thm}{Theorem}[section]
\newtheorem{cor}[thm]{Corollary}
\newtheorem{pro}[thm]{Proposition}
\newtheorem{rmk}[thm]{Remark}
\newtheorem{defi}[thm]{Definition}
\newcommand {\emptycomment}[1]{}
\newcommand{\lon }{\,\rightarrow\,}
\newcommand{\be }{\begin{equation}}
\newcommand{\ee }{\end{equation}}
\newcommand{\g}{\mathfrak g}
\newcommand{\huaB}{\mathcal{B}}
\newcommand{\huaA}{\mathcal{A}}
\newcommand{\huaL}{\mathcal{L}}
\newcommand{\huaR}{\mathcal{R}}
\newcommand{\huaO}{{\mathcal{O}}}
\newcommand{\frks}{\mathfrak s}
\newcommand{\Id}{\rm{Id}}
\newcommand{\br}[1]{   [ \cdot,    \cdot  ]   }
\newcommand{\gl}{\mathfrak {gl}}
\newcommand{\ad}{\mathrm{ad}}
\begin{document}

\title[On Hom-pre-Lie bialgebras]{On Hom-pre-Lie bialgebras
}

\author{Shanshan Liu}
\address{Department of Mathematics, Jilin University, Changchun 130012, Jilin, China}
\email{shanshan18@mails.jlu.edu.cn}

\author{Abdenacer Makhlouf}
\address{University of Haute Alsace, IRIMAS- D\'epartement  de Math\'ematiques, Universit\'e de Haute Alsace, France}
\email{abdenacer.makhlouf@uha.fr}

\author{Lina Song}
\address{Department of Mathematics, Jilin University, Changchun 130012, Jilin, China}
\email{songln@jlu.edu.cn}


\begin{abstract}

In this paper we introduce the notion of  Hom-pre-Lie bialgebra in the general framework of the cohomology theory for Hom-Lie algebras. We show that  Hom-pre-Lie bialgebras, standard Manin triples for Hom-pre-Lie algebras and certain matched pairs of Hom-pre-Lie algebras are equivalent. Due to the usage of the cohomology theory, it makes us successfully study the coboundary Hom-pre-Lie bialgebras. The notion of  Hom-$\frks$-matrix is introduced, by which we can construct Hom-pre-Lie bialgebras naturally. Finally we introduce the notions of Hom-$\huaO$-operators on Hom-pre-Lie algebras and Hom-L-dendriform algebras, by which we construct Hom-$\frks$-matrices.
\end{abstract}

\footnotetext{{\it{MSC}}: 16T25, 17B62, 17B99.}
\keywords{ Hom-pre-Lie algebra,  Manin triple, Hom-pre-Lie bialgebra, Hom-$\frks$-equation}

\maketitle
\vspace{-5mm}
\tableofcontents

\allowdisplaybreaks


\section{Introduction}

For a given algebraic structure determined by a set of multiplications of various arities and a set of relations among the operations, a bialgebra structure on this algebra is obtained
by a corresponding set of comultiplications together with a set of compatibility conditions between the multiplications and comultiplications. A good compatibility condition is prescribed   by a rich structure theory and effective constructions.
The most famous examples of bialgebras are  associative bialgebras and Lie bialgebras, which have important applications in both mathematics and mathematical physics.

\vspace{2mm}

The notion of a Hom-Lie algebra was introduced by Hartwig, Larsson
and Silvestrov in \cite{HLS} as part of a study of deformations of
the Witt and the Virasoro algebras. In a Hom-Lie algebra, the Jacobi
identity is twisted by a linear map (homomorphism), and it is called  Hom-Jacobi identity. Different types of Hom-algebras were introduced and widely studied. Recently, in \cite{ELMS}, Elchinger,  Lundengard,  Makhlouf and  Silvestrov extend the result in \cite{HLS} to the case of $(\sigma,\tau)$-derivations. Due to the importance of the aforementioned bialgebra theory, the bialgebra theory for Hom-algebras was deeply studied in \cite{Cai-Sheng,MS3,sheng1,TY,Yao1,Yao3}

\emptycomment{
In particular, Hom-structures on semisimple Lie algebras were studied in \cite{Jin1,Jin2}; geometrization of Hom-Lie algebras were studied in \cite{LGT,CLS};
In \cite{sheng1}, the author gave the definition of an admissible Hom-Lie algebra and introduced a new definition of a Hom-Lie bialgebra. In \cite{Cai-Sheng}, the author constructed the dual representation of a representation of a Hom-Lie algebra and introduced the notation of a purely Hom-Lie bialgebra. In \cite{TY}, the author studied Hom-Lie bialgebras by a new notion of the dual representation of a representation of a Hom-Lie algebra, motivated by the essential connection between Hom-Lie bialgebras and Manin triples. In \cite{Yao1}, the author studied a twisted version of the Yang-Baxter Equation, called the Hom-Yang-Baxter Equation (HYBE), and introduced quasi-triangular bialgebras. In \cite{Yao3}, the author introduced the classical Hom-Yang-Baxter equation (CHYBE) and the closely related structure of Hom-Lie bialgebras, which generalize Drinfel¡¯d¡¯s Lie bialgebras.
}

Pre-Lie algebras (also called  left-symmetric algebras,
quasi-associative algebras, Vinberg algebras and so on) are a
class of nonassociative algebras that  appeared in
many fields in mathematics and mathematical physics.
See  the survey \cite{Pre-lie algebra in geometry} and the references therein for more details. The notion of  left-symmetric bialgebra was introduced in \cite{Bai}. The author also introduced the notion of   $\frks$-matrices   in \cite{Bai} to produce left-symmetric bialgebras.
The notion of a Hom-pre-Lie algebra was introduced in \cite{MS2}  and play important roles in the study of Hom-Lie bialgebras and Hom-Lie 2-algebras \cite{sheng1,SC}. Recently, Hom-pre-Lie algebras were studied from several aspects. The geometrization of Hom-pre-Lie algebras was studied in \cite{Qing}; universal $\alpha$-central extensions of Hom-pre-Lie algebras were studied in \cite{sunbing}.

The purpose of this paper is to give a systematic study of the bialgebra theory for Hom-pre-Lie algebras. Note that the notion of a Hom-pre-Lie bialgebra was already introduced in \cite{QH} under the terminology of Hom-left-symmetric algebra. However the bialgebra structure given in \cite{QH} does not enjoy a coboundary theory. This is also one of our motivation to study Hom-pre-Lie bialgebras that enjoy a rich structure theory. Our Hom-pre-Lie bialgebra structure enjoy the following properties:
\begin{itemize}
  \item  equivalent to a Manin triple for Hom-pre-Lie algebras as well as certain matched pair of Hom-pre-Lie algebras;
  \item Hom-$\frks$-matrices can be defined to produce Hom-pre-Lie bialgebras;
  \item $\huaO$-operators on Hom-pre-Lie algebras can be defined to give Hom-$\frks$-matrices in the semidirect product Hom-pre-Lie algebras.
\end{itemize}

The paper is organized as follows. In Section 2, we recall relevant definitions and results about matched pairs of Hom-Lie algebras and matched pairs of Hom-pre-Lie algebras. In Section 3, we introduce the notion of  Hom-pre-Lie bialgebra and show that it is equivalent to Manin triples as well as matched pairs. In Section 4, we study coboundary Hom-pre-Lie bialgebras and introduce the notion of a Hom-$\frks$-matrix, by which we can construct a Hom-pre-Lie bialgebra naturally. In Section 5, we introduce the notion of an $\huaO$-operator and the notion of a Hom-L-dendriform algebra, by which we can construct Hom-$\frks$-matrices.

\vspace{2mm}
\noindent
{\bf Acknowledgements. }  We give warmest thanks to Yunhe Sheng for helpful comments.

\section{Preliminaries}

In this section, we briefly recall matched pairs of Hom-Lie algebras and matched pairs of Hom-pre-Lie algebras, as preparation for our later study of Hom-pre-Lie bialgebras.
\begin{defi}{\rm(\cite{HLS})}
A {\bf Hom-Lie algebra} is a triple $(\g,[\cdot,\cdot]_\g,\phi_\g)$ consisting of a linear space $\g$, a skew-symmetric bilinear map $[\cdot,\cdot]_\g:\wedge^2\g\longrightarrow \g$ and an algebra morphism $\phi_\g:\g\longrightarrow \g $, satisfying:
  \begin{equation}
[\phi_\g(x),[y,z]_\g]_\g+[\phi_\g(y),[z,x]_\g]_\g+[\phi_\g(z),[x,y]_\g]_\g=0,\quad
\forall~x,y,z\in \g.
\end{equation}
A Hom-Lie algebra $(\g,[\cdot,\cdot]_\g,\phi_\g)$ is said to be regular if $\phi_\g$ is invertible.
  \end{defi}

\begin{defi}{\rm(\cite{AEM,sheng3})}\label{defi:hom-lie representation}
 A {\bf representation} of a Hom-Lie algebra $(\g,[\cdot,\cdot]_\g,\phi_\g)$ on
 a vector space $V$ with respect to $\beta\in\gl(V)$ is a linear map
  $\rho:\g\longrightarrow \gl(V)$, such that for all
  $x,y\in \g$, the following equalities are satisfied:
\begin{eqnarray}
\label{hom-lie-rep-1}\rho(\phi_\g(x))\circ \beta&=&\beta\circ \rho(x),\\
\label{hom-lie-rep-2}\rho([x,y]_\g)\circ \beta&=&\rho(\phi_\g(x))\circ\rho(y)-\rho(\phi_\g(y))\circ\rho(x).
\end{eqnarray}
  \end{defi}
We denote a representation by $(V,\beta,\rho)$. For all $x\in\mathfrak{g}$, we define $\ad_{x}:\mathfrak{g}\lon \mathfrak{g}$ by
\begin{eqnarray}
\ad_{x}(y)=[x,y]_\g,\quad\forall y \in \mathfrak{g}.
\end{eqnarray}
Then $\ad:\g\longrightarrow\gl(\frak g)$ is a representation of the Hom-Lie algebra $(\mathfrak{g},[\cdot,\cdot]_\g,\phi_\g)$ on $\g$ with respect to $\phi_\g$, which is called the adjoint representation.
\begin{pro}
Let $(\g,[\cdot,\cdot]_\g,\phi_\g)$ be a Hom-Lie algebra, $(V,\beta_V,\rho_V)$ and $(W,\beta_W,\rho_W,)$ its representations. Then $(V\otimes W,\beta_V\otimes \beta_W,\rho_V\otimes \beta_W+\beta_V\otimes \rho_W)$ is a representation of $(\g,[\cdot,\cdot]_\g,\phi_\g)$.
\end{pro}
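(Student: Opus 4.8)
The plan is to verify directly the two defining identities of a representation from Definition \ref{defi:hom-lie representation}, namely \eqref{hom-lie-rep-1} and \eqref{hom-lie-rep-2}, for the candidate triple $\big(V\otimes W,\ \beta_V\otimes\beta_W,\ \rho_V\otimes\beta_W+\beta_V\otimes\rho_W\big)$. Throughout I abbreviate $\rho:=\rho_V\otimes\beta_W+\beta_V\otimes\rho_W$ and $\beta:=\beta_V\otimes\beta_W$, and I use repeatedly the elementary rule $(f_1\otimes f_2)\circ(g_1\otimes g_2)=(f_1\circ g_1)\otimes(f_2\circ g_2)$ for composing tensor products of linear maps, which reduces everything to manipulations of the individual maps $\rho_V,\rho_W,\beta_V,\beta_W$.

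First I would check the twisting compatibility \eqref{hom-lie-rep-1}. Expanding $\rho(\phi_\g(x))\circ\beta$ into its two summands, each carries a factor $\rho_V(\phi_\g(x))\circ\beta_V$ or $\rho_W(\phi_\g(x))\circ\beta_W$; applying \eqref{hom-lie-rep-1} for $\rho_V$ and for $\rho_W$ rewrites these as $\beta_V\circ\rho_V(x)$ and $\beta_W\circ\rho_W(x)$, and refactoring the resulting tensors gives precisely $\beta\circ\rho(x)$. This step is short and purely formal.

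The substance is in \eqref{hom-lie-rep-2}. For the left-hand side $\rho([x,y]_\g)\circ\beta$ I would expand into two summands and apply \eqref{hom-lie-rep-2} for $\rho_V$ and $\rho_W$ separately; this produces the diagonal term $\big(\rho_V(\phi_\g(x))\circ\rho_V(y)-\rho_V(\phi_\g(y))\circ\rho_V(x)\big)\otimes\beta_W^{2}$ together with its $W$-counterpart $\beta_V^{2}\otimes\big(\rho_W(\phi_\g(x))\circ\rho_W(y)-\rho_W(\phi_\g(y))\circ\rho_W(x)\big)$. For the right-hand side $\rho(\phi_\g(x))\circ\rho(y)-\rho(\phi_\g(y))\circ\rho(x)$, each of the two products expands into four terms; using \eqref{hom-lie-rep-1} to convert the factors $\rho_V(\phi_\g(x))\circ\beta_V$ and $\rho_W(\phi_\g(x))\circ\beta_W$, the two ``pure'' terms of each product reproduce exactly the diagonal contributions above, while the remaining ``mixed'' terms have the shape $\beta_V\circ\rho_V(\cdot)\otimes\beta_W\circ\rho_W(\cdot)$.

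The one point that merits care, and which I regard as the crux, is the fate of these mixed cross-terms. The difference $\rho(\phi_\g(x))\circ\rho(y)-\rho(\phi_\g(y))\circ\rho(x)$ is arranged so that the four mixed terms pair off and cancel: the term $\beta_V\circ\rho_V(x)\otimes\beta_W\circ\rho_W(y)$ coming from the first product is matched by an identical term with opposite sign coming from the second, and likewise for $\beta_V\circ\rho_V(y)\otimes\beta_W\circ\rho_W(x)$. After this cancellation only the diagonal terms survive, and they coincide with the expansion of the left-hand side, establishing \eqref{hom-lie-rep-2}. There is no deeper obstruction; in particular, invertibility of $\phi_\g$ is not required anywhere.
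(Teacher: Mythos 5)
Your verification is correct: both axioms \eqref{hom-lie-rep-1} and \eqref{hom-lie-rep-2} follow by the expansion you describe, and you correctly identify that the four mixed cross-terms cancel pairwise in the difference $\rho(\phi_\g(x))\circ\rho(y)-\rho(\phi_\g(y))\circ\rho(x)$ after the twisting identity is used to rewrite $\rho_V(\phi_\g(x))\circ\beta_V$ and $\rho_W(\phi_\g(x))\circ\beta_W$. The paper states this proposition without proof, evidently regarding it as routine, and your direct computation is exactly the argument one would supply; your remark that invertibility of $\phi_\g$ plays no role is also accurate.
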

\begin{defi}{\rm(\cite{sheng1})}
A {\bf matched pair of Hom-Lie algebras}, which is denoted by $(\g,\g',\rho,\rho')$, consists of two Hom-Lie algebras $(\g,[\cdot,\cdot]_\g,\phi_\g)$ and $(\g',[\cdot,\cdot]_{\g'},\phi_{\g'})$, together with representations $\rho:\g\longrightarrow \gl(g')$ and $\rho':\g'\longrightarrow \gl(g)$ with respect to $\phi_{\g'}$ and $\phi_\g$ respectively, such that for all $ x,y \in \g, x',y'\in \g'$ the following conditions are satisfied:
\begin{eqnarray}
  \label{matche-pair-1}\rho'(\phi_\g'(x'))[x,y]_\g&=&[\rho'(x')(x),\phi_\g(y)]_\g+[\phi_\g(x),\rho'(x')(y)]_\g\\
  \nonumber &&+\rho'(\rho(y)(x'))(\phi_\g(x))-\rho'(\rho(x)(x'))(\phi_\g(y)),\\
  \label{matche-pair-2}\rho(\phi_\g(x))[x',y']_{\g'}&=&[\rho(x)(x'),\phi_{\g'}(y')]_{\g'}+[\phi_{\g'}(x'),\rho(x)(y')]_{\g'}\\
  \nonumber &&+\rho(\rho'(y')(x))(\phi_{\g'}(x'))-\rho(\rho'(x')(x))(\phi_{\g'}(y')).
\end{eqnarray}
\end{defi}
We define $\phi_d:\g\oplus \g'\longrightarrow \g\oplus \g'$ by
\begin{equation}
\phi_d(x,x')=(\phi_\g(x),\phi_{\g'}(x')),
\end{equation}
and define a skew-symmetric bilinear map $[\cdot,\cdot]_d:\wedge^2(\g\oplus \g')\longrightarrow\g\oplus \g'$ by
\begin{equation}
[(x,x'),(y,y')]_d=([x,y]_\g+\rho'(x')(y)-\rho'(y')(x),[x',y']_{\g'}+\rho(x)(y')-\rho(y)(x')).
\end{equation}
\begin{thm}{\rm(\cite{sheng1})}
With the above notations, $(\g\oplus \g',[\cdot,\cdot]_d,\phi_d)$ is a Hom-Lie algebra if and only if $(\g,\g',\rho,\rho')$ is a matched pair of Hom-Lie algebras.
\end{thm}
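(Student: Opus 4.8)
The plan is to verify the two defining axioms of a Hom-Lie algebra for $(\g\oplus\g',[\cdot,\cdot]_d,\phi_d)$ and to track exactly which of them is responsible for the matched pair conditions. Skew-symmetry of $[\cdot,\cdot]_d$ is built into its definition, so a Hom-Lie structure amounts to (a) $\phi_d$ being an algebra morphism for $[\cdot,\cdot]_d$, and (b) the Hom-Jacobi identity for $[\cdot,\cdot]_d$. I will argue that (a) holds unconditionally, so that all the content sits in (b), which I will show is equivalent to \eqref{matche-pair-1} together with \eqref{matche-pair-2}.

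First I would dispose of (a). Expanding both $\phi_d([(x,x'),(y,y')]_d)$ and $[\phi_d(x,x'),\phi_d(y,y')]_d$ and comparing the $\g$- and $\g'$-components, the bracket terms agree because $\phi_\g$ and $\phi_{\g'}$ are morphisms, while the terms carrying $\rho'$ (resp. $\rho$) agree precisely because of the intertwining relations \eqref{hom-lie-rep-1}: the identity $\rho'(\phi_{\g'}(x'))\circ\phi_\g=\phi_\g\circ\rho'(x')$ lets one pull $\phi_\g$ through $\rho'$, and symmetrically for $\rho$. Thus $\phi_d$ is always a morphism, independently of the matched pair axioms.

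Next I would expand the Hom-Jacobiator $J_d\big((x,x'),(y,y'),(z,z')\big)=[\phi_d(x,x'),[(y,y'),(z,z')]_d]_d+\text{c.p.}$ and project onto the two summands. In the $\g$-component the terms organize into three families: the terms built only from $[\cdot,\cdot]_\g$, whose cyclic sum is the Hom-Jacobiator of $\g$ and hence vanishes; the terms of the form $\rho'(\phi_{\g'}(\cdot))\rho'(\cdot)$ together with $\rho'([\cdot,\cdot]_{\g'})(\phi_\g(\cdot))$, which cancel in the cyclic sum once \eqref{hom-lie-rep-2} for $\rho'$ is used to rewrite $\rho'([\cdot,\cdot]_{\g'})$ as the corresponding commutator of composites; and the remaining mixed terms, namely $[\phi_\g(\cdot),\rho'(\cdot)(\cdot)]_\g$, $\rho'(\phi_{\g'}(\cdot))([\cdot,\cdot]_\g)$ and $\rho'(\rho(\cdot)(\cdot))(\phi_\g(\cdot))$. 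The vanishing of this last family for all arguments is exactly \eqref{matche-pair-1}. By the symmetry exchanging $\g\leftrightarrow\g'$ and $\rho\leftrightarrow\rho'$, the $\g'$-component reduces in the same way, the pure $\g'$ terms being killed by the Hom-Jacobi identity of $\g'$, the double-$\rho$ terms by \eqref{hom-lie-rep-2} for $\rho$, and the residual mixed family being precisely \eqref{matche-pair-2}. Since every reduction used is an equivalence, $J_d\equiv 0$ holds iff both \eqref{matche-pair-1} and \eqref{matche-pair-2} hold, which yields the stated equivalence in both directions at once.

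The hard part will be the bookkeeping in (b): after the full cyclic expansion each component carries on the order of a dozen terms, and one must apply \eqref{hom-lie-rep-1} to push the twist maps past the representations and \eqref{hom-lie-rep-2} to convert $\rho$ (resp. $\rho'$) of a bracket into a difference of composites, so that the double-representation terms cancel cleanly and the surviving relation is exactly the corresponding matched-pair axiom, with no spurious leftover. Keeping track of the signs generated by the three cyclic permutations, and confirming that the purely $\g$ and purely $\g'$ pieces disappear by the Hom-Jacobi identities of the two factors, is the delicate step; the matched-pair conditions are engineered to absorb precisely the cross terms that remain.
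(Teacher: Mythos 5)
The paper itself offers no proof of this theorem: it is quoted from \cite{sheng1} without argument, so there is no internal proof to compare against. Your direct-verification strategy is the standard one and your structural claims are correct: skew-symmetry is built in; the multiplicativity of $\phi_d$ follows from $\phi_\g,\phi_{\g'}$ being morphisms together with the intertwining condition \eqref{hom-lie-rep-1} for $\rho$ and $\rho'$, independently of \eqref{matche-pair-1}--\eqref{matche-pair-2}; and in each component of the Hom-Jacobiator the purely bracket terms vanish by the Hom-Jacobi identity of the corresponding factor, the double-representation terms cancel against the $\rho'([\cdot,\cdot]_{\g'})\phi_\g$ (resp.\ $\rho([\cdot,\cdot]_\g)\phi_{\g'}$) terms via \eqref{hom-lie-rep-2}, and what survives is a cyclic sum of three instances of \eqref{matche-pair-1} (resp.\ \eqref{matche-pair-2}). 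Two points deserve to be made explicit if you write this up in full. First, the surviving cross terms form a \emph{sum} of three instances of the matched-pair identity, so the forward implication (Hom-Jacobi $\Rightarrow$ matched pair) needs the specialization that isolates a single instance, e.g.\ setting $y'=z'=0$ and letting $x',y,z$ be arbitrary; your phrase ``for all arguments'' covers this but the step should appear. Second, the cancellation of the double-$\rho'$ terms against $-\rho'([y',z']_{\g'})\phi_\g(x)$ uses \eqref{hom-lie-rep-2} as an identity of operators applied to $\phi_\g$-shifted arguments, which is exactly the form in which the representation axiom is stated, so no extra regularity of $\phi_\g$ is needed. With those details supplied, the argument is complete and is surely the same computation as in \cite{sheng1}.
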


\begin{defi}{\rm(\cite{MS2})}
A {\bf Hom-pre-Lie algebra} $(A,\cdot,\alpha)$ is a vector space $A$ equipped with a bilinear product $\cdot:A\otimes A\longrightarrow A$, and $\alpha\in \gl(A)$, such that for all $x,y,z\in A$, $\alpha(x \cdot y)=\alpha(x)\cdot \alpha(y)$ and the following equality is satisfied:
\begin{eqnarray}
(x\cdot y)\cdot \alpha(z)-\alpha(x)\cdot (y\cdot z)=(y\cdot x)\cdot \alpha(z)-\alpha(y)\cdot (x\cdot z).
\end{eqnarray}
A Hom-pre-Lie algebra $(A,\cdot,\alpha)$ is said to be regular if $\alpha$ is invertible.
\end{defi}
Let $(A,\cdot,\alpha)$ be a Hom-pre-Lie algebra. We always assume that it is regular, i.e. $\alpha$ is invertible. The commutator $[x,y]=x\cdot y-y\cdot x$ gives a Hom-Lie algebra $(A,[\cdot,\cdot],\alpha)$, which is denoted by $A^C$ and called the sub-adjacent Hom-Lie algebra of $(A,\cdot,\alpha)$.
\begin{defi}{\rm(\cite{LSS})}
 A {\bf morphism} from a Hom-pre-Lie algebra $(A,\cdot,\alpha)$ to a Hom-pre-Lie algebra $(A',\cdot',\alpha')$ is a linear map $f:A\longrightarrow A'$ such that for all
  $x,y\in A$, the following equalities are satisfied:
\begin{eqnarray}
\label{homo-1}f(x\cdot y)&=&f(x)\cdot' f(y),\hspace{3mm}\forall x,y\in A,\\
\label{homo-2}f\circ \alpha&=&\alpha'\circ f.
\end{eqnarray}
\end{defi}

\begin{defi}{\rm(\cite{QH})}\label{defi:hom-pre representation}
 A {\bf representation} of a Hom-pre-Lie algebra $(A,\cdot,\alpha)$ on a vector space $V$ with respect to $\beta\in\gl(V)$ consists of a pair $(\rho,\mu)$, where $\rho:A\longrightarrow \gl(V)$ is a representation of the sub-adjacent Hom-Lie algebra $A^C$ on $V$ with respect to $\beta\in\gl(V)$, and $\mu:A\longrightarrow \gl(V)$ is a linear map, for all $x,y\in A$, satisfying:
\begin{eqnarray}
\label{rep-1}\beta\circ \mu(x)&=&\mu(\alpha(x))\circ \beta,\\
\label{rep-2}\mu(\alpha(y))\circ\mu(x)-\mu(x\cdot y)\circ \beta&=&\mu(\alpha(y))\circ\rho(x)-\rho(\alpha(x))\circ\mu(y).
\end{eqnarray}
\end{defi}
We denote a representation of a Hom-pre-Lie algebra $(A,\cdot,\alpha)$ by $(V,\beta,\rho,\mu)$. Furthermore, Let $L,R:A\longrightarrow \gl(A)$ be linear maps, where $L_xy=x\cdot y,\quad R_xy=y\cdot x$. Then $(A,\alpha,L,R)$ is also a representation, which we call the regular representation.
\begin{pro}
Let $(A,\cdot,\alpha)$ be a Hom-pre-Lie algebra. For any integer $s$, define $L^s,R^s:A\longrightarrow \gl(A)$ by
$$L^s_xy=\alpha^s(x)\cdot y,\quad R^s_xy=y\cdot \alpha^s(x), \quad \forall x,y \in A.$$
Then $(A,\alpha,L^s,R^s)$ is a representation of a Hom-Lie algebra $(A,\cdot,\alpha)$.
\end{pro}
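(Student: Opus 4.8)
The plan is to recognize that the four-tuple $(A,\alpha,L^s,R^s)$ is meant to be a representation of the Hom-pre-Lie algebra $(A,\cdot,\alpha)$ (the four slots match the pattern $(V,\beta,\rho,\mu)$ of a Hom-pre-Lie representation, so the word ``Hom-Lie'' in the statement should read ``Hom-pre-Lie''), and to deduce it from the already-recorded fact that the regular representation $(A,\alpha,L,R)$ is such a representation. The key structural observation is that, upon writing things out, $L^s=L\circ\alpha^s$ and $R^s=R\circ\alpha^s$ as maps $A\to\gl(A)$, since $L^s_xy=\alpha^s(x)\cdot y=L_{\alpha^s(x)}y$ and similarly $R^s_xy=R_{\alpha^s(x)}y$. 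In particular the case $s=0$ recovers the regular representation, which makes the reduction transparent.

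Next I would verify that $\alpha^s$ is an automorphism of $(A,\cdot,\alpha)$ commuting with $\alpha$. Multiplicativity of $\alpha$ gives $\alpha(x\cdot y)=\alpha(x)\cdot\alpha(y)$; because $\alpha$ is invertible (regularity), substituting $x=\alpha^{-1}(u),\,y=\alpha^{-1}(v)$ shows that $\alpha^{-1}$ is multiplicative as well, whence $\alpha^s(x\cdot y)=\alpha^s(x)\cdot\alpha^s(y)$ for every integer $s$; and $\alpha^s\circ\alpha=\alpha\circ\alpha^s$ is immediate. Consequently $\alpha^s$ is also a morphism of the sub-adjacent Hom-Lie algebra $A^C$, that is, $\alpha^s([x,y])=[\alpha^s(x),\alpha^s(y)]$.

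The heart of the argument is a twisting principle: if $(V,\beta,\rho,\mu)$ is a representation of $(A,\cdot,\alpha)$ and $\gamma$ is an automorphism of $(A,\cdot,\alpha)$ commuting with $\alpha$, then $(V,\beta,\rho\circ\gamma,\mu\circ\gamma)$ is again a representation. I would check the four defining identities in turn: conditions \eqref{hom-lie-rep-1} and \eqref{hom-lie-rep-2} for $\rho\circ\gamma$ (so that $\rho\circ\gamma$ represents $A^C$), and conditions \eqref{rep-1} and \eqref{rep-2} for $\mu\circ\gamma$. Each reduces to the corresponding identity for $(\rho,\mu)$ after replacing the arguments $x,y$ by $\gamma(x),\gamma(y)$ and pushing $\gamma$ through $\alpha$, through the bracket, and through the product by means of the automorphism property. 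Applying this with $\gamma=\alpha^s$ to the regular representation $(A,\alpha,L,R)$ then yields exactly $(A,\alpha,L^s,R^s)=(A,\alpha,L\circ\alpha^s,R\circ\alpha^s)$, as desired.

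The computations are entirely routine; the only thing requiring care is the bookkeeping, namely ensuring that each occurrence of $\gamma=\alpha^s$ is moved to the correct slot using $\gamma\alpha=\alpha\gamma$, $\gamma(x\cdot y)=\gamma(x)\cdot\gamma(y)$ and $\gamma([x,y])=[\gamma(x),\gamma(y)]$. The main (and minor) obstacle is thus verifying the nonlinear condition \eqref{rep-2} without a placement or sign slip. Alternatively, one could bypass the twisting lemma and substitute $\alpha^s$ directly into \eqref{hom-lie-rep-1}--\eqref{rep-2}, but the reduction to the regular representation is cleaner and isolates all the representation content into a single reusable principle.
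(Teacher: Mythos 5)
Your proof is correct, but it takes a genuinely different route from the paper. The paper proves the proposition by direct verification: it computes $L^s_{\alpha(x)}\alpha(y)=\alpha(L^s_xy)$ to get the compatibility with $\alpha$, then expands $L^s_{\alpha(x)}L^s_y(z)-L^s_{\alpha(y)}L^s_x(z)$ and uses the Hom-pre-Lie identity together with multiplicativity of $\alpha^s$ to land on $L^s_{[x,y]}\alpha(z)$, and asserts the analogous check of \eqref{rep-2} for $R^s$ ``similarly.'' You instead factor the statement as $L^s=L\circ\alpha^s$, $R^s=R\circ\alpha^s$ and prove a reusable twisting lemma: precomposing a representation $(V,\beta,\rho,\mu)$ with an automorphism $\gamma$ of $(A,\cdot,\alpha)$ commuting with $\alpha$ yields another representation. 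Your verification of the four axioms for $(\rho\circ\gamma,\mu\circ\gamma)$ goes through exactly as you describe (each identity becomes the original identity evaluated at $\gamma(x),\gamma(y)$), and your use of regularity to make $\alpha^{-1}$, hence $\alpha^s$ for negative $s$, multiplicative is the right point to flag --- the paper silently relies on the same standing regularity assumption. You are also right that ``Hom-Lie algebra $(A,\cdot,\alpha)$'' in the statement should read ``Hom-pre-Lie algebra,'' as the paper's own proof checks \eqref{rep-2}. What your approach buys is a clean general principle that isolates all the representation-theoretic content; what it costs is that the base case $s=0$ is exactly the claim that $(A,\alpha,L,R)$ is the regular representation, which the paper states without proof just before the proposition, so the direct computation the paper performs is not really avoided, only relocated into that unproven assertion. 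Either route is acceptable; yours is arguably the more transparent of the two.
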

\begin{proof}
For all $x,y,z\in A$, we have
\begin{eqnarray}
L^s_{\alpha(x)}\alpha(y)=\alpha^{s+1}(x)\cdot \alpha(y)=\alpha(\alpha^s(x)\cdot y)=\alpha(L^s_xy),
\end{eqnarray}
which implies that $L^s_{\alpha(x)}\circ \alpha=\alpha\circ L^s_x$. Similarly, we have $R^s_{\alpha(x)}\circ \alpha=\alpha\circ R^s_x$.

By the definition of a Hom-pre-Lie algebra, we have
\begin{eqnarray}
\nonumber L^s_{\alpha(x)}L^s_y(z)-L^s_{\alpha(y)}L^s_x(z)
\nonumber&=&\alpha^{s+1}(x)\cdot(\alpha^s(y)\cdot z)-\alpha^{s+1}(y)\cdot(\alpha^s(x)\cdot z)\\
\nonumber&=&(\alpha^s(x)\cdot \alpha^s(y))\cdot\alpha(z)-(\alpha^s(y)\cdot \alpha^s(x))\cdot\alpha(z)\\
\nonumber &=&\alpha^s([x,y])\cdot\alpha(z)\\
\nonumber &=&L^s_{[x,y]}\alpha(z).
\end{eqnarray}
Similarly, we have
\begin{equation}
R^s_{\alpha(y)}\circ R^s_x-R^s_{x\cdot y}\circ \alpha=R^s_{\alpha(y)}\circ L^s_x-L^s_{\alpha(x)}\circ R^s_y.
\end{equation}
This finishes the proof.
\end{proof}

The notion of matched pair of Hom-pre-Lie algebras was given in \cite{QH}.

\begin{defi}{\rm(\cite{QH})}
A {\bf matched pair of Hom-pre-Lie algebras} $(A,B,l_A,r_A,l_B,r_B)$ consists of two Hom-pre-Lie algebras $(A,\cdot,\alpha_A)$ and $(B,\circ,\alpha_B)$, together with linear maps $l_A,r_A:A\longrightarrow \gl(B)$ and $l_B,r_B:B\longrightarrow \gl(A)$ such that $(B,\alpha_B,l_A,r_A)$ and $(A,\alpha_A,l_B,r_B)$ are representations and for all $x,y\in A, a,b\in B$, satisfying the following conditions:
\begin{eqnarray}
  \label{pre-matche-pair-1}r_A(\alpha_A(x))\{a,b\}&=&r_A(l_B(b)x)\alpha_B(a)-r_A(l_B(a)x)\alpha_B(b)+\alpha_B(a)\circ(r_A(x)b)\\
  \nonumber &&-\alpha_B(b)\circ(r_A(x)a),\\
  \label{pre-matche-pair-2}l_A(\alpha_A(x))(a\circ b)&=&-l_A(l_B(a)x-r_B(a)x)\alpha_B(b)+(l_A(x)a-r_A(x)a)\circ \alpha_B(b)\\
  \nonumber &&+r_A(r_B(b)x)\alpha_B(a)+\alpha_B(a)\circ(l_A(x)b),\\
  \label{pre-matche-pair-3}r_B(\alpha_B(a))[x,y]&=&r_B(l_A(y)a)\alpha_A(x)-r_B(l_A(x)a)\alpha_A(y)+\alpha_A(x)\cdot(r_B(a)y)\\
  \nonumber &&-\alpha_A(y)\cdot(r_B(a)x),\\
  \label{pre-matche-pair-4}l_B(\alpha_B(a))(x\cdot y)&=&-l_B(l_A(x)a-r_A(x)a)\alpha_A(y)+(l_B(a)x-r_B(a)x)\cdot \alpha_A(y)\\
  \nonumber &&+r_B(r_A(y)a)\alpha_A(x)+\alpha_A(x)\cdot(l_B(a)y),
\end{eqnarray}
where $[\cdot,\cdot]$ is the Lie bracket of the sub-adjacent Hom-Lie algebra $A^C$ and $\{\cdot,\cdot\}$ is the Lie bracket of the sub-adjacent Hom-Lie algebra $B^C.$
\end{defi}
We define a bilinear operation $\diamond:\otimes^2(A\oplus B)\lon(A\oplus B)$ by
\begin{equation}\label{Hom-pre-lie matched pair}
(x+a)\diamond(y+b):=x\cdot y+l_B(a)y+r_B(b)x+a\circ b+l_A(x)b+r_A(y)a,
\end{equation}
and a linear map $\alpha_A\oplus \alpha_B:A\oplus B\longrightarrow A\oplus B$ by
\begin{equation}
(\alpha_A\oplus \alpha_B)(x+a):=\alpha_A(x)+\alpha_B(a).
\end{equation}
The following is  proved in \cite{QH}.
\begin{thm}{\rm(\cite{QH})}\label{matched-pair-Hom-pre-Lie algebra}
With the above notations, $(A\oplus B,\diamond,\alpha_A\oplus \alpha_B)$ is a Hom-pre-Lie algebra if and only if $(A,B,l_A,r_A,l_B,r_B)$ is a matched pair of Hom-pre-Lie algebras.
\end{thm}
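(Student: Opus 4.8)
The plan is to verify directly the two defining axioms of a Hom-pre-Lie algebra---multiplicativity of $\alpha_d:=\alpha_A\oplus\alpha_B$ and the Hom-pre-Lie identity---for the triple $(A\oplus B,\diamond,\alpha_d)$. Since $\diamond$ is bilinear and both axioms are multilinear, it suffices to test them on homogeneous inputs, namely on elements lying entirely in $A$ or entirely in $B$. I would first record the partial products
$$u\diamond v=u\cdot v,\qquad c\diamond d=c\circ d,\qquad u\diamond c=r_B(c)u+l_A(u)c,\qquad c\diamond u=l_B(c)u+r_A(u)c$$
for $u,v\in A$ and $c,d\in B$, so that every subsequent expansion reduces to applications of $\cdot$, $\circ$ and the four structure maps, with the $A$-components of $u\diamond c$ and $c\diamond u$ being $r_B(c)u$ and $l_B(c)u$ and their $B$-components $l_A(u)c$ and $r_A(u)c$.

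First I would dispose of multiplicativity, $\alpha_d(X\diamond Y)=\alpha_d(X)\diamond\alpha_d(Y)$. On pure inputs it is the assumed multiplicativity of $\alpha_A$ and $\alpha_B$; on a mixed pair $u\in A$, $c\in B$ it splits, after projecting onto $A$ and $B$, into the four intertwining relations $\alpha_B\circ l_A(u)=l_A(\alpha_A u)\circ\alpha_B$, $\alpha_A\circ r_B(c)=r_B(\alpha_B c)\circ\alpha_A$, $\alpha_B\circ r_A(u)=r_A(\alpha_A u)\circ\alpha_B$ and $\alpha_A\circ l_B(c)=l_B(\alpha_B c)\circ\alpha_A$, which are exactly \eqref{hom-lie-rep-1} for $l_A,l_B$ and \eqref{rep-1} for $r_A,r_B$. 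Thus multiplicativity is equivalent to the intertwining part of the hypothesis that $(B,\alpha_B,l_A,r_A)$ and $(A,\alpha_A,l_B,r_B)$ are representations.

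Next I would read the Hom-pre-Lie identity as the requirement that the associator $\mathrm{as}(X,Y,Z):=(X\diamond Y)\diamond\alpha_d(Z)-\alpha_d(X)\diamond(Y\diamond Z)$ be symmetric in $X$ and $Y$. Because this swap is itself the symmetry imposed, the homogeneous inputs organize into six essentially distinct types, indexed by the unordered pair of types of $(X,Y)$ together with the type of $Z$. The two pure types $\{A,A\},Z\in A$ and $\{B,B\},Z\in B$ return precisely the Hom-pre-Lie identities of $A$ and of $B$, and so hold automatically. For $\{A,A\},Z\in B$, expanding $\mathrm{as}(x,y,c)=\mathrm{as}(y,x,c)$ and separating components yields the Hom-Lie representation relation \eqref{hom-lie-rep-2} for $l_A$ in the $B$-part and exactly the compatibility condition \eqref{pre-matche-pair-3} in the $A$-part; the mirror type $\{B,B\},Z\in A$ gives symmetrically \eqref{hom-lie-rep-2} for $l_B$ and \eqref{pre-matche-pair-1}. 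Finally, the two mixed types $\{A,B\}$ with $Z\in A$ and with $Z\in B$ deliver, upon projecting, the remaining representation relations \eqref{rep-2} for $r_A$ and $r_B$ together with the last compatibility conditions \eqref{pre-matche-pair-4} and \eqref{pre-matche-pair-2}. Reading all these equivalences forwards shows a matched pair yields a Hom-pre-Lie structure on $A\oplus B$, and reading them backwards shows the axioms force every representation and compatibility condition, giving the converse.

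The bookkeeping is where the effort concentrates, and I expect the two genuinely mixed types $\{A,B\}$ to be the main obstacle: there both $X\diamond Y$ and $Y\diamond Z$ already carry nonzero $A$- and $B$-components, so expanding $\mathrm{as}$ produces the full spread of $l$- and $r$-terms, and matching them against the asymmetric right-hand sides of \eqref{pre-matche-pair-2} and \eqref{pre-matche-pair-4}---which involve the combinations $l_A(x)-r_A(x)$ and $l_B(a)-r_B(a)$---demands careful tracking of signs and of which factor carries $\alpha_A$ or $\alpha_B$. The clean route is, for each such type, to fix which input lies in $A$, expand $\mathrm{as}(X,Y,Z)$ and $\mathrm{as}(Y,X,Z)$ term by term, and collect the $A$- and $B$-valued coefficients separately before comparing with the target identities.
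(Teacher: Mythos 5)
Your proposal is correct. Note that the paper itself gives no proof of this theorem; it is quoted from \cite{QH} with the remark ``The following is proved in \cite{QH},'' so there is nothing internal to compare against. Your argument is the standard direct verification, and the bookkeeping checks out: multiplicativity of $\alpha_A\oplus\alpha_B$ on mixed pairs yields exactly the intertwining conditions \eqref{hom-lie-rep-1} for $l_A,l_B$ and \eqref{rep-1} for $r_A,r_B$, and the six homogeneous types of the associator symmetry yield, respectively, the Hom-pre-Lie identities of $A$ and $B$, the pair $\bigl(\eqref{hom-lie-rep-2}\text{ for }l_A,\ \eqref{pre-matche-pair-3}\bigr)$, the pair $\bigl(\eqref{hom-lie-rep-2}\text{ for }l_B,\ \eqref{pre-matche-pair-1}\bigr)$, and the pairs $\bigl(\eqref{rep-2}\text{ for }r_A,\ \eqref{pre-matche-pair-4}\bigr)$ and $\bigl(\eqref{rep-2}\text{ for }r_B,\ \eqref{pre-matche-pair-2}\bigr)$, which together are precisely the definition of a matched pair.
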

\section{Hom-pre-Lie bialgebras}

In this section, we introduce the notions of Manin triple for Hom-pre-Lie algebras and Hom-pre-Lie bialgebra. We show that  Hom-pre-Lie bialgebras, standard Manin triples for Hom-pre-Lie algebras and certain matched pairs of Hom-pre-Lie algebras are equivalent.

Let $(V,\beta,\rho,\mu)$ be a representation of a Hom-pre-Lie algebra $(A,\cdot,\alpha)$. In the sequel, we always assume that $\beta$ is invertible. For all $x\in A,u\in V,\xi\in V^*$, define $\rho^*:A\longrightarrow\gl(V^*)$ and $\mu^*:A\longrightarrow\gl(V^*)$ as usual by
$$\langle \rho^*(x)(\xi),u\rangle=-\langle\xi,\rho(x)(u)\rangle,\quad \langle \mu^*(x)(\xi),u\rangle=-\langle\xi,\mu(x)(u)\rangle,\quad\forall x\in A,~\xi\in V^*,~u\in V.$$
Then define $\rho^\star:A\longrightarrow\gl(V^*)$ and $\mu^\star:A\longrightarrow\gl(V^*)$ by
\begin{eqnarray}
  \label{eq:1.3}\rho^\star(x)(\xi):=\rho^*(\alpha(x))\big{(}(\beta^{-2})^*(\xi)\big{)},\\
   \label{eq:1.4}\mu^\star(x)(\xi):=\mu^*(\alpha(x))\big{(}(\beta^{-2})^*(\xi)\big{)}.
\end{eqnarray}
\begin{thm}{\rm(\cite{LSS})}\label{dual-rep}
Let $(V,\beta,\rho,\mu)$ be a representation of a Hom-pre-Lie algebra $(A,\cdot,\alpha)$. Then $(V^*,(\beta^{-1})^*,\rho^\star-\mu^\star,-\mu^\star)$ is a representation of $(A,\cdot,\alpha)$, which is called the dual representation of $(V,\beta,\rho,\mu)$.
\end{thm}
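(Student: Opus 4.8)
The plan is to verify directly that the quadruple $(V^*,(\beta^{-1})^*,\rho^\star-\mu^\star,-\mu^\star)$ meets the three requirements of Definition \ref{defi:hom-pre representation}: that $\bar\rho:=\rho^\star-\mu^\star$ is a representation of the sub-adjacent Hom-Lie algebra $A^C$ on $V^*$ with respect to $\bar\beta:=(\beta^{-1})^*$, and that the pair $(\bar\rho,\bar\mu)$ with $\bar\mu:=-\mu^\star$ satisfies the analogues of \eqref{rep-1} and \eqref{rep-2}. The computational device throughout is to pair every operator identity on $V^*$ against an arbitrary $u\in V$ and transport it back to $V$: unwinding \eqref{eq:1.3} and \eqref{eq:1.4} gives the working formulas $\langle\rho^\star(x)\xi,u\rangle=-\langle\xi,\beta^{-2}\rho(\alpha(x))u\rangle$ and $\langle\mu^\star(x)\xi,u\rangle=-\langle\xi,\beta^{-2}\mu(\alpha(x))u\rangle$, together with $\langle(\beta^{-1})^*\xi,u\rangle=\langle\xi,\beta^{-1}u\rangle$ (here invertibility of $\beta$ is what makes $(\beta^{-1})^*$ and $(\beta^{-2})^*$ available). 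After this translation each required identity becomes an identity of operators on $V$, to be discharged using that $(V,\beta,\rho,\mu)$ is a representation.

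For the Hom-Lie part I would first isolate the clean sub-lemma that $\rho-\mu$ is itself a representation of $A^C$ on $V$ with respect to $\beta$. Indeed \eqref{hom-lie-rep-1} for $\rho-\mu$ is immediate from $\rho$ being a representation of $A^C$ together with \eqref{rep-1}, while \eqref{hom-lie-rep-2} for $\rho-\mu$ follows by expanding $(\rho-\mu)(\alpha(x))\circ(\rho-\mu)(y)-(\rho-\mu)(\alpha(y))\circ(\rho-\mu)(x)$: the four $\rho\rho$-terms reassemble into $\rho([x,y])\circ\beta$ by \eqref{hom-lie-rep-2} for $\rho$, and the remaining cross- and $\mu\mu$-terms collapse to $-\mu([x,y])\circ\beta$ after two applications of \eqref{rep-2} (once as stated and once with $x,y$ interchanged). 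Granting this, the first requirement is handled conceptually: the standard dual-representation construction for Hom-Lie algebras sends a representation $\sigma$ to $\sigma^\star$ with $\sigma^\star(x)\xi=\sigma^*(\alpha(x))((\beta^{-2})^*\xi)$, and since dualization is linear, applying it to $\sigma=\rho-\mu$ gives $(\rho-\mu)^\star=\rho^\star-\mu^\star=\bar\rho$; hence $\bar\rho$ is a representation of $A^C$ on $V^*$ with respect to $(\beta^{-1})^*$. (If one prefers a self-contained argument, \eqref{hom-lie-rep-1} and \eqref{hom-lie-rep-2} for $\bar\rho$ can instead be checked by the same pairing device used below.)

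The analogue of \eqref{rep-1} for $(\bar\rho,\bar\mu)$, namely $(\beta^{-1})^*\circ\mu^\star(x)=\mu^\star(\alpha(x))\circ(\beta^{-1})^*$, is a short warm-up: pairing both sides against $u$ and using the formulas above reduces it to $\beta\circ\mu(\alpha(x))=\mu(\alpha^2(x))\circ\beta$, which is exactly \eqref{rep-1} with $x$ replaced by $\alpha(x)$. The substance of the proof is the analogue of \eqref{rep-2}. Substituting $\bar\mu=-\mu^\star$ and $\bar\rho=\rho^\star-\mu^\star$ and cancelling the common term $\mu^\star(\alpha(y))\circ\mu^\star(x)$, the identity to prove becomes
\begin{equation}
\mu^\star(x\cdot y)\circ(\beta^{-1})^*=\rho^\star(\alpha(x))\circ\mu^\star(y)-\mu^\star(\alpha(x))\circ\mu^\star(y)-\mu^\star(\alpha(y))\circ\rho^\star(x).
\end{equation}
Pairing against $u$ turns each composite into an expression $\langle\xi,(\cdots)u\rangle$ in which $(\cdots)$ is a word in $\beta^{-2}$ and the operators $\rho(\alpha^k(\cdot))$, $\mu(\alpha^k(\cdot))$; the plan is to commute all powers of $\beta$ to one side using \eqref{rep-1} and \eqref{hom-lie-rep-1} for $\rho$ until the surviving operator identity on $V$ is recognizable as \eqref{rep-2} on suitable arguments.

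I expect this last step to be the main obstacle. The bookkeeping of the twisting maps is delicate: $\mu^\star$ and $\rho^\star$ each carry one $\alpha$ and one $(\beta^{-2})^*$, so the composites accumulate factors such as $\beta^{-2}\,\rho(\alpha(x))\,\beta^{-2}\,\mu(\alpha^2(y))$, and one must repeatedly invoke the equivariance relations $\beta\circ\rho(x)=\rho(\alpha(x))\circ\beta$ and $\beta\circ\mu(x)=\mu(\alpha(x))\circ\beta$ to pull the inner $\beta^{-2}$ outward before \eqref{rep-2} can be matched. Keeping the $\alpha$-weights and $\beta$-weights balanced on the two sides is where the care lies; once the weights are aligned, \eqref{rep-2} closes the computation.
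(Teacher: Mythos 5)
Your proposal is correct, but note that the paper itself offers no proof to compare against: Theorem \ref{dual-rep} is quoted from \cite{LSS} and used as a black box, so your argument has to stand on its own. It does. The sub-lemma that $\rho-\mu$ is a representation of $A^C$ on $V$ with respect to $\beta$ is right (the two applications of \eqref{rep-2}, once as stated and once with $x,y$ swapped, do collapse the cross- and $\mu\mu$-terms to $-\mu([x,y])\circ\beta$), and the appeal to the Hom-Lie dual-representation construction plus linearity of $\sigma\mapsto\sigma^\star$ legitimately disposes of the requirement that $\rho^\star-\mu^\star$ represent $A^C$ on $V^*$ with respect to $(\beta^{-1})^*$; this is exactly the construction the paper already relies on for its Corollary on $(A^*,(\alpha^{-1})^*,\ad^\star,-R^\star)$. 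The one step you flag as a potential obstacle in fact closes cleanly, and it is worth recording how: pairing your reduced identity against $u$ and using $\langle\mu^\star(x)\xi,u\rangle=-\langle\xi,\beta^{-2}\mu(\alpha(x))u\rangle$ together with the commutation rule $\mu(\alpha^k(y))\circ\beta^{-2}=\beta^{-2}\circ\mu(\alpha^{k+2}(y))$ (and its analogue for $\rho$), the three right-hand terms acquire a common prefactor $\beta^{-4}$ and become
\begin{equation*}
\beta^{-4}\bigl(\mu(\alpha^{3}(y))\rho(\alpha^{2}(x))-\mu(\alpha^{3}(y))\mu(\alpha^{2}(x))-\rho(\alpha^{3}(x))\mu(\alpha^{2}(y))\bigr),
\end{equation*}
while the left-hand side becomes $-\beta^{-4}\mu(\alpha^{2}(x)\cdot\alpha^{2}(y))\circ\beta$ after using $\alpha(x\cdot y)=\alpha(x)\cdot\alpha(y)$ and \eqref{rep-1} once more. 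Cancelling $\beta^{-4}$ and writing $X=\alpha^{2}(x)$, $Y=\alpha^{2}(y)$, the surviving identity is precisely \eqref{rep-2} evaluated at $X,Y$. So the $\alpha$- and $\beta$-weights balance exactly as you hoped, and no further input is needed.
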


\begin{cor}
 Let  $(A,\cdot,\alpha)$ be a Hom-pre-Lie algebra. Then $(A^*,(\alpha^{-1})^*,\ad^\star,-R^\star)$ is a representation of $(A,\cdot,\alpha)$, where $\ad=L-R$ is the adjoint representation of the sub-adjacent Hom-Lie algebra $A^C.$
\end{cor}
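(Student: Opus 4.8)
The plan is to obtain the asserted representation as the dual, in the sense of Theorem \ref{dual-rep}, of the regular representation. Recall that $(A,\alpha,L,R)$ is a representation of $(A,\cdot,\alpha)$ on the underlying space $A$ itself, as recorded after Definition \ref{defi:hom-pre representation}; here the twisting map is $\beta=\alpha$, the sub-adjacent Hom-Lie action is $\rho=L$, and the extra datum is $\mu=R$. Applying Theorem \ref{dual-rep} verbatim to this representation produces the dual representation $\big(A^*,(\alpha^{-1})^*,\,L^\star-R^\star,\,-R^\star\big)$ of $(A,\cdot,\alpha)$. The twist $(\beta^{-1})^*$ specializes to $(\alpha^{-1})^*$ and the second component $-\mu^\star$ specializes to $-R^\star$, so these two entries already agree with the statement and require nothing further.

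It then remains only to identify the first component $L^\star-R^\star$ with $\ad^\star$. Since the adjoint representation of the sub-adjacent Hom-Lie algebra $A^C$ is $\ad=L-R$, the required identity is $(L-R)^\star=L^\star-R^\star$. I would deduce this directly from the defining formula \eqref{eq:1.3}: for the fixed twisting map $\beta=\alpha$, the assignment $\rho\mapsto\rho^\star$, given by $\rho^\star(x)=\rho^*(\alpha(x))\circ(\beta^{-2})^*$, is a composition of operations each linear in $\rho$—reindexing $x\mapsto\alpha(x)$, passage to the dual $\rho\mapsto\rho^*$ determined by $\langle\rho^*(x)\xi,u\rangle=-\langle\xi,\rho(x)u\rangle$, and postcomposition with the fixed map $(\beta^{-2})^*$. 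Hence $\rho\mapsto\rho^\star$ is linear in its representation argument, and substituting $\ad=L-R$ gives $\ad^\star=(L-R)^\star=L^\star-R^\star$, completing the identification.

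The argument carries essentially no computational content: the two Hom-pre-Lie representation axioms \eqref{rep-1} and \eqref{rep-2} for the quadruple $(A^*,(\alpha^{-1})^*,\ad^\star,-R^\star)$ are already guaranteed by Theorem \ref{dual-rep} once $(A,\alpha,L,R)$ is recognized as a representation. The only point deserving explicit mention—and the one place a reader might pause—is the linearity of the $\star$-operation used to rewrite $L^\star-R^\star$ as $\ad^\star$; it is immediate from \eqref{eq:1.3}, but stating it ensures the substitution $\ad=L-R$ is justified rather than merely asserted.
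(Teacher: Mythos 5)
Your proposal is correct and follows exactly the route the paper intends: the corollary is stated without proof precisely because it is Theorem \ref{dual-rep} applied to the regular representation $(A,\alpha,L,R)$, yielding $(A^*,(\alpha^{-1})^*,L^\star-R^\star,-R^\star)$. Your explicit justification that $L^\star-R^\star=\ad^\star$ via linearity of the $\star$-operation in \eqref{eq:1.3} is the only nontrivial identification, and it is handled correctly.
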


In the sequel, when there is a Hom-pre-Lie algebra structure on $A^*$, we will use $\huaL,~\huaR$ and $\mathfrak{ad}:=\huaL-\huaR$ to denote the corresponding operations.

Before we introduce the notions of Manin triple and Hom-pre-Lie bialgebra, we give an important relation  between matched pairs of Hom-pre-Lie algebras and matched pairs of the associated sub-adjacent Hom-Lie algebras.

\begin{pro}\label{matched-pair-equivalent}
Let $(A,\cdot,\alpha)$ and $(A^\ast,\circ,(\alpha^{-1})^\ast)$ be Hom-pre-Lie algebras. Then $(A^C,(A^\ast)^C,L^\star,\huaL^\star)$ is a matched pair of Hom-Lie algebras if and only if $(A,A^\ast,\ad^\star,-R^\star,\mathfrak{ad}^\star,-\huaR^\star)$ is a matched pair of Hom-pre-Lie algebras.
\end{pro}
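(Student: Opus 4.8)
The plan is to reduce everything to the single comparison, on the space $A\oplus A^*$, between the Hom-pre-Lie product $\diamond$ of \eqref{Hom-pre-lie matched pair} and the Hom-Lie matched-pair bracket $[\cdot,\cdot]_d$, and to exploit the two theorems already available. First I would record the two linear identities that make the pieces fit together. Since the assignment $\rho\mapsto\rho^\star$ defined by \eqref{eq:1.3}--\eqref{eq:1.4} is linear in $\rho$, and $\ad=L-R$, $\mathfrak{ad}=\huaL-\huaR$, we obtain $L^\star=\ad^\star+R^\star$ and $\huaL^\star=\mathfrak{ad}^\star+\huaR^\star$. By the Corollary following Theorem~\ref{dual-rep} (applied to $A$ and, dually, to $A^*$), the pairs $(A^*,(\alpha^{-1})^*,\ad^\star,-R^\star)$ and $(A,\alpha,\mathfrak{ad}^\star,-\huaR^\star)$ are Hom-pre-Lie representations; the two displayed identities then exhibit $L^\star$ and $\huaL^\star$ as the associated \emph{sub-adjacent} Hom-Lie representations. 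Hence the representation clauses in the definition of a matched pair of Hom-Lie algebras are automatic, and only the compatibility conditions \eqref{matche-pair-1}--\eqref{matche-pair-2} remain to be matched against \eqref{pre-matche-pair-1}--\eqref{pre-matche-pair-4}.

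Second, using the same two identities I would compute the commutator of $\diamond$ and verify that $(x+\xi)\diamond(y+\eta)-(y+\eta)\diamond(x+\xi)$ equals $[(x,\xi),(y,\eta)]_d$, with the common twisting map $\alpha\oplus(\alpha^{-1})^*=\phi_d$; the cross terms reassemble precisely because $\ad^\star+R^\star=L^\star$ and $\mathfrak{ad}^\star+\huaR^\star=\huaL^\star$. Thus the sub-adjacent Hom-Lie algebra of $(A\oplus A^*,\diamond,\phi_d)$ is exactly $(A\oplus A^*,[\cdot,\cdot]_d,\phi_d)$. This settles one direction at once: if $(A,A^*,\ad^\star,-R^\star,\mathfrak{ad}^\star,-\huaR^\star)$ is a matched pair of Hom-pre-Lie algebras, then by Theorem~\ref{matched-pair-Hom-pre-Lie algebra} the product $\diamond$ is a Hom-pre-Lie algebra, so its commutator $[\cdot,\cdot]_d$ is a Hom-Lie algebra, which, by the matched-pair theorem for Hom-Lie algebras \cite{sheng1}, is equivalent to saying that $(A^C,(A^*)^C,L^\star,\huaL^\star)$ is a matched pair of Hom-Lie algebras.

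For the converse I cannot simply reverse the last step, because the commutator of $\diamond$ being a Hom-Lie algebra is strictly weaker than $\diamond$ being Hom-pre-Lie. Instead I would verify \eqref{pre-matche-pair-1}--\eqref{pre-matche-pair-4} directly. By multilinearity the Hom-pre-Lie identity for $\diamond$ need only be checked on homogeneous arguments; the all-$A$ and all-$A^*$ cases hold because $A$ and $A^*$ are Hom-pre-Lie subalgebras of $(A\oplus A^*,\diamond)$, and the remaining mixed cases, after pairing with a test element of $A$ or $A^*$, are exactly the scalar forms of \eqref{pre-matche-pair-1}--\eqref{pre-matche-pair-4}. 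Each such scalar identity I would expand through the explicit formula $\rho^\star(x)(\xi)=\rho^*(\alpha(x))\big{(}(\beta^{-2})^*\xi\big{)}$ for the four coadjoint-type operators, and then reduce it to \eqref{matche-pair-1}--\eqref{matche-pair-2} together with the left-symmetry axioms of $\cdot$ and $\circ$ and the representation relations \eqref{rep-1}--\eqref{rep-2}.

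The main obstacle is precisely here. The antisymmetric conditions \eqref{pre-matche-pair-1} and \eqref{pre-matche-pair-3} are recovered from the Hom-Lie conditions by antisymmetrization, but the genuinely pre-Lie conditions \eqref{pre-matche-pair-2} and \eqref{pre-matche-pair-4} are invisible to the commutator and must be produced by hand: their antisymmetric parts coincide with the bracket conditions already known, while their symmetric parts carry the extra content. The crux is that these symmetric parts are supplied by the left-symmetry of $\cdot$ and $\circ$ themselves, \emph{provided} the representations have the specific coadjoint form $(\ad^\star,-R^\star)$ and $(\mathfrak{ad}^\star,-\huaR^\star)$; it is this dual structure, rather than a general pair of representations, that closes the gap. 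Tracking the $(\beta^{-2})^*$-twists (here the powers $\alpha^{\pm2}$) through these manipulations and selecting the correct linear combinations of the paired identities is the only genuinely delicate bookkeeping. Once organized, each of \eqref{pre-matche-pair-1}--\eqref{pre-matche-pair-4} follows, and Theorem~\ref{matched-pair-Hom-pre-Lie algebra} converts the conclusion back into the statement that $(A,A^*,\ad^\star,-R^\star,\mathfrak{ad}^\star,-\huaR^\star)$ is a matched pair of Hom-pre-Lie algebras.
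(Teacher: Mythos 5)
Your forward direction is sound and genuinely different from the paper: you check that the commutator of the product $\diamond$ of \eqref{Hom-pre-lie standard-matched pair} is exactly the bracket $[\cdot,\cdot]_d$ built from $(L^\star,\huaL^\star)$ (which is correct, since $L^\star=\ad^\star+R^\star$ and $\huaL^\star=\mathfrak{ad}^\star+\huaR^\star$), so Theorem~\ref{matched-pair-Hom-pre-Lie algebra} plus the Hom-Lie matched-pair theorem gives ``Hom-pre-Lie matched pair $\Rightarrow$ Hom-Lie matched pair'' cleanly. The problem is the converse, which you correctly identify as the substantive half and then do not actually prove. Your plan is to recover \eqref{pre-matche-pair-1}--\eqref{pre-matche-pair-4} by splitting each into an antisymmetric part (claimed to come from \eqref{matche-pair-1}--\eqref{matche-pair-2}) and a symmetric part (claimed to be ``supplied by the left-symmetry of $\cdot$ and $\circ$''). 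That mechanism is not the one that makes the statement true, and it is not clear it can be made to work: the symmetric part of, say, \eqref{pre-matche-pair-4} in $x,y$ is a constraint on $\mathfrak{ad}^\star_{(\alpha^{-1})^*(\xi)}(x\cdot y+y\cdot x)$, and the Hom-pre-Lie axiom for $\cdot$ says nothing directly about how $\mathfrak{ad}^\star$ acts on the symmetrized product; to extract anything you must transpose through the pairing using the explicit coadjoint formulas, at which point the antisymmetric/symmetric splitting plays no role. So the crux of the proposition is asserted (``once organized, each of \eqref{pre-matche-pair-1}--\eqref{pre-matche-pair-4} follows'') rather than established.

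What the paper actually does is sharper and bypasses the double entirely: pairing each identity against a test element and pushing all operators across the duality pairing via \eqref{eq:1.3}--\eqref{eq:1.4}, it shows that \emph{each single} Hom-pre-Lie compatibility condition is equivalent to \emph{one} Hom-Lie compatibility condition --- concretely $\eqref{matche-pair-2}\Leftrightarrow\eqref{pre-matche-pair-1}\Leftrightarrow\eqref{pre-matche-pair-4}$ and $\eqref{matche-pair-1}\Leftrightarrow\eqref{pre-matche-pair-2}\Leftrightarrow\eqref{pre-matche-pair-3}$. These equivalences are pure adjoint-transposition: they use only the definitions of $L^\star,R^\star,\ad^\star,\huaL^\star,\huaR^\star,\mathfrak{ad}^\star$, the identity $R_zy=L_yz$, and multiplicativity of $\alpha$; the Hom-pre-Lie identities of $A$ and $A^*$ are never invoked. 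In particular there is no ``extra content'' in \eqref{pre-matche-pair-2} and \eqref{pre-matche-pair-4} beyond the Lie-level conditions once the representations have this coadjoint form --- which is exactly the point your argument needs and does not supply. To repair your proof, replace the antisymmetric/symmetric analysis in the converse by these four direct transposition computations (or at least two of them, the other two following by the symmetric roles of $A$ and $A^*$).
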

\begin{proof}
We denote the bracket of the sub-adjacent Hom-pre-Lie algebra $(A^*)^C$ by $\{\cdot,\cdot\}$. For all $x,y\in A, \xi,\eta\in A^\ast$, we have
\begin{eqnarray}\label{matched-1}
\nonumber &&\langle R^\star_{\alpha(\alpha^{-2}(x))}\{\xi,\eta\}-R^\star_{\mathfrak{ad}^\star_\eta\alpha^{-2}(x)}(\alpha^{-1})^\ast(\xi)+R^\star_{\mathfrak{ad}^\star_\xi\alpha^{-2}(x)}(\alpha^{-1})^\ast(\eta)-(\alpha^{-1})^\ast(\xi)\circ R^\star_{\alpha^{-2}(x)}\eta\\
\nonumber &&+(\alpha^{-1})^\ast(\eta)\circ R^\star_{\alpha^{-2}(x)}\xi,\alpha^2(y)\rangle\\
\nonumber&=&-\langle\{\xi,\eta\},R_{\alpha^{-2}(x)}y\rangle+\langle (\alpha^{-1})^\ast(\xi),R_{\alpha^{-1}(\mathfrak{ad}^\star_\eta\alpha^{-2}(x))}(y)\rangle-\langle (\alpha^{-1})^\ast(\eta),R_{\alpha^{-1}(\mathfrak{ad}^\star_\xi\alpha^{-2}(x))}(y)\rangle\\
\nonumber &&-\langle \huaL_{(\alpha^{-1})^\ast(\xi)}R^\star_{\alpha^{-2}(x)}\eta,\alpha^2(y)\rangle+\langle \huaL_{(\alpha^{-1})^\ast(\eta)}R^\star_{\alpha^{-2}(x)}\xi,\alpha^2(y)\rangle\\
\nonumber &=&\langle-\{\xi,\eta\},L_y\alpha^{-2}(x)\rangle+\langle(\alpha^{-1})^\ast(\xi),L_y\alpha^{-1}(\mathfrak{ad}^\star_\eta\alpha^{-2}(x))\rangle-\langle(\alpha^{-1})^\ast(\eta),L_y\alpha^{-1}(\mathfrak{ad}^\star_\xi\alpha^{-2}(x))\rangle\\
\nonumber &&+\langle R^\star_{\alpha^{-2}(x)}\eta,\huaL^\ast_{(\alpha^{-1})^\ast(\xi)}\alpha^2(y)\rangle-\langle R^\star_{\alpha^{-2}(x)}\xi,\huaL^\ast_{(\alpha^{-1})^\ast(\eta)}\alpha^2(y)\rangle\\
\nonumber  &=&\langle L^\star_{\alpha(y)}\{\xi,\eta\},x\rangle-\langle(\alpha^{-1})^\ast L^\ast_y(\alpha^{-1})^\ast(\xi),\mathfrak{ad}^\star_\eta\alpha^{-2}(x)\rangle+\langle(\alpha^{-1})^\ast L^\ast_y(\alpha^{-1})^\ast(\eta),\mathfrak{ad}^\star_\xi\alpha^{-2}(x)\rangle\\
\nonumber &&-\langle\eta,R_{\alpha^{-3}(x)}\alpha^{-2}(\huaL^\ast_{(\alpha^{-1})^\ast(\xi)}\alpha^2(y))\rangle+\langle\xi,R_{\alpha^{-3}(x)}\alpha^{-2}(\huaL^\ast_{(\alpha^{-1})^\ast(\eta)}\alpha^2(y))\rangle\\
\nonumber&=&\langle L^\star_{\alpha(y)}\{\xi,\eta\},x\rangle+\langle \mathfrak{ad}_{\alpha^\ast(\eta)}\alpha^\ast (L^\ast_y(\alpha^{-1})^\ast(\xi)),\alpha^{-2}(x)\rangle
-\langle \mathfrak{ad}_{\alpha^\ast(\xi)}\alpha^\ast (L^\ast_y(\alpha^{-1})^\ast(\eta)),\alpha^{-2}(x)\rangle\\
\nonumber &&+\langle L^\ast_{\alpha^{-2}(\huaL^\star_\xi y)}\eta,\alpha^{-3}(x)\rangle-\langle L^\ast_{\alpha^{-2}(\huaL^\star_\eta y)}\xi,\alpha^{-3}(x)\rangle\\
\nonumber &=&\langle L^\star_{\alpha(y)}\{\xi,\eta\},x\rangle+\langle \{\eta,L^\star_{\alpha^{-1}(y)}\alpha^\ast(\xi)\},\alpha^{-1}(x)\rangle-\langle \{\xi,L^\star_{\alpha^{-1}(y)}\alpha^\ast(\eta)\},\alpha^{-1}(x)\rangle\\
\nonumber &&+\langle L^\star_{\alpha^{-3}(\huaL^\star_\xi y)}(\alpha^2)^\ast(\eta),\alpha^{-3}(x)\rangle-\langle L^\star_{\alpha^{-3}(\huaL^\star_\eta y)}(\alpha^2)^\ast(\xi),\alpha^{-3}(x)\rangle\\
\nonumber &=&\langle L^\star_{\alpha(y)}\{\xi,\eta\},x\rangle+\langle \{\eta,\alpha^\ast (L^\star_y \xi)\},\alpha^{-1}(x)\rangle-\langle \{\xi,\alpha^\ast (L^\star_y \eta)\},\alpha^{-1}(x)\rangle+\langle L^\star_{\huaL^\star_\xi y}(\alpha^{-1})^\ast(\eta),x\rangle\\
\nonumber &&-\langle L^\star_{\huaL^\star_\eta y}(\alpha^{-1})^\ast(\xi),x\rangle\\
\nonumber&=&\langle L^\star_{\alpha(y)}\{\xi,\eta\}+\{(\alpha^{-1})^\ast(\eta),L^\star_y \xi\}-\{(\alpha^{-1})^\ast(\xi),L^\star_y \eta)\}+L^\star_{\huaL^\star_\xi y}(\alpha^{-1})^\ast(\eta)-L^\star_{\huaL^\star_\eta y}(\alpha^{-1})^\ast(\xi),x\rangle,
\end{eqnarray}
which implies that $\eqref{matche-pair-2}\Longleftrightarrow \eqref{pre-matche-pair-1}$. We also have
\begin{eqnarray} 
\nonumber &&\langle-\mathfrak{ad}^\star_{(\alpha^{-1})^\ast(\xi)}(x\cdot y)-\mathfrak{ad}^\star_{L^\star_x \xi}\alpha(y)+\huaL^\star_{\xi}x \cdot\alpha(y)+\huaR^\star_{R^\star_y\xi}\alpha(x)+\alpha(x)\cdot \mathfrak{ad}^\star_\xi y,(\alpha^{-2})^\ast(\eta)\rangle\\
\nonumber&=&\langle x\cdot y,\mathfrak{ad}_\xi \eta\rangle+\langle \alpha(y),\mathfrak{ad}_{\alpha^\ast(L^\star_x\xi)}\eta\rangle-\langle\alpha(y),L^\ast_{\huaL^\star_\xi x}(\alpha^{-2})^\ast(\eta)\rangle-\langle \alpha(x),\huaR_{\alpha^\ast(R^\star_y\xi)}\eta\rangle\\
\nonumber &&-\langle \mathfrak{ad}^\star_\xi y,L^\ast_{\alpha(x)}(\alpha^{-2})^\ast(\eta)\rangle\\
\nonumber&=&\langle L_x y,\{\xi,\eta\}\rangle+\langle \alpha(y),\{\alpha^\ast(L^\star_x \xi),\eta\}\rangle-\langle \alpha(y),L^\star_{\alpha^{-1}(\huaL^\star_\xi x)}\eta\rangle-\langle\alpha(x),\huaL_\eta\alpha^\ast(R^\star_y \xi) \rangle-\langle \mathfrak{ad}^\star_\xi y,L^\star_x \eta\rangle\\
\nonumber &=&-\langle \alpha^2(y),L^\star _{\alpha(x)}\{\xi,\eta\}\rangle+\langle \alpha^2(y),\{L^\star_x \xi,(\alpha^{-1})^\ast(\eta)\}\rangle-\langle \alpha^2(y),(\alpha^{-1})^\ast( L^\star_{\alpha^{-1}(\huaL^\star_\xi x)}\eta)\rangle\\
\nonumber &&-\langle R_{\alpha^{-1}(y)}\alpha^{-1}(\huaL^\ast_\eta \alpha(x)),\xi\rangle+\langle y,\mathfrak{ad}_{\alpha^\ast(\xi)}(\alpha^2)^\ast(L^\star_x \eta)\rangle\\
\nonumber &=&-\langle \alpha^2(y),L^\star _{\alpha(x)}\{\xi,\eta\}\rangle+\langle \alpha^2(y),\{L^\star_x \xi,(\alpha^{-1})^\ast(\eta)\}\rangle-\langle \alpha^2(y),L^\star_{\huaL^\star_\xi x}(\alpha^{-1})^\ast(\eta)\rangle\\
\nonumber&& -\langle \alpha^2(\huaL^\ast_\eta \alpha(x))\cdot\alpha^2(y),(\alpha^{-3})^\ast(\xi)\rangle+\langle \alpha^2(y),(\alpha^{-2})^\ast \mathfrak{ad}_{\alpha^\ast(\xi)}(\alpha^2)^\ast(L^\star_x \eta)\rangle\\
\nonumber &=&-\langle \alpha^2(y),L^\star_{\alpha(x)}\{\xi,\eta\}\rangle+\langle \alpha^2(y),\{L^\star_x \xi,(\alpha^{-1})^\ast(\eta)\}\rangle-\langle \alpha^2(y),L^\star_{\huaL^\star_\xi x}(\alpha^{-1})^\ast(\eta)\rangle\\
\nonumber &&-\langle L_{\alpha(\huaL^\star_\eta x)}\alpha^2(y),(\alpha^{-3})^\ast(\xi)\rangle+\langle \alpha^2(y),\{(\alpha^{-1})^\ast(\xi),L^\star_x \eta\}\rangle\\
\nonumber&=&\langle \alpha^2(y),-L^\star _{\alpha(x)}\{\xi,\eta\}+\{L^\star_x \xi,(\alpha^{-1})^\ast(\eta)\}-L^\star_{\huaL^\star_\xi x}(\alpha^{-1})^\ast(\eta)+L^\star_{\huaL^\star_\eta x}(\alpha^{-1})^\ast(\xi)+\{(\alpha^{-1})^\ast(\xi),L^\star_x \eta\}\rangle,
\end{eqnarray}
 which implies that $\eqref{matche-pair-2}\Longleftrightarrow \eqref{pre-matche-pair-4}$.

 Thus, we have $\eqref{matche-pair-2}\Longleftrightarrow\eqref{pre-matche-pair-1} \Longleftrightarrow \eqref{pre-matche-pair-4}$.
 Similarly, we have $\eqref{matche-pair-1}\Longleftrightarrow\eqref{pre-matche-pair-2} \Longleftrightarrow \eqref{pre-matche-pair-3}$. This finishes the proof.
\end{proof}

Now we introduce the notion of quadratic Hom-pre-Lie algebra and the notion of  Manin triple.
\begin{defi}\label{invariant}
A {\bf quadratic Hom-pre-Lie algebra} is a Hom-pre-Lie algebra $(A,\cdot,\alpha)$ equipped with a nondegenerate skew-symmetric bilinear form $\omega\in \wedge^2 A^\ast$ such that for all $x,y,z\in A$, the following invariant conditions hold:
\begin{eqnarray}
\label{invariant-1}\omega(\alpha(x),\alpha(y))&=&\omega(x,y),\\
\label{invariant-2}\omega(x\cdot y,\alpha(z))&=&-\omega(\alpha(y),[x,z]).
\end{eqnarray}
\end{defi}
We denote a quadratic Hom-pre-Lie algebra by $(A,\cdot,\alpha,\omega)$.
\begin{defi}
A {\bf Manin triple} for Hom-pre-Lie algebras is a triple $(\huaA,A_1,A_2)$ in which $(\huaA,\cdot,\alpha,\omega)$ is a quadratic Hom-pre-Lie algebra, $(A_1,\cdot_1,\alpha_1)$ and $(A_2,\cdot_2,\alpha_2)$ are isotropic Hom-pre-Lie sub-algebras of $\huaA$ such that
\begin{itemize}
   \item[$\rm(i)$]  $\huaA=A_1\oplus A_2$ as vector spaces,
  \item[$\rm(ii)$] $\alpha=\alpha_1\oplus \alpha_2.$
\end{itemize}
\end{defi}
Two Manin triples $(\huaA,A_1,A_2)$ and $(\huaB,B_1,B_2)$ with the bilinear forms $\omega_1$ and $\omega_2$ respectively are isomorphic if there exists an isomorphism of Hom-pre-Lie algebras $f:\huaA\longrightarrow \huaB$ such that
\begin{equation}
f(A_1)=B_1, \quad f(A_2)=B_2,\quad \omega_1(x,y)=\omega_2(f(x),f(y)),\quad\forall x,y \in \huaA.
\end{equation}
Let $(A,\cdot,\alpha)$ and $(A^\ast,\circ,(\alpha^{-1})^\ast)$ be two Hom-pre-Lie algebras, for all $x,y\in A, \xi,\eta\in A^*,$ define a bilinear operator $\diamond:\otimes^2(A\oplus A^*)\lon(A\oplus A^*)$ by
\begin{equation}\label{Hom-pre-lie standard-matched pair}
(x+\xi)\diamond(y+\eta):=x\cdot y+\mathfrak{ad}^\star(\xi)y-\huaR^\star(\eta)x+\xi\circ \eta+\ad^\star(x)\eta-R^\star(y)\xi.
\end{equation}
 If $(A\oplus A^*,\diamond,\alpha\oplus (\alpha^{-1})^*)$ is a Hom-pre-Lie algebra, such that $(A,\cdot,\alpha)$ and $(A^\ast,\circ,(\alpha^{-1})^\ast)$ are Hom-pre-Lie subalgebras, by computation, the natural nondegenerate skew-symmetric bilinear form $\bar{\omega}$ on $A\oplus A^\ast$ given by
\begin{equation}\label{standard manin triple}
\bar{\omega}(x+\xi,y+\eta)=\langle \xi,y \rangle-\langle \eta,x \rangle,
\end{equation}
is invariant. Consequently,  $(A\oplus A^\ast,A,A^\ast)$ is a Manin triple, which is called the {\bf standard Manin triple}.
\begin{pro}\label{Manin triple isomorphic}
Every Manin triple is isomorphic to the standard Manin triple.
\end{pro}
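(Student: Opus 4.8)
The plan is to reconstruct the two dual Hom-pre-Lie algebras directly from the geometry of $\omega$ and then exhibit an explicit isomorphism onto a standard model. Let $(\huaA,\cdot,\alpha,\omega)$ be the quadratic Hom-pre-Lie algebra underlying a Manin triple $(\huaA,A_1,A_2)$. Since $\omega$ is nondegenerate, $A_1$ and $A_2$ are isotropic and $\huaA=A_1\oplus A_2$, the form $\omega$ restricts to a nondegenerate pairing between $A_1$ and $A_2$; hence the assignment $\langle \varphi(a),x\rangle:=\omega(a,x)$ for $a\in A_2$, $x\in A_1$ defines a linear isomorphism $\varphi\colon A_2\to A_1^\ast$. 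I then set $A:=A_1$ with its subalgebra structure $(\cdot,\alpha_1)$ and transport the structure of $A_2$ to $A^\ast=A_1^\ast$ along $\varphi$, writing $\xi\circ\eta:=\varphi\big(\varphi^{-1}(\xi)\cdot\varphi^{-1}(\eta)\big)$. To see that the transported twisting map is exactly $(\alpha_1^{-1})^\ast$, as the standard construction requires, I would use the invariance \eqref{invariant-1}: for $a\in A_2$, $x\in A_1$ one has $\langle(\alpha_1^{-1})^\ast\varphi(a),x\rangle=\omega(a,\alpha_1^{-1}(x))=\omega(\alpha_2(a),x)=\langle\varphi(\alpha_2(a)),x\rangle$, so $\varphi\circ\alpha_2=(\alpha_1^{-1})^\ast\circ\varphi$.

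Next I would define $f\colon\huaA\to A\oplus A^\ast$ by $f(x+a)=x+\varphi(a)$ for $x\in A_1$, $a\in A_2$, which is visibly a linear isomorphism sending $A_1$ onto $A$ and $A_2$ onto $A^\ast$. Two of the required properties are immediate. It intertwines the twisting maps, $f\circ\alpha=(\alpha_1\oplus(\alpha_1^{-1})^\ast)\circ f$, by the relation just established. It also pulls the standard form \eqref{standard manin triple} back to $\omega$: using isotropy and skew-symmetry, $\bar{\omega}(f(x+a),f(y+b))=\langle\varphi(a),y\rangle-\langle\varphi(b),x\rangle=\omega(a,y)+\omega(x,b)=\omega(x+a,y+b)$.

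The main work is to show that $f$ is a morphism of Hom-pre-Lie algebras, i.e. that the product of $\huaA$ is carried to the product $\diamond$ of \eqref{Hom-pre-lie standard-matched pair}. Since $A_1$ and $A_2$ are subalgebras, the products $x\cdot y\in A_1$ and $a\cdot b\in A_2$ are accounted for by the first and fourth terms of \eqref{Hom-pre-lie standard-matched pair}, so the statement reduces to reading off the $A_1$- and $A_2$-components of the mixed products $x\cdot a$ and $a\cdot x$ and matching them with the coadjoint-type actions. Concretely, I would verify
\begin{align}
(x\cdot a)_{A_1}&=-\huaR^\star(\varphi(a))x, & \varphi\big((x\cdot a)_{A_2}\big)&=\ad^\star(x)\varphi(a),\\
(a\cdot x)_{A_1}&=\mathfrak{ad}^\star(\varphi(a))x, & \varphi\big((a\cdot x)_{A_2}\big)&=-R^\star(x)\varphi(a),
\end{align}
for all $x\in A_1$, $a\in A_2$. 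For instance, for the second identity, pairing against $z\in A_1$ and using isotropy gives $\langle\varphi((x\cdot a)_{A_2}),z\rangle=\omega(x\cdot a,z)$; applying \eqref{invariant-2} with $\alpha(w)=z$ followed by \eqref{invariant-1} rewrites this as $-\omega(a,[\alpha_1^{-1}(x),\alpha_1^{-2}(z)])$, which is exactly $\langle\ad^\star(x)\varphi(a),z\rangle$ after unwinding the definition \eqref{eq:1.3} of $\ad^\star$. The remaining three identities follow the same pattern, using \eqref{eq:1.4} for the operations built from right multiplications $R$ and $\huaR$.

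The hardest part is precisely this bookkeeping: the Hom twists inject factors $\alpha^{\pm1}$ and $(\beta^{-2})^\ast$ into the definitions \eqref{eq:1.3}--\eqref{eq:1.4}, so each identity requires transporting $\alpha$ through $\omega$ by repeated use of \eqref{invariant-1} and carefully decomposing brackets and products along $\huaA=A_1\oplus A_2$. Once the four identities are established, $f$ is an isomorphism of quadratic Hom-pre-Lie algebras with $f(A_1)=A$ and $f(A_2)=A^\ast$, so $(\huaA,A_1,A_2)$ is isomorphic to the standard Manin triple $(A\oplus A^\ast,A,A^\ast)$. Alternatively, one may organize the same content through Theorem \ref{matched-pair-Hom-pre-Lie algebra} and Proposition \ref{matched-pair-equivalent}: the decomposition $\huaA=A_1\oplus A_2$ into subalgebras is a matched pair of Hom-pre-Lie algebras, and invariance of $\omega$ forces its four actions to be $\ad^\star,-R^\star,\mathfrak{ad}^\star,-\huaR^\star$ under the identification $A_2\cong A_1^\ast$, which yields the standard product $\diamond$.
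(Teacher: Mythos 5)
Your proposal is correct and follows essentially the same route as the paper: you build the linear isomorphism $f(x+a)=x+\omega(a,\cdot)$, use \eqref{invariant-1} to see that the transported twist is $(\alpha_1^{-1})^\ast$ and that $f$ carries $\omega$ to $\bar{\omega}$, and then use \eqref{invariant-2} together with the definitions \eqref{eq:1.3}--\eqref{eq:1.4} to identify the mixed products with $\ad^\star,-R^\star,\mathfrak{ad}^\star,-\huaR^\star$, i.e.\ with the standard product $\diamond$. The paper phrases this as computing the induced product on $A_1\oplus A_1^\ast$ via $\bar{\omega}$-pairings rather than reading off components inside $\huaA$, but the computations are the same.
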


\begin{proof}
Let $(\huaA,A_1,A_2)$ be a Manin triple with a nondegenerate skew-symmetric invariant bilinear form $\omega$. For all $x\in A_1, u\in A_2$, define a linear map $f:\huaA\longrightarrow A_1\oplus A_1^*$ by $f(x,u)=(x,\omega(u,\cdot))$. Since $\omega$ is nondegenerate, $f$ is an isomorphism between vector spaces. Thus, $f$ induces a Manin triples structure on $(A_1\oplus A_1^*,A_1,A_1^*)$.

First for all $x,y\in A, u,v\in A_2$, we have
\begin{eqnarray}
\nonumber \bar{\omega}( f(x+u),f(y+v))
=\bar{\omega}(x+\omega(u,\cdot),y+\omega(v,\cdot))=
 \omega(u,y)-\omega(v,x)=\omega(x+u,y+v),
\end{eqnarray}
which implies that the induced bilinear form on $A_1\oplus A_1^* $ is exactly $\bar{\omega}$ given by \eqref{standard manin triple}.

Then we assume the induced Hom-pre-Lie algebra structure on  $A_1\oplus A_1^\ast$ is given by $(A_1\oplus A_1^*,\cdot',\alpha_{A_1}\oplus \alpha_{A_1^*})$. By \eqref{invariant-1}, we obtain $\alpha_{A_1^\ast}=(\alpha_{A_1}^{-1})^\ast$. For all $x,y\in A, \xi,\eta\in A^\ast$, we have
\begin{eqnarray}\label{equivalent-3-4}
\nonumber  \bar{\omega}(x \cdot' \xi,\alpha(y))&=&-\bar{\omega}((\alpha^{-1})^\ast(\xi),[x,y])\\
\nonumber &=&-\langle (\alpha^{-2})^\ast(\xi),L_\alpha(x) \alpha(y)-R_\alpha(x) \alpha(y)\rangle\\
\nonumber &=&\langle L^\ast_{\alpha(x)} (\alpha^{-2})^\ast(\xi)-R^\ast_{\alpha(x)} (\alpha^{-2})^\ast(\xi),\alpha(y)\rangle\\
\nonumber &=& \langle L^\star_x \xi-R^\star_x \xi,\alpha(y)\rangle\\
&=&\bar{\omega}(\ad^\star_x \xi,\alpha(y)),
\end{eqnarray}
and
\begin{eqnarray}\label{equivalent-3-5}
\nonumber  \bar{\omega}(x\cdot'\xi,(\alpha^{-1})^\ast(\eta))&=&\bar{\omega}((\alpha^{-1})^\ast(\xi),[\eta,x])\\
\nonumber &=&-\bar{\omega}(\eta\circ \xi,\alpha(x))\\
\nonumber &=&-\langle \huaR_{(\alpha^{-1})^\ast(\xi)} (\alpha^{-1})^\ast(\eta),\alpha^2(x)\rangle\\
\nonumber &=&\langle (\alpha^{-1})^\ast(\eta),\huaR^\ast_{(\alpha^{-1})^\ast(\xi)} \alpha^2(x)\rangle\\
\nonumber &=& \langle (\alpha^{-1})^\ast(\eta),\huaR^\star_\xi x\rangle\\
&=&-\bar{\omega}(\huaR^\star_\xi x,(\alpha^{-1})^\ast(\eta)).
\end{eqnarray}
Thus, we have $x\cdot'\xi=\ad^\star_x \xi-\huaR^\star_\xi x$. Similarly, we have $\xi\cdot' x=\mathfrak{ad}^\star_\xi x-R^\star_x \xi$. Thus, we  deduce that $(x+\xi)\cdot'(y+\eta)=(x+\xi)\diamond(y+\eta)$, which implies that  $(A_1\oplus A_1^\ast,A_1,A_1^\ast)$ is the standard Manin triple.
\end{proof}
For a Hom-Lie algebra $(\g,[\cdot,\cdot]_\g,\phi_\g)$ and a representation $(V,\beta,\rho)$, recall that a $1$-cocycle $\delta$ associated to $(V,\beta,\rho)$ is a linear map form $\g$
to $V$ satisfying:
\begin{equation}
\delta([x,y]_\g)=\rho(\phi_\g(x))\delta(y)-\rho(\phi_\g(y))\delta(x).
\end{equation}
\begin{defi}
A pair of Hom-pre-Lie algebras $(A,\cdot,\alpha)$ and $(A^\ast,\circ,(\alpha^{-1})^\ast)$   is called a {\bf Hom-pre-Lie bialgebra} if the following conditions hold:
\begin{itemize}
   \item[$\rm(i)$]  $\varphi^\ast$ is a $1$-cocycle of the sub-adjacent Hom-Lie algebra  $A^C$ associated to the representation $(A\otimes A,L^{-2}\otimes\alpha+\alpha \otimes \ad^{-2})$, where $\varphi^\ast:A\longrightarrow A\otimes A$ is the dual of $\circ:A^\ast\otimes A^\ast\longrightarrow A^\ast$, i.e. $\langle \varphi^\ast(x),\xi\otimes\eta\rangle=\langle x,\xi\circ \eta\rangle$.
  \item[$\rm(ii)$] $\psi^\ast$ is a $1$-cocycle of the sub-adjacent Hom-Lie algebra $(A^\ast)^C$ associated to the representation $(A^*\otimes A^*,\huaL^{-2}\otimes(\alpha^{-1})^\ast+(\alpha^{-1})^\ast\otimes \mathfrak{ad}^{-2})$, where $\psi^\ast:A^\ast\longrightarrow A^\ast\otimes A^\ast$ is the dual of $\cdot:A\otimes A\longrightarrow A^\ast$, i.e. $\langle \psi^\ast(\xi),x\otimes y\rangle=\langle \xi,x\cdot y\rangle$.
\end{itemize}
We denote a Hom-pre-Lie bialgebra by $(A,A^\ast,\varphi^*,\psi^*)$ or simply $(A,A^\ast)$.
\end{defi}

Now we are ready to give the main result of this section.
\begin{thm}\label{equivalent}
Let $(A,\cdot,\alpha)$ and $(A^\ast,\circ,(\alpha^{-1})^\ast)$ be two Hom-pre-Lie algebras. Then the following conditions are equivalent:
\begin{itemize}
\item [$\rm(i)$]  $(A,A^\ast)$ is a Hom-pre-Lie bialgebra,
\item[$\rm(ii)$] $(A,A^\ast,\ad^\star,-R^\star,\mathfrak{ad}^\star,-\huaR^\star)$ is a matched pair of Hom-pre-Lie algebras,
\item[$\rm(iii)$] $(A\oplus A^\ast,A,A^\ast)$ is the standard Manin triple for Hom-pre-Lie algebras.
\end{itemize}
\end{thm}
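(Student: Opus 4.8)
The plan is to establish the two equivalences (ii)$\Leftrightarrow$(iii) and (i)$\Leftrightarrow$(ii) separately, which together yield that all three statements are equivalent. The first equivalence is essentially a matter of recognizing the standard Manin triple product as a matched pair product and invoking Theorem \ref{matched-pair-Hom-pre-Lie algebra}; the second carries the real content and is obtained by reducing (ii) to a matched pair of Hom-Lie algebras via Proposition \ref{matched-pair-equivalent}, and then matching the two cocycle conditions of the bialgebra against the two Hom-Lie matched pair axioms by a direct dualization.

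For (ii)$\Leftrightarrow$(iii), I would first observe that the operation $\diamond$ of \eqref{Hom-pre-lie standard-matched pair} defining the standard Manin triple is exactly the matched pair operation \eqref{Hom-pre-lie matched pair} specialized to $l_A=\ad^\star$, $r_A=-R^\star$, $l_B=\mathfrak{ad}^\star$, $r_B=-\huaR^\star$, with structure map $\alpha\oplus(\alpha^{-1})^\ast$. By the corollary following Theorem \ref{dual-rep}, $(A^\ast,(\alpha^{-1})^\ast,\ad^\star,-R^\star)$ is a genuine representation of $(A,\cdot,\alpha)$ and, applying the same corollary to $A^\ast$, $(A,\alpha,\mathfrak{ad}^\star,-\huaR^\star)$ is a representation of $(A^\ast,\circ,(\alpha^{-1})^\ast)$; hence Theorem \ref{matched-pair-Hom-pre-Lie algebra} applies verbatim: $(A\oplus A^\ast,\diamond,\alpha\oplus(\alpha^{-1})^\ast)$ is a Hom-pre-Lie algebra if and only if the above data form a matched pair, i.e. if and only if (ii) holds. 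Finally, the discussion preceding Proposition \ref{Manin triple isomorphic} shows that whenever $A\oplus A^\ast$ is a Hom-pre-Lie algebra containing $A$ and $A^\ast$ as subalgebras, the canonical form $\bar\omega$ of \eqref{standard manin triple} is automatically invariant; thus being such a Hom-pre-Lie algebra is the same as being the standard Manin triple, and (ii)$\Leftrightarrow$(iii) follows.

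For (i)$\Leftrightarrow$(ii), I would use Proposition \ref{matched-pair-equivalent} to replace (ii) by the equivalent statement that $(A^C,(A^\ast)^C,L^\star,\huaL^\star)$ is a matched pair of Hom-Lie algebras, which by definition amounts to the two compatibilities \eqref{matche-pair-1} and \eqref{matche-pair-2}. It then suffices to show that each cocycle condition of the bialgebra is equivalent to one of these. For the condition on $\psi^\ast$, I would pair the cocycle identity $\psi^\ast(\{\xi,\eta\})=\rho((\alpha^{-1})^\ast\xi)\psi^\ast(\eta)-\rho((\alpha^{-1})^\ast\eta)\psi^\ast(\xi)$, where $\rho=\huaL^{-2}\otimes(\alpha^{-1})^\ast+(\alpha^{-1})^\ast\otimes\mathfrak{ad}^{-2}$, against an arbitrary $x\otimes y\in A\otimes A$; using $\langle\psi^\ast(\zeta),x\otimes y\rangle=\langle\zeta,x\cdot y\rangle$, the defining formulas \eqref{eq:1.3}--\eqref{eq:1.4} of the star-representations, and the adjointness of $L,R,\huaL,\huaR$ with their transposes, this identity rearranges into precisely \eqref{matche-pair-2}. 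Dually, pairing the $\varphi^\ast$-cocycle against $\xi\otimes\eta\in A^\ast\otimes A^\ast$ and using $\langle\varphi^\ast(z),\xi\otimes\eta\rangle=\langle z,\xi\circ\eta\rangle$ yields \eqref{matche-pair-1}. Since (i) is exactly the conjunction of these two cocycle conditions, it is equivalent to \eqref{matche-pair-1} and \eqref{matche-pair-2} holding simultaneously, hence to (ii).

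The main obstacle is the cocycle computation of the previous paragraph. The tensor representations carry the shifted twists $\huaL^{-2}$ and $\mathfrak{ad}^{-2}$, and the star-duals $\rho^\star,\mu^\star$ contain the extra factor $(\beta^{-2})^\ast$ from \eqref{eq:1.3}--\eqref{eq:1.4}, so the bookkeeping of the powers of $\alpha$, and of the distinction between the plain transpose $(-)^\ast$ and the twisted dual $(-)^\star$, is delicate; it must be organized exactly as in the long chain of identities proving Proposition \ref{matched-pair-equivalent}, transporting all the twists $\alpha^{\pm 2}$ to one side before comparing. Once the twists are placed correctly, each summand of the cocycle identity matches a summand of the corresponding matched pair axiom, the skew-symmetry of $[\cdot,\cdot]$ and $\{\cdot,\cdot\}$ being precisely what antisymmetrizes the two halves of each tensor-product representation into the right-hand sides of \eqref{matche-pair-1}--\eqref{matche-pair-2}.
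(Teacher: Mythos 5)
Your proposal follows essentially the same route as the paper: the equivalence (ii)$\Leftrightarrow$(iii) via Theorem \ref{matched-pair-Hom-pre-Lie algebra} together with the invariance check for $\bar\omega$, and (i)$\Leftrightarrow$(ii) by dualizing the two $1$-cocycle conditions and pivoting through Proposition \ref{matched-pair-equivalent} (the paper targets the Hom-pre-Lie conditions \eqref{pre-matche-pair-2}--\eqref{pre-matche-pair-3} and \eqref{pre-matche-pair-1}--\eqref{pre-matche-pair-4} as the intermediate, whereas you target \eqref{matche-pair-1}--\eqref{matche-pair-2}, but by that proposition these are interchangeable). The outline is correct, with the expected caveat that the delicate bookkeeping of the twists $\alpha^{\pm2}$ in the dualization is asserted rather than carried out.
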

\begin{proof}
First, we prove that $\rm(i)$ is equivalent to $\rm(ii)$. We have
\begin{eqnarray}\label{cocycle-1}
\nonumber && \langle-\ad^\star_{\alpha(x)}(\xi\circ\eta)-\ad^\star_{\huaL^\star_\xi x}(\alpha^{-1})^\ast(\eta)+L^\star_x\xi\circ (\alpha^{-1})^\ast(\eta)+R^\star(\huaR^\star_\eta x)(\alpha^{-1})^\ast(\xi)\\
\nonumber&&+(\alpha^{-1})^\ast(\xi)\circ\ad^\star_x \eta,\alpha^2(y)\rangle\\
\nonumber &=&\langle[x,y],\xi\circ\eta \rangle+\langle \ad_{\alpha^{-1}(\huaL^\star_\xi x)}y,(\alpha^{-1})^\ast(\eta)\rangle+\langle (\alpha^2)^*(L^\star_x \xi)\circ \alpha^*(\eta),y\rangle\\
\nonumber&&-\langle R_{\alpha^{-1}(\huaR^\star_\eta x)}y,(\alpha^{-1})^\ast(\xi)\rangle+\langle \alpha^\ast(\xi)\circ (\alpha^2)^*(\ad^\star_x \eta),y\rangle\\
\nonumber &=&\langle\varphi^\ast[x,y],\xi\otimes \eta\rangle-\langle\ad_y \alpha^{-1}(\huaL^\star_\xi x),(\alpha^{-1})^\ast(\eta)\rangle+\langle L^\star_{\alpha^{-2}(x)}(\alpha^2)^*(\xi)\circ \alpha^*(\eta),y  \rangle\\
\nonumber&&-\langle L_y\alpha^{-1}(\huaR^\star_\eta x),(\alpha^{-1})^\ast(\xi)\rangle+\langle \alpha^\ast(\xi)\circ \ad^\star_{\alpha^{-2}(x)}(\alpha^2)^*(\eta),y \rangle\\
\nonumber &=&\langle \varphi^\ast[x,y],\xi\otimes \eta\rangle+\langle\alpha^{-1}(\huaL^\star_\xi x),\ad^*_y (\alpha^{-1})^\ast(\eta)\rangle+\langle L^*_{\alpha^{-1}(x)}\xi\circ \alpha^*(\eta),y  \rangle\\
\nonumber&&+\langle \alpha^{-1}(\huaR^\star_\eta x),L^*_y(\alpha^{-1})^\ast(\xi) \rangle +\langle \alpha^\ast(\xi)\circ \ad^*_{\alpha^{-1}(x)}\eta,y  \rangle\\
\nonumber &=&\langle \varphi^\ast[x,y],\xi\otimes \eta \rangle+\langle \huaL^\star_\xi x,(\alpha^{-1})^\ast\ad^\star_{\alpha^{-1}(y)}\alpha^\ast(\eta) \rangle+\langle (L^{-2})^*_{\alpha(x)}\xi\circ \alpha^*(\eta),y\rangle\\
\nonumber&&+\langle \huaR^\star_\eta x,(\alpha^{-1})^\ast L^\star_{\alpha^{-1}(y)}\alpha^\ast(\xi)\rangle+\langle \alpha^\ast(\xi)\circ (\ad^{-2})^*_{\alpha(x)}\eta,y \rangle\\
\nonumber &=&\langle \varphi^\ast[x,y],\xi\otimes \eta \rangle+\langle  \huaL^\star_\xi x,\ad^\star_y \eta \rangle-\langle(L_{\alpha(x)}^{-2}\otimes \alpha)\varphi^*(y),\xi\otimes \eta  \rangle+\langle\huaR^\star_\eta x, L^\star_y \xi \rangle\\
\nonumber&&-\langle (\alpha\otimes\ad_{\alpha(x)}^{-2})\varphi^*(y),\xi\otimes \eta\rangle\\
\nonumber &=&\langle \varphi^\ast[x,y],\xi\otimes \eta \rangle-\langle  x,\alpha^*(\xi)\circ (\alpha^2)^*(\ad^\star_y \eta) \rangle-\langle(L_{\alpha(x)}^{-2}\otimes \alpha)\varphi^*(y),\xi\otimes \eta  \rangle\\
\nonumber&&-\langle x, (\alpha^2)^*(L^\star_y \xi)\circ \alpha^*(\eta)\rangle-\langle (\alpha\otimes\ad_{\alpha(x)}^{-2})\varphi^*(y),\xi\otimes \eta\rangle\\
\nonumber &=&\langle \varphi^\ast[x,y],\xi\otimes \eta \rangle-\langle  x,\alpha^*(\xi)\circ \ad^\star_{\alpha^{-2}(y)}(\alpha^2)^*(\eta) \rangle-\langle(L_{\alpha(x)}^{-2}\otimes \alpha)\varphi^*(y),\xi\otimes \eta  \rangle\\
\nonumber&&-\langle x, L^\star_{\alpha^{-2}(y)}(\alpha^2)^*(\xi)\circ \alpha^*(\eta)\rangle-\langle (\alpha\otimes\ad_{\alpha(x)}^{-2})\varphi^*(y),\xi\otimes \eta\rangle\\
\nonumber &=&\langle \varphi^\ast[x,y],\xi\otimes \eta \rangle-\langle  x,\alpha^*(\xi)\circ \ad^*_{\alpha^{-1}(y)}\eta \rangle-\langle(L_{\alpha(x)}^{-2}\otimes \alpha)\varphi^*(y),\xi\otimes \eta  \rangle\\
\nonumber&&-\langle x, L^*_{\alpha^{-1}(y)}\xi\circ \alpha^*(\eta)\rangle-\langle (\alpha\otimes\ad_{\alpha(x)}^{-2})\varphi^*(y),\xi\otimes \eta\rangle\\
\nonumber &=&\langle \varphi^\ast[x,y],\xi\otimes \eta \rangle-\langle  x,\alpha^*(\xi)\circ (\ad^{-2})_{\alpha(y)}^*\eta \rangle-\langle(L_{\alpha(x)}^{-2}\otimes \alpha)\varphi^*(y),\xi\otimes \eta  \rangle\\
\nonumber&&-\langle x, (L^{-2})_{\alpha(y)}^*\xi\circ \alpha^*(\eta)\rangle-\langle (\alpha\otimes\ad_{\alpha(x)}^{-2})\varphi^*(y),\xi\otimes \eta\rangle\\
\nonumber &=&\langle \varphi^\ast[x,y],\xi\otimes \eta \rangle+\langle (\alpha\otimes \ad_{\alpha(y)}^{-2}) \varphi^*(x),\xi\otimes \eta\rangle-\langle(L_{\alpha(x)}^{-2}\otimes \alpha)\varphi^*(y),\xi\otimes \eta  \rangle\\
\nonumber&&+\langle(L_{\alpha(y)}^{-2})\otimes\alpha)\varphi^* x, \xi\otimes \eta\rangle-\langle (\alpha\otimes\ad_{\alpha(x)}^{-2})\varphi^*(y),\xi\otimes \eta\rangle\\
&=& \langle \varphi^\ast[x,y]-(L^{-2}_{\alpha(x)}\otimes\alpha+\alpha\otimes \ad^{-2}_{\alpha(x)})\varphi^\ast(y)+(L^{-2}_{\alpha(y)}\otimes\alpha+\alpha\otimes \ad^{-2}_{\alpha(y)})\varphi^\ast(x),\xi\otimes\eta\rangle,
\end{eqnarray}
which means that \eqref{pre-matche-pair-2} holds if and only if $\varphi^\ast$ is a $1$-cocycle of the sub-adjacent Hom-Lie algebra $A^C$ associated to the representation $(A\otimes A,L^{-2}\otimes\alpha+\alpha \otimes \ad^{-2})$. By Proposition \ref{matched-pair-equivalent}, we can obtain that $\eqref{pre-matche-pair-2} \Longleftrightarrow \eqref{pre-matche-pair-3}$. Therefore, $\varphi^\ast$ is a $1$-cocycle of the sub-adjacent Hom-Lie algebra  $A^C$ associated to the representation $(A\otimes A,L^{-2}\otimes\alpha+\alpha \otimes \ad^{-2})$ if and only if $\eqref{pre-matche-pair-2}$ and $\eqref{pre-matche-pair-3}$ hold. Similarly, we can prove that $\psi^\ast$ is a 1-cocycle of the sub-adjacent Hom-Lie algebra $(A^\ast)^C$ associated to the representation $(A^*\otimes A^*,\huaL^{-2}\otimes(\alpha^{-1})^\ast+(\alpha^{-1})^\ast\otimes \mathfrak{ad}^{-2})$ if and only if $\eqref{pre-matche-pair-1}$ and $\eqref{pre-matche-pair-4}$ hold. Thus, we obtain that condition $\rm(i)$ is equivalent to condition $\rm(ii)$.

Next, we prove that $\rm(ii)$ is equivalent to $\rm(iii)$.
Let $(A,A^\ast,\ad^\star,-R^\star,\mathfrak{ad}^\star,-\huaR^\star)$ be a matched pair of Hom-pre-Lie algebras. By Theorem \ref{matched-pair-Hom-pre-Lie algebra}, $(A\oplus A^\ast,\diamond,\alpha\oplus (\alpha^{-1})^*)$ is a Hom-pre-Lie algebra, where $``\diamond$'' is given by \eqref{Hom-pre-lie standard-matched pair}. Let $\omega$ be the natural nondegenerate skew-symmetric bilinear form  on $A\oplus A^*$ given by \eqref{standard manin triple}. We only need to prove that $\omega$ satisfies the invariant conditions. For all $x,y,z\in A, \xi,\eta,\gamma\in A^\ast$, we have
\begin{eqnarray}\label{equivalent-3-1}
\nonumber \omega(\alpha(x)+(\alpha^{-1})^\ast(\xi),\alpha(y)+(\alpha^{-1})^\ast(\eta))
\nonumber&=&\langle (\alpha^{-1})^\ast(\xi),\alpha(y)\rangle-\langle (\alpha^{-1})^\ast(\eta),\alpha(x)\rangle\\
\nonumber&=&\langle\xi,y\rangle-\langle\eta,x\rangle\\
 &=&\omega(x+\xi,y+\eta),
\end{eqnarray}
which implies that \eqref{invariant-1} holds. Moreover, we have
\begin{eqnarray}\label{equivalent-3-2}
\nonumber && \omega((x+\xi)\diamond(y+\eta),\alpha(z)+(\alpha^{-1})^\ast(\gamma))\\
\nonumber &=&\omega(x\cdot y+\mathfrak{ad}^\star_\xi y-\huaR^\star_\eta x+\xi\circ \eta+\ad^\star_x \eta-R^\star_y \xi,\alpha(z)+(\alpha^{-1})^\ast(\gamma))\\
\nonumber &=&\langle \xi\circ \eta+\ad^\star_x \eta-R^\star_y \xi,\alpha(z)\rangle-\langle(\alpha^{-1})^\ast(\gamma), x\cdot y+\mathfrak{ad}^\star_\xi y-\huaR^\star_\eta x\rangle\\
\nonumber &=&\langle\alpha^\ast(\xi)\circ \alpha^\ast(\eta),z \rangle-\langle \eta,\alpha^{-1}(x)\cdot \alpha^{-1}(z)-\alpha^{-1}(z)\cdot \alpha^{-1}(x)\rangle+\langle \xi,\alpha^{-1}(z)\cdot \alpha^{-1}(y)\rangle\\
&&-\langle \gamma,\alpha^{-1}(x)\cdot \alpha^{-1}(y)\rangle+\langle y,\alpha^\ast(\xi)\circ \alpha^\ast(\gamma)\rangle-\alpha^\ast(\gamma)\circ \alpha^\ast(\xi)\rangle-\langle x,\alpha^\ast(\gamma)\circ \alpha^\ast(\eta)\rangle,
\end{eqnarray}
and
\begin{eqnarray}\label{equivalent-3-3}
\nonumber && -\omega(\alpha(y)+(\alpha^{-1})^\ast(\eta),[x+\xi,z+\gamma])\\
\nonumber &=&-\omega(\alpha(y)+(\alpha^{-1})^\ast(\eta),x\cdot z+\mathfrak{ad}^\star_\xi z-\huaR^\star_\gamma x+\xi\circ \gamma+\ad^\star_x \gamma-R^\star_z \xi)\\
\nonumber&&-\omega(\alpha(y)+(\alpha^{-1})^\ast(\eta),-z\cdot x-\mathfrak{ad}^\star_\gamma x+\huaR^\star_\xi z-\gamma\circ \xi-\ad^\star_z \xi+R^\star_x \gamma)\\
\nonumber &=&-\langle (\alpha^{-1})^\ast(\eta),x\cdot z+\mathfrak{ad}^\star_\xi z-\huaR^\star_\gamma x-z\cdot x-\mathfrak{ad}^\star_\gamma x+\huaR^\star_\xi z\rangle\\
\nonumber&&+\langle\xi\circ \gamma+\ad^\star_x \gamma-R^\star_z \xi-\gamma\circ \xi-\ad^\star_z \xi+R^\star_x \gamma,\alpha(y)\rangle\\
\nonumber &=&-\langle\eta,\alpha^{-1}(x)\cdot \alpha^{-1}(z) \rangle+\langle z,\alpha^*(\xi)\circ\alpha^*(\eta)-\alpha^*(\eta)\circ\alpha^*(\xi)\rangle-\langle x,\alpha^*(\eta)\circ\alpha^*(\gamma)\rangle\\
\nonumber&& +\langle\eta,\alpha^{-1}(z)\cdot \alpha^{-1}(x) \rangle-\langle x,\alpha^*(\gamma)\circ\alpha^*(\eta)-\alpha^*(\eta)\circ\alpha^*(\gamma)\rangle+\langle z,\alpha^*(\eta)\circ\alpha^*(\xi)\rangle \\
\nonumber && +\langle \alpha^\ast(\xi)\circ \alpha^\ast(\gamma),y\rangle-\langle \gamma,\alpha^{-1}(x)\cdot\alpha^{-1}(y)-\alpha^{-1}(y)\cdot\alpha^{-1}(x)\rangle+\langle \xi, \alpha^{-1}(y)\cdot \alpha^{-1}(z)\rangle \\
\nonumber&& -\langle \alpha^\ast(\gamma)\circ \alpha^\ast(\xi),y\rangle+\langle \xi,\alpha^{-1}(z)\cdot\alpha^{-1}(y)-\alpha^{-1}(y)\cdot\alpha^{-1}(z)\rangle-\langle \gamma,\alpha^{-1}(y)\cdot \alpha^{-1}(x)\rangle \\
\nonumber &=& \langle\alpha^\ast(\xi)\circ \alpha^\ast(\eta),z \rangle-\langle \eta,\alpha^{-1}(x)\cdot \alpha^{-1}(z)-\alpha^{-1}(z)\cdot \alpha^{-1}(x)\rangle+\langle \xi,\alpha^{-1}(z)\cdot \alpha^{-1}(y)\rangle\\
&&-\langle \gamma,\alpha^{-1}(x)\cdot \alpha^{-1}(y)\rangle+\langle y,\alpha^\ast(\xi)\circ \alpha^\ast(\gamma)\rangle-\alpha^\ast(\gamma)\circ \alpha^\ast(\xi)\rangle-\langle x,\alpha^\ast(\gamma)\circ \alpha^\ast(\eta)\rangle,
\end{eqnarray}
which implies that \eqref{invariant-2} holds. Thus, $(A\oplus A^\ast,A,A^\ast)$ is a standard Manin triple.
Conversely, if $(A\oplus A^\ast,A,A^\ast)$ is the standard Manin triple, then it is obvious that $(A,A^\ast,\ad^\star,-R^\star,\mathfrak{ad}^\star,-\huaR^\star)$ is a matched pair of Hom-pre-Lie algebras.
\end{proof}

\section{Coboundary Hom-pre-Lie bialgebras}
In this section, we study coboundary Hom-pre-Lie bialgebras and introduce the notion of a Hom-$\frks$-matrix, which gives rise to a Hom-pre-Lie bialgebra naturally.

\begin{defi}
A Hom-pre-Lie bialgebra $(A,A^\ast,\varphi^\ast,\psi^\ast)$ is called {\bf coboundary} if $\varphi^\ast$ is a $1$-coboundary of the sub-adjacent Hom-Lie algebra $A^C$ associated to the representation $(A\otimes A,L^{-2}\otimes\alpha+\alpha \otimes \ad^{-2})$, that is, there exists an $r\in A\otimes A$ such that
\begin{equation}\label{coboundary}
\varphi^\ast(x)=(L_x^{-2}\otimes\alpha+\alpha \otimes \ad_x^{-2})r,\quad \forall x \in A.
\end{equation}
\end{defi}
Let $(A,\cdot,\alpha)$ be a Hom-pre-Lie algebra, and $r\in A\otimes A$, suppose that $\varphi^\ast$ is a $1$-coboundary of the sub-adjacent Hom-Lie algebra  $A^C$ associated to the representation $(A\otimes A,L^{-2}\otimes\alpha+\alpha \otimes \ad^{-2})$. Then it is obvious that $\varphi^\ast$ is a $1$-cocycle of the sub-adjacent Hom-Lie algebra  $A^C$ associated to the representation $(A\otimes A,L^{-2}\otimes\alpha+\alpha \otimes \ad^{-2})$. \begin{pro}\label{pro:condition}
With the above notations,  $(A,A^\ast,\varphi^*,\psi^*)$ is a Hom-pre-Lie bialgebra if and only if the following two conditions are satisfied:
\begin{itemize}
\item [$\rm(i)$] $\circ:A^\ast\otimes A^\ast\longrightarrow A^\ast$ define a Hom-pre-Lie algebra structure on $A^\ast$, where $``\circ$'' is given by $\langle \varphi^\ast(x),\xi\otimes\eta\rangle=\langle x,\xi\circ \eta\rangle$.
\item[$\rm(ii)$] $\psi^\ast$ is a $1$-cocycle of the sub-adjacent Hom-Lie algebra $(A^\ast)^C$ associated to the representation $(A^*\otimes A^*,\huaL^{-2}\otimes(\alpha^{-1})^\ast+(\alpha^{-1})^\ast\otimes \mathfrak{ad}^{-2})$, where $\psi^\ast:A^\ast\longrightarrow A^\ast\otimes A^\ast$ is given by $\langle \psi^\ast(\xi),x\otimes y\rangle=\langle \xi,x\cdot y\rangle$.
\end{itemize}
\end{pro}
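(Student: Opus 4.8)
The plan is to unpack the definition of a Hom-pre-Lie bialgebra and to observe that, under the coboundary hypothesis, one of its ingredients becomes automatic. Recall that the definition asks that $(A,\cdot,\alpha)$ and $(A^\ast,\circ,(\alpha^{-1})^\ast)$ both be Hom-pre-Lie algebras, that $\varphi^\ast$ be a $1$-cocycle of $A^C$ associated to $(A\otimes A,L^{-2}\otimes\alpha+\alpha\otimes\ad^{-2})$, and that $\psi^\ast$ be a $1$-cocycle of $(A^\ast)^C$ associated to $(A^\ast\otimes A^\ast,\huaL^{-2}\otimes(\alpha^{-1})^\ast+(\alpha^{-1})^\ast\otimes\mathfrak{ad}^{-2})$. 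In the present setting $A$ is a Hom-pre-Lie algebra by hypothesis, and the operation $\circ$ on $A^\ast$ is the one dual to the fixed coboundary $\varphi^\ast$ via $\langle\varphi^\ast(x),\xi\otimes\eta\rangle=\langle x,\xi\circ\eta\rangle$. Thus only three clauses of the definition remain to be discussed: that $\varphi^\ast$ be a cocycle, that $(A^\ast,\circ,(\alpha^{-1})^\ast)$ be a Hom-pre-Lie algebra, and that $\psi^\ast$ be a cocycle.

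For the forward implication I would read the conditions off directly from the definition. If $(A,A^\ast,\varphi^\ast,\psi^\ast)$ is a Hom-pre-Lie bialgebra, then $(A^\ast,\circ,(\alpha^{-1})^\ast)$ is by assumption a Hom-pre-Lie algebra, which is precisely $\rm(i)$, and $\psi^\ast$ satisfies the cocycle condition appearing in the definition, which is precisely $\rm(ii)$.

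For the converse, the single point to invoke is the remark placed immediately before the statement: since $\varphi^\ast$ is assumed to be a $1$-coboundary of $A^C$ associated to $(A\otimes A,L^{-2}\otimes\alpha+\alpha\otimes\ad^{-2})$, it is in particular a $1$-cocycle for that representation, as every coboundary is a cocycle. Hence the first clause of the bialgebra definition holds automatically. Assumption $\rm(ii)$ supplies the cocycle condition on $\psi^\ast$, and assumption $\rm(i)$ guarantees that $A^\ast$ carries a genuine Hom-pre-Lie structure, so that $(A,\cdot,\alpha)$ and $(A^\ast,\circ,(\alpha^{-1})^\ast)$ form a pair of Hom-pre-Lie algebras. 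All clauses of the definition are then met, and $(A,A^\ast,\varphi^\ast,\psi^\ast)$ is a Hom-pre-Lie bialgebra.

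I do not anticipate a genuine obstacle: the content of the proposition is the bookkeeping observation that passing to the coboundary setting trivializes the cocycle requirement on $\varphi^\ast$, leaving exactly the two stated conditions. The only subtlety to keep in mind is that a Hom-pre-Lie bialgebra presupposes that $A^\ast$ itself is a Hom-pre-Lie algebra, and this is not forced by $\varphi^\ast$ being a coboundary --- which is exactly why condition $\rm(i)$ must be imposed separately rather than derived.
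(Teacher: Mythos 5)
Your proposal is correct and matches the paper's reasoning: the paper gives no separate proof of this proposition, relying exactly on the preceding remark that a $1$-coboundary is automatically a $1$-cocycle, so that the bialgebra definition reduces to the two stated conditions. Your additional observation that condition $\rm(i)$ cannot be derived from the coboundary hypothesis (and that condition $\rm(ii)$ only makes sense once $\rm(i)$ provides the Hom-pre-Lie structure on $A^\ast$) is the right bookkeeping point.
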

For all $r\in A\otimes A$, the linear map $r^\sharp:A^*\longrightarrow A$ is defined by
 \begin{equation}
\langle r^\sharp(\xi),\eta\rangle=\langle r,\xi\otimes \eta\rangle,\quad \forall \xi,\eta \in A^*.
\end{equation}
\begin{pro}\label{O-Operator}
Let $(A,\cdot,\alpha)$ be a Hom-pre-Lie algebra and $\varphi^*:A\longrightarrow A\otimes A$ defined by \eqref{coboundary}. If $r\in A\otimes A$ satisfies
\begin{equation}\label{pro-1}
r^\sharp\circ (\alpha^{-1})^*=\alpha\circ r^\sharp.
\end{equation}
Then, for all $\xi, \eta\in A^*$, we have
\begin{equation}\label{pro-2}
\xi\circ\eta=\ad_{r^\sharp(\xi)}^\star\eta-R_{{\sigma(r)}^\sharp(\eta)}^\star\xi=\ad_{\alpha({r^\sharp}(\xi))}^*((\alpha^{-2})^*(\eta))-R_{\alpha({{\sigma(r)}^\sharp(\eta)})}^*((\alpha^{-2})^*(\xi)),
\end{equation}
where $\sigma:A\otimes A\longrightarrow A\otimes A$ is the flip operator defined by $\sigma(x\otimes y)=y\otimes x$ for all $x,y\in A$. Furthermore, we have
\begin{equation}\label{pro-3}
r^\sharp(\alpha^*(\xi))\cdot r^\sharp(\alpha^*(\eta))-r^\sharp(\alpha^*(\xi\circ \eta))=[[r,r]](\xi,\eta),
\end{equation}
where $[[r,r]]=[r_{12},r_{23}]-r_{13}\cdot r_{12}+r_{13}\cdot r_{23}$.
\end{pro}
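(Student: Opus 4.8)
The plan is to prove the two displayed identities in turn, treating \eqref{pro-2} first (whose two equalities are of quite different nature) and then deriving \eqref{pro-3} from it. The second equality in \eqref{pro-2} is purely a matter of unwinding definitions: by the corollary following Theorem~\ref{dual-rep}, $(A^\ast,(\alpha^{-1})^\ast,\ad^\star,-R^\star)$ is the dual of the regular representation, which has $\beta=\alpha$, so \eqref{eq:1.3} gives $\ad^\star_z\eta=\ad^\ast(\alpha(z))((\alpha^{-2})^\ast(\eta))$ and $R^\star_z\xi=R^\ast(\alpha(z))((\alpha^{-2})^\ast(\xi))$ for every $z\in A$. Substituting $z=r^\sharp(\xi)$ and $z=\sigma(r)^\sharp(\eta)$ respectively reproduces the right-hand expression verbatim.

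For the first equality, I would write $r=\sum_i a_i\otimes b_i$ and test against an arbitrary $x\in A$. On one side, $\langle x,\xi\circ\eta\rangle=\langle\varphi^\ast(x),\xi\otimes\eta\rangle=\langle(L^{-2}_x\otimes\alpha+\alpha\otimes\ad^{-2}_x)r,\xi\otimes\eta\rangle$ by \eqref{coboundary} and the defining relation of $\circ$; expanding with $L^{-2}_x a=\alpha^{-2}(x)\cdot a$ and $\ad^{-2}_x b=\alpha^{-2}(x)\cdot b-b\cdot\alpha^{-2}(x)$ turns this into a sum of scalar pairings in $a_i,b_i,\xi,\eta,x$. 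On the other side I would expand $\langle x,\ad^\star_{r^\sharp(\xi)}\eta\rangle$ and $\langle x,R^\star_{\sigma(r)^\sharp(\eta)}\xi\rangle$ using the formulas just recalled together with $r^\sharp(\xi)=\sum_i\langle a_i,\xi\rangle b_i$ and $\sigma(r)^\sharp(\eta)=\sum_i\langle b_i,\eta\rangle a_i$. Matching the two sides requires moving $\alpha$ across $r^\sharp$, and this is exactly where hypothesis \eqref{pro-1} enters, in the convenient equivalent form $r^\sharp(\alpha^\ast(\xi))=\alpha^{-1}(r^\sharp(\xi))$; combined with the morphism property $\alpha(u\cdot v)=\alpha(u)\cdot\alpha(v)$ the $\alpha$-powers line up and the two expansions coincide for all $x$, giving \eqref{pro-2}.

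Finally, for \eqref{pro-3} I would start from the left-hand side, substitute the expression \eqref{pro-2} for $\xi\circ\eta$ into $r^\sharp(\alpha^\ast(\xi\circ\eta))$, and again use \eqref{pro-1} to push the twists through $r^\sharp$. Introducing a second independent copy $r=\sum_j a_j\otimes b_j$, the product $r^\sharp(\alpha^\ast(\xi))\cdot r^\sharp(\alpha^\ast(\eta))$ and the two contributions coming from $\ad^\star$ and $R^\star$ all become double sums over $i,j$ of terms of the shape $\langle a_i,\xi\rangle\langle a_j,\eta\rangle\,w_{ij}$ (and permutations thereof) with $w_{ij}\in A$ built from products and brackets of the $a$'s and $b$'s. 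The remaining task is to recognize these three double sums as the pairings $(\xi,\eta)$ of $[r_{12},r_{23}]$, $-r_{13}\cdot r_{12}$ and $r_{13}\cdot r_{23}$, viewing each as an element of $A\otimes A\otimes A$ and contracting the two covector legs against $\xi$ and $\eta$, which is by definition $[[r,r]](\xi,\eta)$. I expect this last bookkeeping to be the main obstacle: one must fix the tensor-leg conventions for $r_{12},r_{13},r_{23}$ (including the precise placement of the $\alpha$-twists, which is why the left-hand side carries $\alpha^\ast$) and keep every twist consistent throughout, and it is the Hom-pre-Lie identity together with the morphism property of $\alpha$ that guarantees the twisted terms assemble into exactly the three advertised terms with no leftover.
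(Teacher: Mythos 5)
Your plan follows the paper's own proof essentially step for step: both identities are established by pairing against an arbitrary element ($x\in A$ for \eqref{pro-2}, $\theta\in A^*$ for \eqref{pro-3}), expanding $r=\sum_i x_i\otimes y_i$, and invoking \eqref{pro-1} in the form $r^\sharp(\alpha^*(\xi))=\alpha^{-1}(r^\sharp(\xi))$ to align the twists, with the second equality of \eqref{pro-2} being the definitional unwinding of $\ad^\star$ and $R^\star$ via \eqref{eq:1.3} exactly as you say. The only slight mis-anticipation is at the end: in the paper's computation the three double sums collapse onto $[r_{12},r_{23}]$, $-r_{13}\cdot r_{12}$ and $r_{13}\cdot r_{23}$ using only the multiplicativity of $\alpha$ and the pairing conventions, so the Hom-pre-Lie identity itself is never needed in this proposition.
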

Before proving this result, let us explain the notations. Let $(A,\cdot,\alpha)$ be a Hom-pre-Lie algebra and $r=\sum_i x_i\otimes y_i \in A\otimes A$. Set
$$r_{12}=\sum_ix_i\otimes y_i\otimes \alpha, \quad r_{13}=\sum_ix_i\otimes \alpha \otimes y_i, \quad r_{23}=\sum_i \alpha\otimes x_i \otimes y_i.$$
\begin{proof}
Let $r=\sum_i x_i\otimes y_i$. Here the Einstein summation convention is used. By \eqref{coboundary} and \eqref{pro-1}, for all $z\in A,~\xi,~\eta\in A^*$, we have
\begin{eqnarray}
\nonumber &&\langle z,\xi\circ \eta\rangle  = \langle\varphi^*(z),\xi\otimes \eta\rangle \\
\nonumber&=&\langle (L_z^{-2}\otimes\alpha+\alpha \otimes \ad_z^{-2})(x_i\otimes y_i),\xi\otimes \eta\rangle\\
\nonumber &=& \langle \alpha^{-2}(z)\cdot x_i \otimes\alpha(y_i),\xi\otimes \eta\rangle+\langle\alpha(x_i)\otimes[\alpha^{-2}(z),y_i],\xi\otimes \eta\rangle\\
\nonumber&=& \langle\alpha^{-2}(z)\cdot x_i ,\xi\rangle\langle\alpha(y_i),\eta\rangle+\langle\alpha(x_i),\xi\rangle\langle[\alpha^{-2}(z),y_i],\eta\rangle\\
\nonumber &=& \langle \alpha^{-2}(z)\cdot \langle\alpha(y_i),\eta\rangle x_i ,\xi\rangle+\langle[\alpha^{-2}(z),\langle\alpha(x_i),\xi\rangle y_i],\eta\rangle\\
\nonumber&=& \langle \alpha^{-2}(z)\cdot {\sigma(r)}^\sharp(\alpha^*(\eta)) ,\xi\rangle+\langle[\alpha^{-2}(z),r^\sharp(\alpha^*(\xi))],\eta\rangle\\
\nonumber &=& \langle R_{{\sigma(r)}^\sharp(\alpha^*(\eta))}\alpha^{-2}(z),\xi\rangle-\langle \ad_{r^\sharp(\alpha^*(\xi))}\alpha^{-2}(z),\eta\rangle \\
\nonumber&=& -\langle z,(\alpha^{-2})^* R_{{\sigma(r)}^\sharp(\alpha^*(\eta))}^*\xi\rangle+\langle z,(\alpha^{-2})^*\ad_{r^\sharp(\alpha^*(\xi))}^*\eta\rangle\\
\nonumber &=& -\langle z,(\alpha^{-2})^* R_{\alpha^{-1}({\sigma(r)}^\sharp(\alpha^*(\eta)))}^\star(\alpha^2)^*(\xi)\rangle+\langle z,(\alpha^{-2})^*\ad_{\alpha^{-1}(r^\sharp(\alpha^*(\xi)))}^\star(\alpha^2)^*(\eta)\rangle\\
\nonumber&=& -\langle z, R_{\alpha({\sigma(r)}^\sharp(\alpha^*(\eta)))}^\star\xi\rangle+\langle z,\ad_{\alpha(r^\sharp(\alpha^*(\xi)))}^\star\eta\rangle\\
\nonumber&=&-\langle z, R_{{\sigma(r)}^\sharp(\eta)}^\star\xi\rangle+\langle z,\ad_{r^\sharp(\xi)}^\star\eta\rangle,
\end{eqnarray}
which implies that \eqref{pro-2} holds.

Moreover, for all $\theta\in A^*$, we have
\begin{eqnarray}
\nonumber &&\langle\alpha(r^\sharp(\xi))\cdot \alpha(r^\sharp(\eta))-\alpha(r^\sharp(\xi\circ \eta)),\theta\rangle \\
\nonumber &=& \langle r^\sharp((\alpha^{-1})^*(\xi))\cdot r^\sharp((\alpha^{-1})^*(\eta)),\theta\rangle-\langle \xi\circ \eta,{\sigma(r)}^\sharp(\alpha^*(\theta))\rangle\\
\nonumber&=& \langle r^\sharp((\alpha^{-1})^*(\xi))\cdot r^\sharp((\alpha^{-1})^*(\eta)),\theta\rangle-\langle \ad_{\alpha({r^\sharp}(\xi))}^*((\alpha^{-2})^*(\eta))-R_{\alpha({{\sigma(r)}^\sharp(\eta)})}^*((\alpha^{-2})^*(\xi)),{\sigma(r)}^\sharp(\alpha^*(\theta))\rangle\\
\nonumber &=& \langle r^\sharp((\alpha^{-1})^*(\xi))\cdot r^\sharp((\alpha^{-1})^*(\eta)),\theta\rangle+\langle (\alpha^{-2})^*(\eta),r^\sharp((\alpha^{-1})^*(\xi))\cdot {\sigma(r)}^\sharp(\alpha^*(\theta))\rangle\\
\nonumber&&-\langle(\alpha^{-2})^*(\eta),{\sigma(r)}^\sharp(\alpha^*(\theta))\cdot  r^\sharp((\alpha^{-1})^*(\xi)) \rangle-\langle(\alpha^{-2})^*(\xi),{\sigma(r)}^\sharp(\alpha^*(\theta))\cdot {\sigma(r)}^\sharp((\alpha^{-1})^*(\eta))\rangle\\
\nonumber&=& \langle \langle x_i,(\alpha^{-1})^*(\xi)\rangle y_i\cdot \langle x_j,(\alpha^{-1})^*(\eta)\rangle y_j,\theta\rangle+\langle \langle x_i,(\alpha^{-1})^*(\xi)\rangle y_i\cdot \langle y_j,\alpha^*(\theta)\rangle x_j,(\alpha^{-2})^*(\eta)\rangle\\
\nonumber && -\langle \langle y_i,\alpha^*(\theta)\rangle x_i\cdot \langle x_j,(\alpha^{-1})^*(\xi)\rangle y_j,(\alpha^{-2})^*(\eta)\rangle-\langle \langle y_i,\alpha^*(\theta)\rangle x_i\cdot \langle y_j,(\alpha^{-1})^*(\eta)\rangle x_j,(\alpha^{-2})^*(\xi)\rangle\\
\nonumber&=& \langle x_i,(\alpha^{-1})^*(\xi)\rangle\langle x_j,(\alpha^{-1})^*(\eta)\rangle\langle y_i\cdot y_j,\theta\rangle+\langle x_i,(\alpha^{-1})^*(\xi)\rangle\langle y_j,\alpha^*(\theta)\rangle\langle y_i\cdot x_j,(\alpha^{-2})^*(\eta)\rangle\\
\nonumber && -\langle y_i,\alpha^*(\theta)\rangle\langle x_j,(\alpha^{-1})^*(\xi)\rangle\langle x_i\cdot y_j,(\alpha^{-2})^*(\eta)\rangle-\langle y_i,\alpha^*(\theta)\rangle\langle y_j,(\alpha^{-1})^*(\eta)\rangle\langle x_i\cdot x_j,(\alpha^{-2})^*(\xi)\rangle\\
\nonumber&=& \langle \alpha(x_i)\otimes \alpha(x_j)\otimes y_i\cdot y_j-\alpha(x_i)\otimes[x_j,y_i]\otimes \alpha(y_j)-x_i\cdot x_j\otimes \alpha(y_j)\otimes\alpha(y_i),(\alpha^{-2})^*(\xi)\otimes(\alpha^{-2})^*(\eta)\otimes\theta\rangle \\
\nonumber &=& [[r,r]]((\alpha^{-2})^*(\xi),(\alpha^{-2})^*(\eta),\theta) \\
&=&\nonumber\langle  [[r,r]]((\alpha^{-2})^*(\xi),(\alpha^{-2})^*(\eta)),\theta\rangle,
\end{eqnarray}
which implies that \eqref{pro-3} holds. This finishes the proof.
\end{proof}

\begin{cor}\label{cor:dualalg}
Let $(A,\cdot,\alpha)$ be a Hom-pre-Lie algebra. Let $r\in Sym^2(A)$ satisfying \eqref{pro-1} and $[[r,r]]=0$,
then $(A^\ast,\circ,(\alpha^{-1})^\ast)$ is a Hom-pre-Lie algebra, where $\circ$ is given by  \eqref{pro-2}.
\end{cor}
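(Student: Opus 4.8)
The plan is to reduce the corollary to the general machinery already established in Proposition \ref{O-Operator} together with the coboundary characterization in Proposition \ref{pro:condition}. The corollary asserts that under the hypotheses $r \in \Sym^2(A)$, equation \eqref{pro-1}, and $[[r,r]]=0$, the product $\circ$ on $A^\ast$ defined by \eqref{pro-2} makes $(A^\ast,\circ,(\alpha^{-1})^\ast)$ into a Hom-pre-Lie algebra. Since $r$ is symmetric, we have $\sigma(r)=r$, so $r^\sharp = {\sigma(r)}^\sharp$ and formula \eqref{pro-2} simplifies to $\xi\circ\eta = \ad_{r^\sharp(\xi)}^\star\eta - R_{r^\sharp(\eta)}^\star\xi$. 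The strategy is to verify directly the two defining axioms of a Hom-pre-Lie algebra for this product: first that $(\alpha^{-1})^\ast$ is multiplicative with respect to $\circ$, and second that $\circ$ satisfies the left-symmetry (Hom-pre-Lie) identity.

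\textbf{First} I would check multiplicativity, i.e. $(\alpha^{-1})^\ast(\xi\circ\eta) = (\alpha^{-1})^\ast(\xi)\circ(\alpha^{-1})^\ast(\eta)$. This should follow from the intertwining hypothesis \eqref{pro-1}, $r^\sharp\circ(\alpha^{-1})^\ast = \alpha\circ r^\sharp$, combined with the equivariance relations \eqref{hom-lie-rep-1} and \eqref{rep-1} satisfied by the dual representation operators $\ad^\star$ and $R^\star$ (Theorem \ref{dual-rep} and its corollary guarantee that $(A^*,(\alpha^{-1})^*,\ad^\star,-R^\star)$ is a representation). Expanding $(\alpha^{-1})^\ast(\xi)\circ(\alpha^{-1})^\ast(\eta)$ via the simplified \eqref{pro-2} and pushing $(\alpha^{-1})^\ast$ through the operators using these relations should recover $(\alpha^{-1})^\ast(\xi\circ\eta)$; this is a routine but careful bookkeeping of the twisting maps.

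\textbf{The main obstacle}, and the crux of the proof, is verifying the Hom-pre-Lie identity
\[
(\xi\circ\eta)\circ(\alpha^{-1})^\ast(\gamma) - (\alpha^{-1})^\ast(\xi)\circ(\eta\circ\gamma) = (\eta\circ\xi)\circ(\alpha^{-1})^\ast(\gamma) - (\alpha^{-1})^\ast(\eta)\circ(\xi\circ\gamma).
\]
The key tool here is the master formula \eqref{pro-3}, which reads $r^\sharp(\alpha^*(\xi))\cdot r^\sharp(\alpha^*(\eta)) - r^\sharp(\alpha^*(\xi\circ\eta)) = [[r,r]](\xi,\eta)$. Under the hypothesis $[[r,r]]=0$, this becomes the multiplicativity statement $r^\sharp(\alpha^*(\xi))\cdot r^\sharp(\alpha^*(\eta)) = r^\sharp(\alpha^*(\xi\circ\eta))$, i.e. $r^\sharp$ (suitably twisted) is a morphism from $(A^\ast,\circ)$ to $(A,\cdot)$. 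The plan is to use this to transport the Hom-pre-Lie identity on $A^\ast$ to the known Hom-pre-Lie identity on $(A,\cdot,\alpha)$: one expands each term of the desired $A^\ast$-identity using \eqref{pro-2}, applies the operator relations of the dual representation to collect terms, and at the decisive step replaces products of the form $r^\sharp(\cdots)\cdot r^\sharp(\cdots)$ by $r^\sharp$ of a single $\circ$-product via the vanishing of $[[r,r]]$. After this substitution the identity should collapse to the Hom-pre-Lie axiom of $A$ paired against an arbitrary test element.

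\textbf{An alternative, cleaner route} would be to invoke Proposition \ref{pro:condition} directly: condition $\rm(i)$ there is precisely the assertion that $\circ$ defines a Hom-pre-Lie structure on $A^\ast$. Thus it would suffice to observe that the hypotheses $r\in\Sym^2(A)$, \eqref{pro-1}, and $[[r,r]]=0$ force $\varphi^\ast$ defined by \eqref{coboundary} to induce, via $\langle\varphi^\ast(x),\xi\otimes\eta\rangle=\langle x,\xi\circ\eta\rangle$, exactly the product \eqref{pro-2} (this is the content of \eqref{pro-2} in Proposition \ref{O-Operator}), and that the associativity-type obstruction to $\circ$ being Hom-pre-Lie is measured by $[[r,r]]$ through \eqref{pro-3}. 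I expect the authors to take essentially this shortcut, so the proof should be short: cite \eqref{pro-2} to identify $\circ$, cite \eqref{pro-3} with $[[r,r]]=0$ to obtain the needed multiplicativity of $r^\sharp$, and then conclude the Hom-pre-Lie axioms either by the direct computation above or by reference to the symmetry $\sigma(r)=r$ which makes \eqref{pro-2} well-posed and symmetric in the required sense.
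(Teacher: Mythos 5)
Your main plan coincides with the paper's proof: the authors verify the Hom-pre-Lie identity for $\circ$ directly by expanding each term via \eqref{pro-2}, collecting terms using the fact that $(A^*,(\alpha^{-1})^*,\ad^\star,-R^\star)$ is a representation, and then killing the leftover terms with \eqref{pro-3} and $[[r,r]]=0$, exactly as you describe. Only your ``alternative, cleaner route'' should be discarded: Proposition \ref{pro:condition} takes as its hypothesis (i) precisely the statement that $\circ$ is a Hom-pre-Lie product, so invoking it here would be circular.
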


\begin{proof}
By Proposition \ref{O-Operator} and $(A^*,(\alpha^{-1})^*,\ad^\star,-R^\star)$  a representation of a Hom-pre-Lie algebra $(A,\cdot,\alpha)$. For all $\xi,\eta,\delta\in A^*,$ we have
\begin{eqnarray*}
 &&(\xi\circ \eta)\circ (\alpha^{-1})^\ast(\delta)-(\alpha^{-1})^\ast(\xi)\circ(\eta\circ \delta)-(\eta\circ \xi)\circ (\alpha^{-1})^\ast(\delta)+(\alpha^{-1})^\ast(\eta)\circ(\xi\circ \delta)\\
 &=&\ad_{r^\sharp(\xi\circ\eta)}^\star(\alpha^{-1})^\ast(\delta)-R_{r^\sharp((\alpha^{-1})^\ast(\delta))}^\star (\xi\circ\eta)-\ad_{r^\sharp((\alpha^{-1})^\ast(\xi))}^\star (\eta\circ\delta)+R_{r^\sharp(\eta\circ\delta)}^\star(\alpha^{-1})^\ast(\xi)\\
 &&-\ad_{r^\sharp(\eta\circ\xi)}^\star(\alpha^{-1})^\ast(\delta)+R_{r^\sharp((\alpha^{-1})^\ast(\delta))}^\star (\eta\circ \xi)+\ad_{r^\sharp((\alpha^{-1})^\ast(\eta))}^\star (\xi\circ\delta)-R_{r^\sharp(\xi\circ\delta)}^\star(\alpha^{-1})^\ast(\eta)\\
 &=&\ad_{r^\sharp(\xi\circ\eta)}^\star(\alpha^{-1})^\ast(\delta)-R_{r^\sharp((\alpha^{-1})^\ast(\delta))}^\star (\ad_{r^\sharp(\xi)}^\star\eta-R_{r^\sharp(\eta)}^\star\xi)\\
 &&-\ad_{r^\sharp((\alpha^{-1})^\ast(\xi))}^\star (\ad_{r^\sharp(\eta)}^\star\delta-R_{r^\sharp(\delta)}^\star\eta)+R_{r^\sharp(\eta\circ\delta)}^\star(\alpha^{-1})^\ast(\xi)\\
 &&-\ad_{r^\sharp(\eta\circ\xi)}^\star(\alpha^{-1})^\ast(\delta)+R_{r^\sharp((\alpha^{-1})^\ast(\delta))}^\star (\ad_{r^\sharp(\eta)}^\star\xi-R_{r^\sharp(\xi)}^\star\eta)\\
 &&+\ad_{r^\sharp((\alpha^{-1})^\ast(\eta))}^\star (\ad_{r^\sharp(\xi)}^\star\delta-R_{r^\sharp(\delta)}^\star\xi)-R_{r^\sharp(\xi\circ\delta)}^\star(\alpha^{-1})^\ast(\eta)\\
 &=&\ad_{r^\sharp(\xi\circ\eta)}^\star(\alpha^{-1})^\ast(\delta)-\ad_{r^\sharp(\eta\circ\xi)}^\star(\alpha^{-1})^\ast(\delta)-\ad_{\alpha(r^\sharp(\xi))}^\star\ad_{r^\sharp(\eta)}^\star\delta+\ad_{\alpha(r^\sharp(\eta))}^\star\ad_{r^\sharp(\xi)}^\star\delta\\
 &&-R_{\alpha(r^\sharp(\delta))}^\star L_{r^\sharp(\xi)}^\star \eta+L_{\alpha(r^\sharp(\xi))}^\star R_{r^\sharp(\delta)}^\star \eta-R_{\alpha(r^\sharp(\xi))}^\star R_{r^\sharp(\delta)}^\star \eta-R_{r^\sharp(\xi\circ\delta)}^\star (\alpha^{-1})^*(\eta)\\
 &&+R_{\alpha(r^\sharp(\delta))}^\star L_{r^\sharp(\eta)}^\star \xi-L_{\alpha(r^\sharp(\eta))}^\star R_{r^\sharp(\delta)}^\star \xi+R_{\alpha(r^\sharp(\eta))}^\star R_{r^\sharp(\delta)}^\star \xi+R_{r^\sharp(\eta\circ\delta)}^\star (\alpha^{-1})^*(\xi)\\
 &=&(\ad_{r^\sharp(\xi\circ\eta)}^\star-\ad_{r^\sharp(\eta\circ\xi)}^\star-\ad_{r^\sharp(\xi)\cdot r^\sharp(\eta)-r^\sharp(\eta)\cdot r^\sharp(\xi)}^\star)((\alpha^{-1})^\ast(\delta))\\
 &&+(R_{r^\sharp(\xi)\cdot r^\sharp(\delta)}^\star-R_{r^\sharp(\xi\circ\delta)}^\star)((\alpha^{-1})^\ast(\eta))-(R_{r^\sharp(\eta)\cdot r^\sharp(\delta)}^\star-R_{r^\sharp(\eta\circ\delta)}^\star)((\alpha^{-1})^\ast(\xi)).
\end{eqnarray*}
For all $ x\in A$, we have
\begin{eqnarray*}
 &&\langle(\xi\circ \eta)\circ (\alpha^{-1})^\ast(\delta)-(\alpha^{-1})^\ast(\xi)\circ(\eta\circ \delta)-(\eta\circ \xi)\circ (\alpha^{-1})^\ast(\delta)+(\alpha^{-1})^\ast(\eta)\circ(\xi\circ \delta),x\rangle\\
 &=&-\langle \ad_{{r^\sharp(\xi)\cdot r^\sharp(\eta)}-{r^\sharp(\xi\circ\eta)}}^\star(\alpha^{-1})^\ast(\delta),x\rangle+\langle \ad_{{r^\sharp(\eta)\cdot r^\sharp(\xi)}-{r^\sharp(\eta\circ\xi)})}^\star(\alpha^{-1})^\ast(\delta),x\rangle\\
 &&+\langle R_{{r^\sharp(\xi)\cdot r^\sharp(\delta)}-{r^\sharp(\xi\circ\delta)})}^\star(\alpha^{-1})^\ast(\eta),x\rangle-\langle R_{{r^\sharp(\eta)\cdot r^\sharp(\delta)}-{r^\sharp(\eta\circ\delta)})}^\star(\alpha^{-1})^\ast(\xi),x\rangle\\
 &=&-\langle \ad_{\alpha({r^\sharp(\xi))\cdot \alpha(r^\sharp(\eta))}-{\alpha(r^\sharp(\xi\circ\eta))}}^*(\alpha^{-3})^\ast(\delta),x\rangle+\langle \ad_{\alpha({r^\sharp(\eta))\cdot \alpha(r^\sharp(\xi))}-{\alpha(r^\sharp(\eta\circ\xi))}}^*(\alpha^{-3})^\ast(\delta),x\rangle\\
 &&+\langle R_{\alpha({r^\sharp(\xi))\cdot \alpha(r^\sharp(\delta))}-{\alpha(r^\sharp(\xi\circ\delta))}}^*(\alpha^{-3})^\ast(\eta),x\rangle-\langle R_{\alpha({r^\sharp(\eta))\cdot \alpha(r^\sharp(\delta))}-{\alpha(r^\sharp(\eta\circ\delta))}}^*(\alpha^{-3})^\ast(\xi),x\rangle\\
 &=&-\langle \ad_{[[r,r]]((\alpha^{-2})^*(\xi),(\alpha^{-2})^*(\eta))}^*(\alpha^{-3})^\ast(\delta),x\rangle+\langle \ad_{[[r,r]]((\alpha^{-2})^*(\eta),(\alpha^{-2})^*(\xi))}^*(\alpha^{-3})^\ast(\delta),x\rangle\\
 &&+\langle R_{[[r,r]]((\alpha^{-2})^*(\xi),(\alpha^{-2})^*(\delta))}^*(\alpha^{-3})^\ast(\eta),x\rangle-\langle R_{[[r,r]]((\alpha^{-2})^*(\eta),(\alpha^{-2})^*(\delta))}^*(\alpha^{-3})^\ast(\xi),x\rangle\\
 &=&0.
\end{eqnarray*}
Thus, $(A^\ast,\circ,(\alpha^{-1})^\ast)$ is a Hom-pre-Lie algebra.
\end{proof}

\begin{pro}\label{pro:cocycle}
Let $(A,\cdot,\alpha)$ be a Hom-pre-Lie algebra and $\varphi^*:A\longrightarrow A\otimes A$ defined by \eqref{coboundary}. If $r\in A\otimes A$ satisfies \eqref{pro-1} and $(A^*,\circ,(\alpha^{-1})^*)$ is a Hom-pre-Lie algebra, where $\circ$ is given by \eqref{pro-2}. Then $\psi^\ast$ is a $1$-cocycle of the sub-adjacent Hom-Lie algebra $(A^\ast)^C$ associated to the representation $(A^*\otimes A^*,\huaL^{-2}\otimes(\alpha^{-1})^\ast+(\alpha^{-1})^\ast\otimes \mathfrak{ad}^{-2})$ if and only if the following equation holds:
\begin{equation}\label{cocycle}
(P(x\cdot y)-P(\alpha(x))P(y))(r-\sigma(r))=0,
\end{equation}
where $P(x)=L^{-2}_x\otimes \alpha+\alpha\otimes L^{-2}_x$.
\end{pro}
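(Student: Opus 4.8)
The plan is to write out the defining $1$-cocycle condition for $\psi^\ast$ and then evaluate it against an arbitrary $x\otimes y\in A\otimes A$, turning an identity in $A^\ast\otimes A^\ast$ into a family of scalar identities indexed by $x,y\in A$ and $\xi,\eta\in A^\ast$. Writing the sub-adjacent bracket of $(A^\ast)^C$ as $\{\xi,\eta\}=\xi\circ\eta-\eta\circ\xi$ and abbreviating the representation by $\rho(\zeta)=\huaL^{-2}_\zeta\otimes(\alpha^{-1})^\ast+(\alpha^{-1})^\ast\otimes\mathfrak{ad}^{-2}_\zeta$, the condition to be analysed is
\[
\psi^\ast(\{\xi,\eta\})=\rho((\alpha^{-1})^\ast\xi)\,\psi^\ast(\eta)-\rho((\alpha^{-1})^\ast\eta)\,\psi^\ast(\xi).
\]
Pairing the left-hand side with $x\otimes y$ gives $\langle\{\xi,\eta\},x\cdot y\rangle$ by the defining property $\langle\psi^\ast(\zeta),a\otimes b\rangle=\langle\zeta,a\cdot b\rangle$, while on the right-hand side each tensor factor of $\rho$ is moved across the pairing (by taking adjoints) onto $x$ and $y$.

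First I would rewrite everything attached to $A^\ast$ in terms of the data on $A$. Formula \eqref{pro-2}, namely $\xi\circ\eta=\ad^\star_{r^\sharp(\xi)}\eta-R^\star_{\sigma(r)^\sharp(\eta)}\xi$, converts each occurrence of $\circ$ — hence of $\huaL^{-2}$ and of $\mathfrak{ad}^{-2}=\huaL^{-2}-\huaR^{-2}$ — into the star operations $\ad^\star,R^\star$ of the dual representation $(A^\ast,(\alpha^{-1})^\ast,\ad^\star,-R^\star)$, and the hypothesis \eqref{pro-1}, $r^\sharp\circ(\alpha^{-1})^\ast=\alpha\circ r^\sharp$, is then used to commute the powers of $\alpha$ past $r^\sharp$. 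Next I would unfold each star operation through its definition \eqref{eq:1.3}, that is $\ad^\star_w=\ad^\ast_{\alpha(w)}\circ(\alpha^{-2})^\ast$ and $R^\star_w=R^\ast_{\alpha(w)}\circ(\alpha^{-2})^\ast$, so that every pairing $\langle\,\cdot\,,x\otimes y\rangle$ becomes a pairing of $\xi\otimes\eta$ — up to invertible $\alpha$-twists, which do not affect the quantifiers — against an explicit element of $A\otimes A$ built from $x,y,r$ and $\alpha$. The coboundary formula \eqref{coboundary} is precisely what lets the resulting two-slot operator be recognised as $P(x)=L^{-2}_x\otimes\alpha+\alpha\otimes L^{-2}_x$.

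A useful structural remark that streamlines the whole reduction is that the identity is homogeneous of degree one in $r$: the map $\psi^\ast$ carries no $r$, whereas $\{\xi,\eta\}$ and the representation $\rho$ each contribute a single $r^\sharp$ through \eqref{pro-2}. Hence no term quadratic in $r$ can occur; in particular $[[r,r]]$ never appears, and the manipulation is purely linear (if lengthy). Moreover, in \eqref{pro-2} the product $\circ$ is built from $r^\sharp$ in its left slot and from $\sigma(r)^\sharp$ in its right slot, so that when the various contributions are assembled through the antisymmetry of $\{\xi,\eta\}$ and the antisymmetry of the cocycle condition under $\xi\leftrightarrow\eta$, the symmetric part of $r$ cancels and only the difference $r-\sigma(r)$ survives. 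Matching the two tensor slots then yields
\begin{multline*}
\big\langle\,\psi^\ast(\{\xi,\eta\})-\rho((\alpha^{-1})^\ast\xi)\psi^\ast(\eta)+\rho((\alpha^{-1})^\ast\eta)\psi^\ast(\xi),\ x\otimes y\,\big\rangle\\
=\big\langle\big(P(x\cdot y)-P(\alpha(x))P(y)\big)(r-\sigma(r)),\ \xi\otimes\eta\big\rangle,
\end{multline*}
so that the cocycle condition holding for all $\xi,\eta$ is equivalent to \eqref{cocycle} holding for all $x,y$. Throughout, the standing hypothesis that $(A^\ast,\circ,(\alpha^{-1})^\ast)$ is a Hom-pre-Lie algebra is what guarantees that $(A^\ast)^C$ is a Hom-Lie algebra and that $\huaL^{-2},\mathfrak{ad}^{-2}$ are genuine representations, so that the cocycle condition is well-posed in the first place.

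The main obstacle is not conceptual but the relentless tracking of the $\alpha$-twists: keeping the correct powers of $\alpha,\alpha^\ast,(\alpha^{-1})^\ast,(\alpha^{-2})^\ast$ as they are pushed through $r^\sharp$ via \eqref{pro-1} and through the star-to-dual conversions \eqref{eq:1.3}, and, at the end, grouping the surviving terms so that the two slots assemble into the symmetric operator $P$ and the symmetric part of $r$ is visibly annihilated. I would control this by separating the two tensor slots of $\rho$ from the outset and processing them against the two slots of $P$ independently; this makes both the cancellation of the symmetric part of $r$ and the emergence of $P(x\cdot y)-P(\alpha(x))P(y)$ transparent, and reduces the remaining work to routine (if voluminous) simplification.
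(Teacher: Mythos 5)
Your plan is sound and would yield a correct proof, but it is not the route the paper takes. You expand the $1$-cocycle identity for $\psi^\ast$ directly, pairing it with $x\otimes y$ and pushing the tensor-product representation $\huaL^{-2}\otimes(\alpha^{-1})^\ast+(\alpha^{-1})^\ast\otimes\mathfrak{ad}^{-2}$ through the duality. The paper instead never touches that representation in this proof: it first invokes Theorem \ref{equivalent} together with Proposition \ref{matched-pair-equivalent} to replace the cocycle condition for $\psi^\ast$ by the single Hom-Lie matched-pair condition \eqref{matche-pair-2} for $(A^C,(A^\ast)^C,L^\star,\huaL^\star)$, and then computes the defect of \emph{that} scalar identity against $\alpha^2(y)$, showing it equals $\langle\xi\otimes\eta,(P(x\cdot y)-P(\alpha(x))P(y))(r-\sigma(r))\rangle$. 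The paper's detour buys a reduction from a two-slot identity in $A^\ast\otimes A^\ast$ to a one-slot identity in $A^\ast$ before the heavy computation begins; your version buys self-containedness, at the price of carrying the tensor representation through the whole calculation. The ingredients are otherwise identical — \eqref{pro-2} to convert $\circ$, $\huaL^{-2}$ and $\mathfrak{ad}^{-2}$ into $\ad^\star$ and $R^\star$, and \eqref{pro-1} (equivalently $(\alpha^{-1}\otimes\Id)r=(\Id\otimes\alpha)r$, which the paper isolates as its first step) to move $\alpha$'s past $r^\sharp$ — and your structural observations are correct and consistent with the paper's outcome: the identity is linear in $r$, so $[[r,r]]$ cannot appear, and the antisymmetry in $\xi,\eta$ is exactly what makes only $r-\sigma(r)$ survive (the paper realizes this via the auxiliary identities \eqref{1-cocycle-2}--\eqref{1-cocycle-5} converting the $\eta\otimes\xi$ pairings into $\sigma(r)$ terms). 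The only caveat is that your displayed identity should be read, as you say, up to invertible $\alpha$-twists of the arguments; since $\alpha$ is assumed invertible this does not affect the equivalence of the two vanishing conditions, but the final write-up would still owe the reader the full bookkeeping that you defer as routine.
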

\begin{proof}
By   \eqref{pro-1}, for all $\xi,\eta\in A^*$, we have
\begin{eqnarray}
\nonumber \langle r^\sharp((\alpha^{-1})^*(\xi))-\alpha(r^\sharp(\xi)),\eta\rangle&=&\langle r,(\alpha^{-1})^*(\xi)\otimes \eta\rangle-\langle r^\sharp(\xi),\alpha^*(\eta)\rangle \\
\nonumber&=&\langle (\alpha^{-1}\otimes {\Id})r,\xi\otimes \eta\rangle-\langle r,\xi\otimes \alpha^*(\eta)\rangle\\
\nonumber&=&\langle (\alpha^{-1}\otimes {\Id}-{\Id} \otimes \alpha)r,\xi\otimes \eta\rangle\\
\nonumber&=&0,
\end{eqnarray}
which implies that
\begin{equation}\label{1-cocycle-1}
(\alpha^{-1}\otimes {\Id})r=({\Id} \otimes \alpha)r.
\end{equation}
Let $r=\sum_i x_i\otimes y_i$. Here the Einstein summation convention is used. For all $x,y\in A, \xi,\eta\in A^*$, we have
\begin{eqnarray*}
&&\langle-L^\star_{\alpha(x)}\{\xi,\eta\}+\{L_x^\star \xi,(\alpha^{-1})^*(\eta)\}+\{(\alpha^{-1})^*(\xi),L_x^\star \eta\}+L^\star_{\huaL^\star_\eta x}(\alpha^{-1})^*(\xi)-L^\star_{\huaL^\star_\xi x}(\alpha^{-1})^*(\eta),\alpha^2(y)\rangle \\
&=&\langle \{\xi,\eta\},x\cdot y\rangle+\langle L^\star_x\xi\circ (\alpha^{-1})^*(\eta),\alpha^2(y) \rangle-\langle(\alpha^{-1})^*(\eta)\circ  L^\star_x\xi,\alpha^2(y)\rangle+\langle(\alpha^{-1})^*(\xi)\circ  L^\star_x\eta,\alpha^2(y)\rangle\\
&&- \langle L^\star_x\eta\circ (\alpha^{-1})^*(\xi),\alpha^2(y)\rangle-\langle(\alpha^{-1})^*(\xi),\alpha^{-1}(\huaL^\star_\eta x)\cdot y)\rangle +\langle(\alpha^{-1})^*(\eta),\alpha^{-1}(\huaL^\star_\xi x)\cdot y)\rangle\\
&=& \langle\xi\circ \eta,x\cdot y\rangle-\langle\eta\circ \xi,x\cdot y\rangle+\langle (\alpha^2)^*(L^\star_x\xi)\circ \alpha^*(\eta),y\rangle-\langle\alpha^*(\eta)\circ (\alpha^2)^*(L^\star_x\xi),y \rangle \\
&&+\langle\alpha^*(\xi)\circ (\alpha^2)^*(L^\star_x\eta),y\rangle -\langle (\alpha^2)^*(L^\star_x\eta)\circ \alpha^*(\xi),y\rangle-\langle(\alpha^{-1})^*(\xi),R_y \alpha^{-1}(\huaL^\star_\eta x) \rangle\\
&&+\langle(\alpha^{-1})^*(\eta),R_y \alpha^{-1}(\huaL^\star_\xi x) \rangle\\
&=& \langle\xi\otimes\eta,\varphi^*(x\cdot y)\rangle-\langle\eta\otimes \xi,\varphi^*(x\cdot y)\rangle+\langle L_{\alpha^{-2}(x)}^\star(\alpha^2)^*(\xi)\circ \alpha^*(\eta),y\rangle\\
&& -\langle  \alpha^*(\eta)\circ L_{\alpha^{-2}(x)}^\star(\alpha^2)^*(\xi),y \rangle+ \langle  \alpha^*(\xi)\circ L_{\alpha^{-2}(x)}^\star(\alpha^2)^*(\eta),y \rangle-\langle L_{\alpha^{-2}(x)}^\star(\alpha^2)^*(\eta)\circ \alpha^*(\xi),y\rangle\\
&&+\langle (\alpha^{-1})^*R_y^*(\alpha^{-1})^*(\xi),\huaL^\star_\eta x\rangle-\langle (\alpha^{-1})^*R_y^*(\alpha^{-1})^*(\eta),\huaL^\star_\xi x\rangle   \\
&=& \langle\xi\otimes\eta,\varphi^*(x\cdot y)\rangle-\langle\eta\otimes \xi,\varphi^*(x\cdot y)\rangle+\langle L_{\alpha^{-1}(x)}^*\xi\circ \alpha^*(\eta),y\rangle-\langle \alpha^*(\eta)\circ L_{\alpha^{-1}(x)}^*\xi,y\rangle\\
&& +\langle \alpha^*(\xi)\circ L_{\alpha^{-1}(x)}^*\eta,y\rangle-\langle L_{\alpha^{-1}(x)}^*\eta\circ \alpha^*(\xi),y\rangle+\langle (\alpha^{-1})^*R_{\alpha^{-1}(y)}^\star\alpha^*(\xi),\huaL^\star_\eta x \rangle\\
&&-\langle (\alpha^{-1})^*R_{\alpha^{-1}(y)}^\star\alpha^*(\eta),\huaL^\star_\xi x \rangle\\
&=&  \langle\xi\otimes\eta,\varphi^*(x\cdot y)\rangle-\langle\eta\otimes \xi,\varphi^*(x\cdot y)\rangle+\langle (L^{-2})^*_{\alpha(x)}\xi\circ \alpha^*(\eta),y\rangle-\langle \alpha^*(\eta)\circ (L^{-2})^*_{\alpha(x)}\xi,y\rangle\\
&& +\langle \alpha^*(\xi)\circ (L^{-2})^*_{\alpha(x)}\eta,y\rangle-\langle (L^{-2})^*_{\alpha(x)}\eta\circ \alpha^*(\xi),y\rangle+\langle R^\star_y\xi,\huaL^\star_\eta x \rangle-\langle R^\star_y\eta,\huaL^\star_\xi x \rangle\\
&=& \langle\xi\otimes\eta,\varphi^*(x\cdot y)\rangle-\langle\eta\otimes \xi,\varphi^*(x\cdot y)\rangle+\langle (L^{-2})^*_{\alpha(x)}\xi\otimes \alpha^*(\eta),\varphi^*(y)\rangle-\langle \alpha^*(\eta)\otimes(L^{-2})^*_{\alpha(x)}\xi,\varphi^*(y)\rangle\\
&& +\langle \alpha^*(\xi)\otimes (L^{-2})^*_{\alpha(x)}\eta,\varphi^*(y)\rangle-\langle (L^{-2})^*_{\alpha(x)}\eta\otimes \alpha^*(\xi),\varphi^*(y)\rangle-\langle \alpha^*(\eta)\circ (\alpha^2)^*(R^\star_y\xi), x \rangle\\
&&+\langle \alpha^*(\xi)\circ (\alpha^2)^*(R^\star_y\eta), x \rangle\\
&=& \langle\xi\otimes\eta,\varphi^*(x\cdot y)\rangle-\langle\eta\otimes \xi,\varphi^*(x\cdot y)\rangle+\langle (L^{-2})^*_{\alpha(x)}\xi\otimes \alpha^*(\eta),\varphi^*(y)\rangle-\langle \alpha^*(\eta)\otimes(L^{-2})^*_{\alpha(x)}\xi,\varphi^*(y)\rangle\\
&& +\langle \alpha^*(\xi)\otimes (L^{-2})^*_{\alpha(x)}\eta,\varphi^*(y)\rangle-\langle (L^{-2})^*_{\alpha(x)}\eta\otimes \alpha^*(\xi),\varphi^*(y)\rangle-\langle \alpha^*(\eta)\circ R^\star_{\alpha^{-2}(y)}(\alpha^2)^*(\xi), x \rangle\\
&&+\langle \alpha^*(\xi)\circ R^\star_{\alpha^{-2}(y)}(\alpha^2)^*(\eta), x \rangle\\
&=& \langle\xi\otimes\eta,\varphi^*(x\cdot y)\rangle-\langle\eta\otimes \xi,\varphi^*(x\cdot y)\rangle+\langle (L^{-2})^*_{\alpha(x)}\xi\otimes \alpha^*(\eta),\varphi^*(y)\rangle-\langle \alpha^*(\eta)\otimes(L^{-2})^*_{\alpha(x)}\xi,\varphi^*(y)\rangle\\
&& +\langle \alpha^*(\xi)\otimes (L^{-2})^*_{\alpha(x)}\eta,\varphi^*(y)\rangle-\langle (L^{-2})^*_{\alpha(x)}\eta\otimes \alpha^*(\xi),\varphi^*(y)\rangle-\langle \alpha^*(\eta)\circ R^*_{\alpha^{-1}(y)}\xi, x \rangle\\
&&+\langle \alpha^*(\xi)\circ R^*_{\alpha^{-1}(y)}\eta, x \rangle\\
&=&\langle\xi\otimes\eta,\varphi^*(x\cdot y)\rangle-\langle\eta\otimes \xi,\varphi^*(x\cdot y)\rangle+\langle (L^{-2})^*_{\alpha(x)}\xi\otimes \alpha^*(\eta),\varphi^*(y)\rangle-\langle \alpha^*(\eta)\otimes(L^{-2})^*_{\alpha(x)}\xi,\varphi^*(y)\rangle\\
&& +\langle \alpha^*(\xi)\otimes (L^{-2})^*_{\alpha(x)}\eta,\varphi^*(y)\rangle-\langle (L^{-2})^*_{\alpha(x)}\eta\otimes \alpha^*(\xi),\varphi^*(y)\rangle-\langle \alpha^*(\eta)\circ (R^{-2})^*_{\alpha(y)}\xi, x \rangle\\
&&+\langle \alpha^*(\xi)\circ (R^{-2})^*_{\alpha(y)}\eta, x \rangle\\
&=&\langle\xi\otimes\eta,\varphi^*(x\cdot y)\rangle-\langle\xi\otimes\eta,(L^{-2}_{\alpha(x)}\otimes \alpha)\varphi^*(y) \rangle-\langle\xi\otimes\eta,(\alpha \otimes L^{-2}_{\alpha(x)})\varphi^*(y)\rangle-\langle\xi\otimes\eta,(\alpha \otimes R^{-2}_{\alpha(y)})\varphi^*(x)\rangle\\
&&-\langle\eta\otimes\xi,\varphi^*(x\cdot y)\rangle+\langle\eta\otimes\xi,(L^{-2}_{\alpha(x)}\otimes \alpha)\varphi^*(y) \rangle+\langle\eta\otimes\xi,(\alpha \otimes L^{-2}_{\alpha(x)})\varphi^*(y)\rangle+\langle\eta\otimes\xi,(\alpha \otimes R^{-2}_{\alpha(y)})\varphi^*(x)\rangle
\end{eqnarray*}
Moreover, we have
\begin{eqnarray}\label{1-cocycle-2}
\nonumber \langle\eta\otimes\xi,\varphi^*(x\cdot y)\rangle
\nonumber &=& \langle \eta\otimes\xi,(L^{-2}_{x\cdot y}\otimes \alpha+\alpha\otimes \ad^{-2}_{x\cdot y})r\rangle \\
\nonumber &=& \langle \eta\otimes\xi,(L^{-2}_{x\cdot y}\otimes \alpha+\alpha\otimes \ad^{-2}_{x\cdot y})(x_i \otimes y_i)\rangle\\
\nonumber &=&\langle \eta\otimes\xi,L^{-2}_{x\cdot y} x_i\otimes \alpha(y_i)+\alpha(x_i)\otimes \ad^{-2}_{x\cdot y}y_i\rangle\\
\nonumber &=&\langle \xi\otimes \eta,\alpha(y_i)\otimes L^{-2}_{x\cdot y} x_i+\ad^{-2}_{x\cdot y}y_i \otimes \alpha(x_i)\rangle\\
&=&\langle \xi\otimes \eta,(\alpha \otimes L^{-2}_{x\cdot y}+\ad^{-2}_{x\cdot y}\otimes \alpha)\sigma(r)\rangle.
\end{eqnarray}
Similarly, we have
\begin{eqnarray}
\label{1-cocycle-3}\langle\eta\otimes\xi,(L^{-2}_{\alpha(x)}\otimes \alpha)\varphi^*(y) \rangle=\langle\xi\otimes \eta,(\alpha \otimes L^{-2}_{\alpha(x)})(\alpha\otimes L^{-2}_y+\ad^{-2}_y \otimes \alpha)\sigma(r) \rangle,\\
\label{1-cocycle-4}\langle\eta\otimes\xi,(\alpha\otimes L^{-2}_{\alpha(x)})\varphi^*(y) \rangle=\langle\xi\otimes \eta,( L^{-2}_{\alpha(x)}\otimes \alpha)(\alpha\otimes L^{-2}_y+\ad^{-2}_y \otimes \alpha)\sigma(r) \rangle,\\
\label{1-cocycle-5}\langle\eta\otimes\xi,(\alpha\otimes R^{-2}_{\alpha(y)})\varphi^*(x) \rangle=\langle\xi\otimes \eta,( R^{-2}_{\alpha(y)}\otimes \alpha)(\alpha\otimes L^{-2}_x+\ad^{-2}_x \otimes \alpha)\sigma(r) \rangle.
\end{eqnarray}
Thus, by \eqref{1-cocycle-1}-\eqref{1-cocycle-5} and the definition of a Hom-pre-Lie algebra, we have
\begin{eqnarray*}
&&\langle-L^\star_{\alpha(x)}\{\xi,\eta\}+\{L_x^\star \xi,(\alpha^{-1})^*(\eta)\}+\{(\alpha^{-1})^*(\xi),L_x^\star \eta\}+L^\star_{\huaL^\star_\eta x}(\alpha^{-1})^*(\xi)-L^\star_{\huaL^\star_\xi x}(\alpha^{-1})^*(\eta),\alpha^2(y)\rangle \\
&=&\langle\xi\otimes\eta,(L^{-2}_{x\cdot y}\otimes \alpha+\alpha\otimes \ad^{-2}_{x\cdot y})r\rangle-\langle\xi\otimes\eta,(L^{-2}_{\alpha(x)}\otimes \alpha)(L^{-2}_ y\otimes \alpha+\alpha\otimes \ad^{-2}_y)r \rangle\\
&&-\langle\xi\otimes\eta,(\alpha \otimes L^{-2}_{\alpha(x)})(L^{-2}_ y\otimes \alpha+\alpha\otimes \ad^{-2}_y)r\rangle-\langle\xi\otimes\eta,(\alpha \otimes R^{-2}_{\alpha(y)})(L^{-2}_ x\otimes \alpha+\alpha\otimes \ad^{-2}_x)r\rangle\\
&&-\langle\xi\otimes\eta,(\alpha \otimes L^{-2}_{x\cdot y}+\ad^{-2}_{x\cdot y} \otimes\alpha)\sigma(r)\rangle+\langle\xi\otimes\eta,(\alpha \otimes L^{-2}_{\alpha(x)})(\alpha \otimes L^{-2}_ y+ \ad^{-2}_y \otimes \alpha)\sigma(r)\rangle\\
&&+\langle\xi\otimes\eta,(L^{-2}_{\alpha(x)}\otimes \alpha)(\alpha \otimes L^{-2}_ y+ \ad^{-2}_y \otimes \alpha)\sigma(r)\rangle+\langle\xi\otimes\eta,(R^{-2}_{\alpha(y)}\otimes \alpha)(\alpha \otimes L^{-2}_ x+ \ad^{-2}_x \otimes \alpha)\sigma(r)\rangle\\
&=&\langle \xi\otimes \eta,\alpha^{-2}(x\cdot y)\cdot x_i\otimes \alpha(y_i)\rangle+\alpha(x_i)\otimes \alpha^{-2}(x\cdot y)\cdot y_i-\alpha^{-1}(x)\cdot(\alpha^{-2}(y)\cdot x_i)\otimes\alpha^2(y_i)\\
&&-\alpha^{-1}(x)\cdot \alpha(x_i)\otimes \alpha^{-1}(y)\cdot \alpha(y_i)-\alpha^{-1}(y)\cdot \alpha(x_i)\otimes \alpha^{-1}(x)\cdot \alpha(y_i)-\alpha^2(x_i)\otimes \alpha^{-1}(x)\cdot(\alpha^{-2}(y)\cdot y_i)\rangle\\
&&+\langle \xi\otimes \eta, -\alpha(y_i)\otimes \alpha^{-2}(x\cdot y)\cdot x_i-\alpha^{-2}(x\cdot y)\cdot y_i\otimes\alpha(x_i)+\alpha^2(y_i)\otimes \alpha^{-1}(x)\cdot(\alpha^{-2}(y)\cdot x_i)\rangle\\
&&+\alpha^{-1}(y)\cdot \alpha(y_i)\otimes \alpha^{-1}(x)\cdot \alpha(x_i)+\alpha^{-1}(x)\cdot \alpha(y_i)\otimes\alpha^{-1}(y)\cdot \alpha(x_i)+\alpha^{-1}(x)\cdot(\alpha^{-2}(y)\cdot y_i)\otimes \alpha^2(x_i)\rangle\\
&=&\langle \xi\otimes \eta,(L^{-2}_{x\cdot y}\otimes \alpha+\alpha\otimes L^{-2}_{x\cdot y}-(L^{-2}_{\alpha(x)}\otimes \alpha+\alpha\otimes L^{-2}_{\alpha(x)})(L^{-2}_y\otimes \alpha+\alpha\otimes L^{-2}_y))(x_i\otimes y_i)\rangle\\
&&-\langle \xi\otimes \eta,(L^{-2}_{x\cdot y}\otimes \alpha+\alpha\otimes L^{-2}_{x\cdot y}-(L^{-2}_{\alpha(x)}\otimes \alpha+\alpha\otimes L^{-2}_{\alpha(x)})(L^{-2}_y\otimes \alpha+\alpha\otimes L^{-2}_y))(y_i\otimes x_i)\rangle\\
&=&\langle \xi\otimes \eta, (P(x\cdot y)-P(\alpha(x))P(y))(r-\sigma(r)) \rangle,
\end{eqnarray*}
which implies that  \eqref{cocycle} holds if and only if \eqref{matche-pair-2} holds. By Proposition \ref{matched-pair-equivalent}, $\eqref{matche-pair-2}\Longleftrightarrow\eqref{pre-matche-pair-1} \Longleftrightarrow \eqref{pre-matche-pair-4}$, and by Theorem \ref{equivalent}, $\psi^\ast$ is a 1-cocycle of the sub-adjacent Hom-Lie algebra $(A^\ast)^C$ associated to the representation $(A^*\otimes A^*,\huaL^{-2}\otimes(\alpha^{-1})^\ast+(\alpha^{-1})^\ast\otimes \mathfrak{ad}^{-2})$ if and only if $\eqref{pre-matche-pair-1}$ and $\eqref{pre-matche-pair-4}$ hold. Thus, $\psi^\ast$ is a $1$-cocycle of the sub-adjacent Hom-Lie algebra $(A^\ast)^C$ associated to the representation $(A^*\otimes A^*,\huaL^{-2}\otimes(\alpha^{-1})^\ast+(\alpha^{-1})^\ast\otimes \mathfrak{ad}^{-2})$ if and only if \eqref{cocycle} holds.
\end{proof}
\begin{defi}
Let $(A,\cdot,\alpha)$ be a Hom-pre-Lie algebra. Assume that $r\in A\otimes A$ is symmetric and satisfies \eqref{pro-1}. Then the equation $[[r,r]]=0$ is called the {\bf Hom-$\frks$-equation} in $(A,\cdot,\alpha)$ and $r$ is called a {\bf Hom-$\frks$-matrix}. A {\bf triangular Hom-pre-Lie bialgebra} is a coboundary Hom-pre-Lie bialgebra, in which $r$ is a Hom-$\frks$-matrix.
\end{defi}

\begin{thm}
  Let $(A,\cdot,\alpha)$ be a Hom-pre-Lie algebra and $r$   a   Hom-$\frks$-matrix. Then $(A,A^\ast,\varphi^*,\psi^*)$ is a Hom-pre-Lie bialgebra, where $\varphi^*:A\longrightarrow A\otimes A$ defined by \eqref{coboundary} and $\psi^*$ is the dual of the multiplication $\cdot$ in $A$.
\end{thm}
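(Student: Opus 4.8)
The plan is to verify the two conditions of Proposition \ref{pro:condition}, so the whole argument reduces to an assembly of the preceding results. Note first that, since $r$ is a Hom-$\frks$-matrix, the map $\varphi^*$ defined by \eqref{coboundary} is by construction a $1$-coboundary, and hence a $1$-cocycle of the sub-adjacent Hom-Lie algebra $A^C$ associated to the representation $(A\otimes A,L^{-2}\otimes\alpha+\alpha\otimes\ad^{-2})$; this is exactly the standing hypothesis under which Proposition \ref{pro:condition} is stated. Thus it suffices to check that $\rm(i)$ the dual operation $\circ$ on $A^*$ defines a Hom-pre-Lie algebra structure, and $\rm(ii)$ that $\psi^*$ is a $1$-cocycle of $(A^*)^C$ associated to the representation $(A^*\otimes A^*,\huaL^{-2}\otimes(\alpha^{-1})^\ast+(\alpha^{-1})^\ast\otimes\mathfrak{ad}^{-2})$.

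To establish $\rm(i)$, I would unwind the definition of a Hom-$\frks$-matrix: it is symmetric, i.e. $r\in\Sym^2(A)$, and satisfies \eqref{pro-1} together with the Hom-$\frks$-equation $[[r,r]]=0$. These are precisely the hypotheses of Corollary \ref{cor:dualalg}, which yields directly that $(A^\ast,\circ,(\alpha^{-1})^\ast)$ is a Hom-pre-Lie algebra, with $\circ$ the operation determined by \eqref{pro-2} (equivalently, the one dual to $\varphi^*$). This settles condition $\rm(i)$ with no further work.

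To establish $\rm(ii)$, I would invoke Proposition \ref{pro:cocycle}. That proposition applies precisely because $r$ satisfies \eqref{pro-1} and, by the previous step, $(A^\ast,\circ,(\alpha^{-1})^\ast)$ is a Hom-pre-Lie algebra; it reduces the cocycle property of $\psi^*$ to the single identity \eqref{cocycle}, namely $(P(x\cdot y)-P(\alpha(x))P(y))(r-\sigma(r))=0$. The decisive observation is that the symmetry of $r$ forces $\sigma(r)=r$, so that $r-\sigma(r)=0$ and \eqref{cocycle} holds identically, whatever the coefficient operator $P(x\cdot y)-P(\alpha(x))P(y)$ may be. Hence $\psi^*$ is the required $1$-cocycle and condition $\rm(ii)$ follows.

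With both conditions in hand, Proposition \ref{pro:condition} immediately gives that $(A,A^\ast,\varphi^*,\psi^*)$ is a Hom-pre-Lie bialgebra. There is no genuine computational obstacle remaining: the only point requiring care is recognizing that the symmetry built into the definition of a Hom-$\frks$-matrix is exactly what annihilates the antisymmetric term $r-\sigma(r)$ appearing in \eqref{cocycle}, thereby trivializing the potential obstruction to $\psi^*$ being a cocycle. All the substantive analytic content has already been absorbed into Corollary \ref{cor:dualalg} and Proposition \ref{pro:cocycle}.
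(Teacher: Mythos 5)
Your proposal is correct and follows essentially the same route as the paper: Corollary \ref{cor:dualalg} for the Hom-pre-Lie structure on $A^*$, Proposition \ref{pro:cocycle} for the cocycle condition on $\psi^*$, and Proposition \ref{pro:condition} to assemble the bialgebra. You are in fact slightly more explicit than the paper in noting that the symmetry of $r$ kills the term $r-\sigma(r)$ in \eqref{cocycle}, a step the paper leaves implicit when it invokes Proposition \ref{pro:cocycle}.
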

\begin{proof}
 By Corollary \ref{cor:dualalg}, $(A^\ast,\circ,(\alpha^{-1})^\ast)$ is a Hom-pre-Lie algebra, where $\circ$ is given by  \eqref{pro-2}. By Proposition \ref{pro:cocycle}, $\psi^\ast$ is a $1$-cocycle of the sub-adjacent Hom-Lie algebra $(A^\ast)^C$ associated to the representation $(A^*\otimes A^*,\huaL^{-2}\otimes(\alpha^{-1})^\ast+(\alpha^{-1})^\ast\otimes \mathfrak{ad}^{-2})$.  By Proposition \ref{pro:condition}  $(A,A^\ast,\varphi^*,\psi^*)$ is a Hom-pre-Lie bialgebra.
\end{proof}


\section{Hom-$\huaO$-operators, Hom-L-dendriform algebras and Hom-$\frks$-matrices}

In this section, we introduce the notions of Hom-$\huaO$-operator on Hom-pre-Lie algebra and Hom-L-dendriform algebra, by which we construct Hom-$\frks$-matrices.
\begin{defi}
Let $(A,\cdot,\alpha)$ be a Hom-pre-Lie algebra and $(V,\beta,\rho,\mu)$ be a representation of $(A,\cdot,\alpha)$. A linear map $T:V\longrightarrow A$ is called {\bf Hom-$\huaO$-operator} if for all $u,v\in V$, the following equalities are satisfied
\begin{eqnarray}
  \label{eq:opreator-1}T\circ \beta&=&\alpha\circ T,\\
   \label{eq:operator-2}T(u)\cdot T(v)&=&T(\rho(T(\beta^{-1}(u)))(v)+\mu(T(\beta^{-1}(v)))(u)).
\end{eqnarray}
\end{defi}
\begin{pro}
Let $(A,\cdot,\alpha)$ be a Hom-pre-Lie algebra and $r\in A\otimes A$ is symmetric. Then $r$ satisfies \eqref{pro-1} and $[[r,r]]=0$ if and only if $r^{\sharp}\circ (\alpha^{-1})^*$ is a Hom-$\huaO$-operator associated to the representation $(A^*,(\alpha^{-1})^*,\ad^\star,-R^\star)$.
\end{pro}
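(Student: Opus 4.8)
The plan is to set $T := r^\sharp\circ(\alpha^{-1})^*$ and to check the two defining identities \eqref{eq:opreator-1} and \eqref{eq:operator-2} of a Hom-$\huaO$-operator directly, translating them by means of the formulas \eqref{pro-2} and \eqref{pro-3} already established in Proposition \ref{O-Operator} into \eqref{pro-1} and the Hom-$\frks$-equation $[[r,r]]=0$ respectively. The representation in question is $(A^*,(\alpha^{-1})^*,\ad^\star,-R^\star)$, so $\beta=(\alpha^{-1})^*$, $\beta^{-1}=\alpha^*$, $\rho=\ad^\star$, $\mu=-R^\star$.

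First I would handle the multiplicativity condition \eqref{eq:opreator-1}. Writing out $T$, the identity $T\circ(\alpha^{-1})^*=\alpha\circ T$ reads $r^\sharp\circ(\alpha^{-2})^*=\alpha\circ r^\sharp\circ(\alpha^{-1})^*$; composing on the right with $\alpha^*$ and using $(\alpha^{-1})^*\circ\alpha^*=\Id$ turns this into $r^\sharp\circ(\alpha^{-1})^*=\alpha\circ r^\sharp$, i.e.\ exactly \eqref{pro-1}. Since $\alpha^*$ is invertible the step is reversible, so \eqref{eq:opreator-1} and \eqref{pro-1} are equivalent with no further hypotheses. In the ``only if'' direction this lets me extract \eqref{pro-1} at the outset, after which Proposition \ref{O-Operator} is applicable.

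Next, assuming \eqref{pro-1}, I would simplify the right-hand side of \eqref{eq:operator-2}. Since $T(\alpha^*(\xi))=r^\sharp((\alpha^{-1})^*\alpha^*\xi)=r^\sharp(\xi)$, the argument of $T$ becomes $\ad^\star_{r^\sharp(\xi)}\eta-R^\star_{r^\sharp(\eta)}\xi$. As $r$ is symmetric, $\sigma(r)^\sharp=r^\sharp$, so \eqref{pro-2} identifies this with $\xi\circ\eta$; hence \eqref{eq:operator-2} is equivalent to $T(\xi)\cdot T(\eta)=T(\xi\circ\eta)$. Finally I would convert this into $[[r,r]]=0$: by \eqref{pro-1} one has $T(\xi)=\alpha(r^\sharp(\xi))$, so since $\alpha$ is an algebra morphism, $T(\xi)\cdot T(\eta)-T(\xi\circ\eta)=\alpha\big(r^\sharp(\xi)\cdot r^\sharp(\eta)-r^\sharp(\xi\circ\eta)\big)$. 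On the other hand \eqref{pro-1} also gives $r^\sharp(\alpha^*(\xi))=\alpha^{-1}(r^\sharp(\xi))$, which recasts \eqref{pro-3} as $\alpha^{-1}\big(r^\sharp(\xi)\cdot r^\sharp(\eta)-r^\sharp(\xi\circ\eta)\big)=[[r,r]](\xi,\eta)$. Combining the two displays yields $T(\xi)\cdot T(\eta)-T(\xi\circ\eta)=\alpha^2\big([[r,r]](\xi,\eta)\big)$, and since $\alpha$ is invertible the left side vanishes for all $\xi,\eta$ exactly when $[[r,r]]=0$.

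I do not expect a genuine obstacle here: the one computation with real content, namely the identification of the associator defect of $r^\sharp$ with $[[r,r]]$, is already carried out in Proposition \ref{O-Operator}. The only delicate point is the bookkeeping of the powers of $\alpha$ and $\alpha^*$ — in particular using \eqref{pro-1} consistently to pass between $T$, $r^\sharp$, and the $\alpha^*$-twisted arguments appearing in \eqref{pro-2} and \eqref{pro-3} — together with the remark that invoking \eqref{pro-2}, \eqref{pro-3} is legitimate in both directions because \eqref{pro-1} is secured before they are used.
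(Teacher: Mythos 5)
Your proposal is correct and follows exactly the route the paper intends: the paper's own proof is the one-line remark ``By Proposition \ref{O-Operator}, it is straightforward,'' and your argument is precisely the straightforward verification being alluded to — identifying \eqref{eq:opreator-1} with \eqref{pro-1}, then using \eqref{pro-2} (with $\sigma(r)=r$) to reduce \eqref{eq:operator-2} to $T(\xi)\cdot T(\eta)=T(\xi\circ\eta)$ and \eqref{pro-3} to identify its defect with $\alpha^2\bigl([[r,r]](\xi,\eta)\bigr)$. Your care in securing \eqref{pro-1} before invoking \eqref{pro-2} and \eqref{pro-3} in either direction is exactly the right bookkeeping.
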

\begin{proof}
By Proposition \ref{O-Operator}, it is straightforward.
\end{proof}

\begin{defi}
A {\bf Hom-L-dendriform algebra} $(A,\triangleright,\triangleleft,\alpha)$ is a vector space $A$ equipped with two bilinear products $\triangleright,\triangleleft:A\otimes A\longrightarrow A$ and $\alpha\in \gl(V)$, such that for all $x,y,z \in A$, $\alpha(x\triangleright y)=\alpha(x)\triangleright \alpha(y)$, $\alpha(x\triangleleft y)=\alpha(x)\triangleleft \alpha(y)$, and the following equalities are satisfied
\begin{eqnarray}
  \label{eq:L-1}\\ \nonumber (x\triangleright y)\triangleright \alpha(z)+(x\triangleleft y)\triangleright \alpha(z)+\alpha(y)\triangleright(x\triangleright z)-(y\triangleleft x)\triangleright\alpha(z)-(y\triangleright x)\triangleright\alpha(z)-\alpha(x)\triangleright(y\triangleright z)=0,\\
   \label{eq:L-2}(x\triangleright y)\triangleleft\alpha(z)+\alpha(y)\triangleleft(x\triangleright z)+\alpha(y)\triangleleft(x\triangleleft z)-(y\triangleleft x)\triangleleft \alpha(z)-\alpha(x)\triangleright(y\triangleleft z)=0.
\end{eqnarray}
\end{defi}

\begin{pro}
Let $(A,\triangleright,\triangleleft,\alpha)$ be a Hom-L-dendriform algebra.
\begin{itemize}
\item [$\rm(1)$] The bilinear product $\bullet:A\otimes A\longrightarrow A$ given by
\begin{equation}
x\bullet y=x\triangleright y+x\triangleleft y, \quad \forall x,y \in A,
\end{equation}
define a Hom-pre-Lie algebra. $(A,\bullet,\alpha)$ is called the associated horizontal Hom-pre-Lie algebra of $(A,\triangleright,\triangleleft,\alpha)$ and $(A,\triangleright,\triangleleft,\alpha)$ is called a compatible Hom-L-dendriform algebra structure on the Hom-pre-Lie algebra $(A,\bullet,\alpha)$.
\item[$\rm(2)$] The bilinear product $\cdot:A\otimes A\longrightarrow A$ given by
\begin{equation}
x\cdot y=x\triangleright y-y\triangleleft x, \quad \forall x,y \in A,
\end{equation}
defines a Hom-pre-Lie algebra. $(A,\cdot,\alpha)$ is called the associated vertical Hom-pre-Lie algebra of $(A,\triangleright,\triangleleft,\alpha)$ and $(A,\triangleright,\triangleleft,\alpha)$ is called a compatible Hom-L-dendriform algebra structure on the Hom-pre-Lie algebra $(A,\cdot,\alpha)$.
\item[$\rm(3)$]Both $(A,\bullet,\alpha)$ and $(A,\cdot,\alpha)$ have the same sub-adjacent Hom-Lie algebra $A^C$ defined by
\begin{equation}
[x,y]=x\triangleright y+x\triangleleft y-y\triangleleft x-y\triangleright x, \quad \forall x,y \in A.
\end{equation}
\end{itemize}
\end{pro}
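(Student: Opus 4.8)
The three parts are of unequal difficulty, so I would dispatch part~(3) first, since it uses neither defining relation. Expanding the two candidate brackets gives $[x,y]_\bullet=x\bullet y-y\bullet x=x\triangleright y+x\triangleleft y-y\triangleright x-y\triangleleft x$ and $[x,y]_\cdot=x\cdot y-y\cdot x=(x\triangleright y-y\triangleleft x)-(y\triangleright x-x\triangleleft y)=x\triangleright y+x\triangleleft y-y\triangleright x-y\triangleleft x$, which coincide and equal the asserted bracket. Hence part~(3) follows immediately once parts~(1) and~(2) are established. In each of~(1) and~(2) the multiplicativity $\alpha(x\bullet y)=\alpha(x)\bullet\alpha(y)$ (respectively for $\cdot$) is immediate from $\alpha(x\triangleright y)=\alpha(x)\triangleright\alpha(y)$ and $\alpha(x\triangleleft y)=\alpha(x)\triangleleft\alpha(y)$, so the only real content is the Hom-pre-Lie identity, i.e. the symmetry in $x$ and $y$ of the associator attached to the product in question.

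For part~(1) the plan is to substitute $x\bullet y=x\triangleright y+x\triangleleft y$ into the alternator $(x\bullet y)\bullet\alpha(z)-\alpha(x)\bullet(y\bullet z)-(y\bullet x)\bullet\alpha(z)+\alpha(y)\bullet(x\bullet z)$ and fully expand, producing sixteen terms, then sort them into two vanishing blocks. The first block collects the six terms matching the shape of \eqref{eq:L-1}, namely $(x\triangleright y)\triangleright\alpha(z)+(x\triangleleft y)\triangleright\alpha(z)+\alpha(y)\triangleright(x\triangleright z)-(y\triangleleft x)\triangleright\alpha(z)-(y\triangleright x)\triangleright\alpha(z)-\alpha(x)\triangleright(y\triangleright z)$, which is precisely \eqref{eq:L-1} at $(x,y,z)$ and hence zero. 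The remaining ten terms are exactly \eqref{eq:L-2} at $(x,y,z)$ minus \eqref{eq:L-2} at $(y,x,z)$; the point to notice is that the $\triangleright$-type term $\alpha(x)\triangleright(y\triangleleft z)$ occurring in \eqref{eq:L-2} is what lets these ten terms, several of which are $\triangleleft$-outermost, close up into two copies of the second axiom rather than one. Both blocks vanish, so the associator is symmetric in $x,y$ and~(1) holds.

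The argument for part~(2) follows the same template, and this is where the main difficulty lies. Substituting $x\cdot y=x\triangleright y-y\triangleleft x$ now generates reversed terms such as $\alpha(z)\triangleleft(x\triangleright y)$ and $(y\triangleright z)\triangleleft\alpha(x)$, because the $\triangleleft$ factor of the vertical product swaps its two arguments, so the bookkeeping is more delicate. After expanding $(x\cdot y)\cdot\alpha(z)-\alpha(x)\cdot(y\cdot z)-(y\cdot x)\cdot\alpha(z)+\alpha(y)\cdot(x\cdot z)$, the six terms of the shape of \eqref{eq:L-1} again assemble into \eqref{eq:L-1} at $(x,y,z)$. The delicate step is to recognize that the remaining ten terms coincide with \eqref{eq:L-2} at $(y,z,x)$ minus \eqref{eq:L-2} at $(x,z,y)$: dictated by the reversed terms, one must feed $z$ into the \emph{middle} slot of the second axiom rather than the last. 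Identifying these two correct substitutions is the crux; once they are in hand the cancellation is automatic and~(2) follows.

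In summary, the whole proposition reduces to one instance of \eqref{eq:L-1} together with a difference of two instances of \eqref{eq:L-2} for each of the two products, plus the trivial commutator computation for part~(3). I expect every step except the substitution-matching in~(2) to be entirely routine, and I would take care to write out the ten-term residual explicitly there to make the choice of $(y,z,x)$ and $(x,z,y)$ transparent.
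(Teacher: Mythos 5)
Your proposal is correct: I checked both decompositions, and in each case the sixteen-term alternator does split into one instance of \eqref{eq:L-1} at $(x,y,z)$ plus the stated difference of two instances of \eqref{eq:L-2} (at $(x,y,z)$ and $(y,x,z)$ for the horizontal product, and at $(y,z,x)$ and $(x,z,y)$ for the vertical one), with part (3) following from the commutator computation. The paper gives no details (its proof is ``It is straightforward''), and your direct verification is exactly the intended argument, with the substitution-matching in part (2) worked out correctly.
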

\begin{proof}
It is straightforward.
\end{proof}
\begin{rmk}
Let $(A,\triangleright,\triangleleft,\alpha)$ be a Hom-L-dendriform algebra. Then \eqref{eq:L-1} and \eqref{eq:L-2} can be rewritten as
\begin{eqnarray}
   \ \alpha(x)\triangleright(y\triangleright z)-(x\bullet y)\triangleright \alpha(z)&=&\alpha(y)\triangleright (x\triangleright z)-(y\bullet x)\triangleright \alpha(z),\\
   \ \alpha(x)\triangleright(y\triangleleft z)-(x\triangleright y)\triangleleft \alpha(z)&=&\alpha(y)\triangleleft (x\bullet z)-(y\triangleleft x)\triangleleft\alpha(z).
\end{eqnarray}
\end{rmk}

\begin{pro}\label{rep}
Let $A$ be a vector space with two bilinear products $\triangleright,\triangleleft:A\otimes A\longrightarrow A$.
\begin{itemize}
\item [$\rm(1)$] $(A,\triangleright,\triangleleft,\alpha)$ is a Hom-L-dendriform algebra if and only if $(A,\bullet,\alpha)$ is a Hom-pre-Lie algebra and  $(A,\alpha,L_\triangleright,R_\triangleleft)$ is a representation of $(A,\bullet,\alpha)$.
\item[$\rm(2)$]$(A,\triangleright,\triangleleft,\alpha)$ is a Hom-L-dendriform algebra if and only if $(A,\cdot,\alpha)$ is a Hom-pre-Lie algebra and $(A,\alpha,L_\triangleright,-L_\triangleleft)$ is a representation of $(A,\cdot,\alpha)$.
\end{itemize}
\end{pro}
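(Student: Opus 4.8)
The plan is to unfold the right-hand hypothesis of each equivalence into the four conditions packaged in Definition \ref{defi:hom-pre representation} and to match them one by one against the two defining identities \eqref{eq:L-1} and \eqref{eq:L-2} of a Hom-L-dendriform algebra, using the reformulations recorded in the preceding Remark. Throughout I write $L_\triangleright(x)y=x\triangleright y$, $R_\triangleleft(x)y=y\triangleleft x$ and $L_\triangleleft(x)y=x\triangleleft y$, so that the operators $\rho$ and $\mu$ of the candidate representations act by these.

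For part $\rm(1)$ I first note that the two $\alpha$-intertwining conditions \eqref{hom-lie-rep-1} and \eqref{rep-1}, applied to $\rho=L_\triangleright$, $\mu=R_\triangleleft$ and $\beta=\alpha$, say precisely that $\alpha(x\triangleright y)=\alpha(x)\triangleright\alpha(y)$ and $\alpha(x\triangleleft y)=\alpha(x)\triangleleft\alpha(y)$, i.e.\ that $\alpha$ is multiplicative for $\triangleright$ and $\triangleleft$, equivalently for $\bullet$. Next, with the sub-adjacent bracket $[x,y]=x\bullet y-y\bullet x$, condition \eqref{hom-lie-rep-2} for $\rho=L_\triangleright$ rearranges to the first reformulation in the Remark, hence is equivalent to \eqref{eq:L-1}. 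Finally, writing out the mixed condition \eqref{rep-2} for $\mu=R_\triangleleft$ and $\rho=L_\triangleright$ --- where the product inside $\mu(x\bullet y)$ is the horizontal product $\bullet$ --- and relabelling the free vector, one lands exactly on the second reformulation in the Remark, hence on \eqref{eq:L-2}. This completes the dictionary. In the ``only if'' direction I additionally invoke the preceding Proposition to supply the Hom-pre-Lie structure on $(A,\bullet,\alpha)$; in the ``if'' direction the same dictionary recovers \eqref{eq:L-1}, \eqref{eq:L-2} and multiplicativity, so $(A,\triangleright,\triangleleft,\alpha)$ is a Hom-L-dendriform algebra.

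Part $\rm(2)$ proceeds identically with $\rho=L_\triangleright$, $\mu=-L_\triangleleft$ and the vertical product $x\cdot y=x\triangleright y-y\triangleleft x$. Conditions \eqref{hom-lie-rep-1} and \eqref{rep-1} again yield multiplicativity of $\triangleright$ and $\triangleleft$, and \eqref{hom-lie-rep-2} for $L_\triangleright$ reproduces \eqref{eq:L-1} verbatim, since by part $\rm(3)$ of the preceding Proposition the sub-adjacent bracket of $\cdot$ agrees with that of $\bullet$. The only genuinely different computation is \eqref{rep-2}: here $\mu(x\cdot y)$ carries the vertical product, and the sign in $\mu=-L_\triangleleft$ combines with the term $-\,y\triangleleft x$ of $x\cdot y$ so that, after using $\alpha(y)\triangleleft(x\triangleright z)+\alpha(y)\triangleleft(x\triangleleft z)=\alpha(y)\triangleleft(x\bullet z)$, the identity again reduces to the second reformulation of the Remark, i.e.\ to \eqref{eq:L-2}.

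All of these are routine once the operators are expanded, so I expect no conceptual obstacle; the single point demanding care is the mixed axiom \eqref{rep-2}, where one must track both which product (horizontal $\bullet$ or vertical $\cdot$) sits inside $\mu(x\bullet y)$ resp.\ $\mu(x\cdot y)$, and the sign carried by $\mu$ (namely $+R_\triangleleft$ in part $\rm(1)$ but $-L_\triangleleft$ in part $\rm(2)$). Handling these two bookkeeping points correctly is exactly what makes \eqref{rep-2} collapse onto the single identity \eqref{eq:L-2} in both cases.
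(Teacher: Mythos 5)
Your proposal is correct and follows essentially the same route as the paper: expand the representation axioms \eqref{hom-lie-rep-1}, \eqref{hom-lie-rep-2}, \eqref{rep-1}, \eqref{rep-2} for $(\rho,\mu)=(L_\triangleright,R_\triangleleft)$ resp.\ $(L_\triangleright,-L_\triangleleft)$ and match them term by term with multiplicativity of $\alpha$ and the identities \eqref{eq:L-1}, \eqref{eq:L-2}. The paper only writes out the forward direction of part $\rm(1)$ and declares the converse and part $\rm(2)$ similar; your dictionary (including the sign bookkeeping for $\mu=-L_\triangleleft$ and the observation that both products have the same sub-adjacent bracket) is exactly the omitted content.
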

\begin{proof}
We only prove the condition $\rm(1)$. If $(A,\triangleright,\triangleleft,\alpha)$ is a Hom-L-dendriform algebra, then for all $x,y\in A$, we have
\begin{equation}
L_\triangleright(\alpha(x))\alpha(y)=\alpha(x)\triangleright\alpha(y)=\alpha(x\triangleright y)=\alpha(L_\triangleright (x)y),
\end{equation}
which implies that $L_\triangleright(\alpha(x))\circ \alpha=\alpha\circ L_\triangleright(x)$.
Similarly, we have $R_\triangleleft(\alpha(x))\circ \alpha=\alpha\circ R_\triangleleft(x)$.

For all $x,y,z\in A$, by \eqref{eq:L-1}, we have
\begin{eqnarray*}&&L_\triangleright([x,y])\alpha(z)-L_\triangleright(\alpha(x))L_\triangleright(y)z+L_\triangleright(\alpha(y))L_\triangleright(x)z\\
&=& [x,y]\triangleright \alpha(z)-\alpha(x)\triangleright(y\triangleright z)+\alpha(y)\triangleright(x\triangleright z) \\
&=& (x\triangleright y)\triangleright\alpha(z)+(x\triangleleft y)\triangleright\alpha(z)-(y\triangleright x)\triangleright\alpha(z)-(y\triangleleft x)\triangleright\alpha(z)-\alpha(x)\triangleright(y\triangleright z)+\alpha(y)\triangleright(x\triangleright z) \\
&=&0,
\end{eqnarray*}
which implies that $$L_\triangleright([x,y])\circ\alpha=L_\triangleright(\alpha(x))L_\triangleright(y)-L_\triangleright(\alpha(y))L_\triangleright(x).$$ Similarly, we have $$R_\triangleleft(\alpha(y))\circ R_\triangleleft(x)-R_\triangleleft(x\bullet y)\circ\alpha=R_\triangleleft(\alpha(y))\circ L_\triangleright(x)-L_\triangleright(\alpha(x))\circ R_\triangleleft(y).$$
Thus $(A,\alpha,L_\triangleright,R_\triangleleft)$ is a representation of the Hom-pre-Lie algebra $(A,\bullet,\alpha)$.
The converse part can be proved similarly. We omit details. The proof is finished.
\end{proof}

\begin{pro}
Let $(A,\triangleright,\triangleleft,\alpha)$ be a Hom-L-dendriform algebra. Define two bilinear products $\triangleright^t,\triangleleft^t:A\otimes A\longrightarrow A$ by
\begin{equation}\label{transpose}
x\triangleright^t y=x\triangleright y, \quad x\triangleleft^ty=-y\triangleleft x,  \quad \forall x,y \in A.
\end{equation}
Then $(A,\triangleright^t,\triangleleft^t,\alpha)$ is a Hom-L-dendriform algebra. The associated horizontal Hom-pre-Lie algebra of $(A,\triangleright^t,\triangleleft^t,\alpha)$ is the associated vertical Hom-pre-Lie algebra $(A,\cdot,\alpha)$ of $(A,\triangleright,\triangleleft,\alpha)$ and  the associated vertical Hom-pre-Lie algebra of $(A,\triangleright^t,\triangleleft^t,\alpha)$ is the associated horizontal Hom-pre-Lie algebra $(A,\bullet,\alpha)$ of $(A,\triangleright,\triangleleft,\alpha)$, that is, $$\bullet^t=\cdot,\quad \cdot^t=\bullet.$$
\end{pro}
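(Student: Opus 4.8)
The plan is to verify the three assertions by direct substitution of the definitions \eqref{transpose}, reducing each axiom for $(A,\triangleright^t,\triangleleft^t,\alpha)$ to an axiom already known to hold for $(A,\triangleright,\triangleleft,\alpha)$. There is nothing conceptual here beyond careful sign bookkeeping, since $\triangleleft^t$ carries a sign and interchanges its two arguments ($x\triangleleft^t y=-y\triangleleft x$) while $\triangleright^t=\triangleright$ is unchanged.

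First I would check multiplicativity of the new products with respect to $\alpha$. Since $\alpha$ is multiplicative for $\triangleright$ we get $\alpha(x\triangleright^t y)=\alpha(x\triangleright y)=\alpha(x)\triangleright\alpha(y)=\alpha(x)\triangleright^t\alpha(y)$, and likewise $\alpha(x\triangleleft^t y)=-\alpha(y\triangleleft x)=-\alpha(y)\triangleleft\alpha(x)=\alpha(x)\triangleleft^t\alpha(y)$. Next, for \eqref{eq:L-1} written for $\triangleright^t,\triangleleft^t$, the only summands affected by the substitution are the two containing $\triangleleft^t$: the term $(x\triangleleft^t y)\triangleright^t\alpha(z)$ becomes $-(y\triangleleft x)\triangleright\alpha(z)$ and the term $-(y\triangleleft^t x)\triangleright^t\alpha(z)$ becomes $+(x\triangleleft y)\triangleright\alpha(z)$. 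The net effect is merely to interchange these two summands, so the whole expression is term-for-term identical to the original \eqref{eq:L-1} and therefore vanishes.

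The identity \eqref{eq:L-2} is the one place that requires attention, because $\triangleleft^t$ occurs both in a leading position and in nested positions, so several signs must be tracked at once. After substituting every occurrence of $\triangleleft^t$ and the single $\triangleright^t$, the left-hand side of \eqref{eq:L-2} for $(A,\triangleright^t,\triangleleft^t,\alpha)$ evaluated at $(x,y,z)$ equals, up to an overall sign, the left-hand side of the original \eqref{eq:L-2} evaluated at $(x,z,y)$, i.e. with the last two arguments interchanged. Since \eqref{eq:L-2} holds for $(A,\triangleright,\triangleleft,\alpha)$ for all triples, it holds in particular at $(x,z,y)$, and this yields \eqref{eq:L-2} for the transpose. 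Recognizing this permutation of the arguments is the main (and only mildly delicate) step of the proof.

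Finally, the identifications of the associated products are one-line computations: $x\bullet^t y=x\triangleright^t y+x\triangleleft^t y=x\triangleright y-y\triangleleft x=x\cdot y$ and $x\cdot^t y=x\triangleright^t y-y\triangleleft^t x=x\triangleright y+x\triangleleft y=x\bullet y$, so that $\bullet^t=\cdot$ and $\cdot^t=\bullet$, as claimed.
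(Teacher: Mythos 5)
Your verification is correct: the transposed identity \eqref{eq:L-1} is a term reordering of the original, the transposed \eqref{eq:L-2} at $(x,y,z)$ is indeed $-1$ times the original \eqref{eq:L-2} at $(x,z,y)$, and the computations of $\bullet^t$ and $\cdot^t$ check out. The paper dismisses the proof as straightforward, and your direct substitution is precisely the intended argument with the details filled in.
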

\begin{proof}
It is straightforward.
\end{proof}
\begin{defi}
 Let $(A,\triangleright,\triangleleft,\alpha)$ be a Hom-L-dendriform algebra. The Hom-L-dendriform algebra $(A,\triangleright^t,\triangleleft^t,\alpha)$ given by \eqref{transpose} is called the transpose of $(A,\triangleright,\triangleleft,\alpha)$.
\end{defi}
For brevity, we only give the study of the vertical Hom-pre-Lie algebra.
\begin{thm}\label{O-operator-L}
Let $(A,\cdot,\alpha)$ be a Hom-pre-Lie algebra and $(V,\beta,\rho,\mu)$ be a representation of $(A,\cdot,\alpha)$. Suppose that $T:V\longrightarrow A$ is a Hom-$\huaO$-operator. Then there exists a Hom-L-dendriform algebra structure on $V$ defined by
\begin{equation}
u\triangleright v=\rho(T(\beta^{-1}(u)))v, \quad u\triangleleft v=-\mu(T(\beta^{-1}(u)))v,  \quad \forall u,v \in V.
\end{equation}
\end{thm}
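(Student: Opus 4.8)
The plan is to deduce the statement from Proposition \ref{rep}(2). Write $\rho_V:=\rho\circ T\circ\beta^{-1}$ and $\mu_V:=\mu\circ T\circ\beta^{-1}$, so that $u\triangleright v=\rho_V(u)v$ and $u\triangleleft v=-\mu_V(u)v$; in the language of that proposition this says $L_\triangleright=\rho_V$ and $-L_\triangleleft=\mu_V$. The vertical product attached to $(\triangleright,\triangleleft)$ is $u\ast v:=u\triangleright v-v\triangleleft u=\rho_V(u)v+\mu_V(v)u$. By Proposition \ref{rep}(2) it therefore suffices to prove (a) that $(V,\ast,\beta)$ is a Hom-pre-Lie algebra and (b) that $(V,\beta,\rho_V,\mu_V)$ is a representation of $(V,\ast,\beta)$. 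Throughout I abbreviate $a=T(\beta^{-1}(u))$, $b=T(\beta^{-1}(v))$, $c=T(\beta^{-1}(w))$. From \eqref{eq:opreator-1} one gets $T\circ\beta^{-1}=\alpha^{-1}\circ T$, whence $T(u)=\alpha(a)$, and from \eqref{eq:operator-2} together with the multiplicativity of $\alpha$ one obtains the key collapse identity $T(\beta^{-1}(u\ast v))=a\cdot b$ (equivalently $T(u\ast v)=T(u)\cdot T(v)$ and $T([u,v]_\ast)=[T(u),T(v)]$ for the commutator $[u,v]_\ast=u\ast v-v\ast u$). This identity is exactly what makes the nested occurrences of $T$ disappear and reduces everything to the representation axioms of $(V,\beta,\rho,\mu)$.

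For (a), I would expand the associator $(u\ast v)\ast\beta(w)-\beta(u)\ast(v\ast w)$ using the definition of $\ast$ and the collapse identity; the result is linear in each of $u,v,w$. Grouping the terms according to which of $u,v,w$ is acted on, the coefficient of $w$ is $\rho(a\cdot b)\circ\beta-\rho(\alpha(a))\circ\rho(b)$, the coefficient of $v$ is $\mu(\alpha(c))\circ\rho(a)-\rho(\alpha(a))\circ\mu(c)$, and the coefficient of $u$ is $\mu(\alpha(c))\circ\mu(b)-\mu(b\cdot c)\circ\beta$. Subtracting the same expression with $u$ and $v$ interchanged (i.e. $a\leftrightarrow b$), the $w$-terms combine into $\rho([a,b])\circ\beta-\big(\rho(\alpha(a))\circ\rho(b)-\rho(\alpha(b))\circ\rho(a)\big)$, which vanishes by \eqref{hom-lie-rep-2}, while the $v$- and $u$-terms each vanish by \eqref{rep-2} (with $(x,y)=(a,c)$ and $(x,y)=(b,c)$ respectively). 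This gives the Hom-pre-Lie identity for $\ast$; the multiplicativity $\beta(u\ast v)=\beta(u)\ast\beta(v)$ follows at once from \eqref{hom-lie-rep-1} and \eqref{rep-1} together with $\alpha\circ T=T\circ\beta$.

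For (b), the axioms of Definition \ref{defi:hom-pre representation} for $(V,\beta,\rho_V,\mu_V)$ over $(V,\ast,\beta)$ translate, after substituting $\rho_V,\mu_V$ and using $T(u)=\alpha(a)$ and the collapse identity, into the four representation identities of $(V,\beta,\rho,\mu)$: the compatibilities $\rho_V(\beta(u))\circ\beta=\beta\circ\rho_V(u)$ and $\beta\circ\mu_V(u)=\mu_V(\beta(u))\circ\beta$ reduce to \eqref{hom-lie-rep-1} and \eqref{rep-1} (and simultaneously encode the multiplicativity of $\triangleright$ and $\triangleleft$); the condition that $\rho_V$ represent the sub-adjacent Hom-Lie algebra $(V,\ast,\beta)^C$ reduces, via $T([u,v]_\ast)=[T(u),T(v)]$, to $\rho([a,b])\circ\beta=\rho(\alpha(a))\circ\rho(b)-\rho(\alpha(b))\circ\rho(a)$, i.e. \eqref{hom-lie-rep-2}; and the mixed condition for $(\rho_V,\mu_V)$ reduces directly to \eqref{rep-2} for $(\rho,\mu)$ with $x=a$, $y=c$. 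With (a) and (b) in hand, Proposition \ref{rep}(2) yields that $(V,\triangleright,\triangleleft,\beta)$ is a Hom-L-dendriform algebra. The only real obstacle is bookkeeping: one must track the $\alpha^{\pm1}$- and $\beta^{\pm1}$-twists consistently and apply the collapse identity $T(\beta^{-1}(u\ast v))=a\cdot b$ at the right moment, so that the second-level applications of $T$ become the product $a\cdot b$ in $A$; once this is arranged, every line matches a representation axiom of $(V,\beta,\rho,\mu)$ verbatim.
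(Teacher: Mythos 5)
Your proof is correct, but it takes a different route from the paper. The paper proves the theorem by brute force: it expands the left-hand sides of \eqref{eq:L-1} and \eqref{eq:L-2} directly in terms of $\rho$ and $\mu$, applies \eqref{eq:operator-2} to collapse the nested occurrences of $T$ into products in $A$, and then invokes \eqref{hom-lie-rep-2} and \eqref{rep-2} to get zero. You instead factor the verification through Proposition \ref{rep}(2): you first show that the vertical product $u\ast v=\rho_V(u)v+\mu_V(v)u$ makes $(V,\ast,\beta)$ a Hom-pre-Lie algebra and that $(V,\beta,\rho_V,\mu_V)=(V,\beta,L_\triangleright,-L_\triangleleft)$ is a representation of it, then quote the equivalence. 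I checked your bookkeeping: the collapse identity $T(u\ast v)=T(u)\cdot T(v)$ is exactly \eqref{eq:operator-2}, its $\beta^{-1}$-twisted form follows from \eqref{eq:opreator-1} and the multiplicativity of the (regular) $\alpha$, and your assignment of which axiom kills which coefficient ($w$-terms by \eqref{hom-lie-rep-2}, the combined $v$- and $u$-terms by \eqref{rep-2} with $(x,y)=(a,c)$ and $(b,c)$) is right; part (b) likewise reduces verbatim to \eqref{hom-lie-rep-1}, \eqref{hom-lie-rep-2}, \eqref{rep-1}, \eqref{rep-2}. Two remarks on what each approach buys. Your route is more modular and exposes the mechanism clearly (the induced Hom-pre-Lie structure on $V$ and the fact that $T$ intertwines it with $\cdot$, which is essentially the content of Corollary \ref{O-operator-L1}, emerge as intermediate results rather than afterthoughts); the computational content of your step (a) is in any case close to the paper's direct check of \eqref{eq:L-1}. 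The one caveat is that the direction of Proposition \ref{rep}(2) you need — representation data implies Hom-L-dendriform — is precisely the ``converse part'' whose proof the paper omits with ``can be proved similarly,'' so your argument silently leans on that omitted verification; a fully self-contained write-up would either supply it or revert to the paper's direct expansion of \eqref{eq:L-1} and \eqref{eq:L-2}.
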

\begin{proof}
First by $T\circ \beta=\alpha\circ T$ and $(V,\beta,\rho,\mu)$ is a representation of $(A,\cdot,\alpha)$, for all $u,v\in V$ we have
\begin{equation}
\beta(u\triangleright v)=\beta(\rho(T(\beta^{-1}(u)))v)=\rho(\alpha(T(\beta^{-1}(u))))\beta(v)=\rho(T(u))\beta(v)=\beta(u)\triangleright\beta(v).
\end{equation}
Similarly, we have $\beta(u\triangleleft v)=\beta(u)\triangleleft\beta(v).$ Furthermore, by \eqref{eq:operator-2} and $(V,\beta,\rho,\mu)$ being  a representation of $(A,\cdot,\alpha)$, for all $u,v,w\in V$, we have
\begin{eqnarray*}&&(u\triangleright v)\triangleright\beta(w)+(u\triangleleft v)\triangleright\beta(w)+\beta(v)\triangleright(u\triangleright w)-(v\triangleleft u)\triangleright\beta(w)-(v\triangleright u)\triangleright\beta(w)-\beta(u)\triangleright(v\triangleright w)\\
&=& \rho(T(\beta^{-1}(u)))v\triangleright\beta(w)-\mu(T(\beta^{-1}(u)))v\triangleright\beta(w)+\beta(v)\triangleright \rho(T(\beta^{-1}(u)))w\\
&&+\mu(T(\beta^{-1}(v)))u\triangleright\beta(w)-\rho(T(\beta^{-1}(v)))u\triangleright\beta(w)-\beta(u)\triangleright\rho(T(\beta^{-1}(v)))w\\
&=& \rho(T(\beta^{-1}(\rho(T(\beta^{-1}(u)))v)))\beta(w)-\rho(T(\beta^{-1}(\mu(T(\beta^{-1}(u)))v)))\beta(w)+\rho(T(v))\rho(T(\beta^{-1}(u)))w \\
&&+\rho(T(\beta^{-1}(\mu(T(\beta^{-1}(v)))u)))\beta(w)-\rho(T(\beta^{-1}(\rho(T(\beta^{-1}(v)))u)))\beta(w)-\rho(T(u))\rho(T(\beta^{-1}(v)))w\\
&=&\rho(\alpha^{-1}(T(u)\cdot T(v)))\beta(w)-\rho(\alpha^{-1}(T(v)\cdot T(u)))\beta(w)+\rho(T(v))\rho(T(\beta^{-1}(u)))w-\rho(T(u))\rho(T(\beta^{-1}(v)))w\\
&=&0,
\end{eqnarray*}
which implies that \eqref{eq:L-1} holds.

Similarly, we have
\begin{equation}
(u\triangleright v)\triangleleft\beta(w)+\beta(v)\triangleleft(u\triangleright w)+\beta(v)\triangleleft(u\triangleleft w)-(v\triangleleft u)\triangleleft\beta(w)-\beta(u)\triangleright(v\triangleleft w)=0,
\end{equation}
which implies that \eqref{eq:L-2} holds. This finishes the proof.
\end{proof}

\begin{cor}\label{O-operator-L1}
With the above conditions. $T$ is a homomorphism  from the associated vertical Hom-pre-Lie algebra of $(V,\triangleright,\triangleleft,\beta)$ to Hom-pre-Lie algebra $(A,\cdot,\alpha)$. Moreover, $T(V)=\{T(u)|u\in V\}\subset A$ is a Hom-pre-Lie subalgebra of $(A,\cdot,\alpha)$ and there is an induced Hom-L-dendriform algebra structure on $T(V)$ given by
\begin{equation}
T(u)\triangleright T(v)=T(u\triangleright v), \quad T(u)\triangleleft T(v)=T(u\triangleleft v),  \quad \forall u,v \in V.
\end{equation}
\end{cor}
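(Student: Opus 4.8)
The plan is to extract everything from the two $\huaO$-operator axioms \eqref{eq:opreator-1} and \eqref{eq:operator-2} together with Theorem \ref{O-operator-L}. By that theorem $(V,\triangleright,\triangleleft,\beta)$ is a Hom-L-dendriform algebra, so its associated vertical Hom-pre-Lie algebra carries the product $u\ast v:=u\triangleright v-v\triangleleft u=\rho(T(\beta^{-1}(u)))v+\mu(T(\beta^{-1}(v)))u$. With this rewriting the homomorphism claim is immediate: \eqref{homo-2} is literally the axiom \eqref{eq:opreator-1}, while \eqref{homo-1} becomes $T(u\ast v)=T\big(\rho(T(\beta^{-1}(u)))v+\mu(T(\beta^{-1}(v)))u\big)=T(u)\cdot T(v)$, which is exactly \eqref{eq:operator-2}. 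So $T$ is a morphism from the vertical Hom-pre-Lie algebra of $(V,\triangleright,\triangleleft,\beta)$ to $(A,\cdot,\alpha)$.

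Granting this, $T(V)$ is a Hom-pre-Lie subalgebra almost for free: it is closed under $\alpha$ since $\alpha(T(u))=T(\beta(u))$ by \eqref{eq:opreator-1}, and closed under $\cdot$ since $T(u)\cdot T(v)=T(u\ast v)$ lies in $T(V)$ by the homomorphism property just proved.

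The one genuinely delicate point, and what I expect to be the main obstacle, is the well-definedness of the induced operations $T(u)\triangleright T(v):=T(u\triangleright v)$ and $T(u)\triangleleft T(v):=T(u\triangleleft v)$, since $T$ need not be injective and the right-hand sides a priori depend on the chosen preimages. I would settle this by proving that $\Ker T$ is a two-sided dendriform ideal of $(V,\triangleright,\triangleleft,\beta)$. The inclusions $\Ker T\triangleright V,\ \Ker T\triangleleft V\subseteq\Ker T$ are trivial: if $w\in\Ker T$ then $T(\beta^{-1}(w))=\alpha^{-1}(T(w))=0$, whence $w\triangleright v=\rho(0)v=0$ and $w\triangleleft v=-\mu(0)v=0$. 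The two remaining inclusions are the crux, and this is exactly where the $\huaO$-operator equation does the work: feeding an element $v\in\Ker T$ into the \emph{second} slot of \eqref{eq:operator-2} kills the $\mu$-term (since $T(\beta^{-1}(v))=0$) and forces $T(\rho(T(\beta^{-1}(u)))v)=T(u\triangleright v)=0$, so $V\triangleright\Ker T\subseteq\Ker T$; feeding $v\in\Ker T$ into the \emph{first} slot kills the $\rho$-term and forces $T(\mu(T(\beta^{-1}(u)))v)=0$, i.e. $T(u\triangleleft v)=0$, so $V\triangleleft\Ker T\subseteq\Ker T$.

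With $\Ker T$ an ideal for both products, well-definedness follows from the bilinear splitting $u\triangleright v-u'\triangleright v'=(u-u')\triangleright v+u'\triangleright(v-v')$ (and its $\triangleleft$ analogue), both summands lying in $\Ker T$ whenever $u-u',v-v'\in\Ker T$. Finally I would note that the induced structure is automatically a Hom-L-dendriform algebra, being the push-forward of $(V,\triangleright,\triangleleft,\beta)$ along the surjection $T$: the defining identities \eqref{eq:L-1}--\eqref{eq:L-2} and the multiplicativity of $\alpha$ descend verbatim (using $\alpha\circ T=T\circ\beta$), and the one-line check $T(u)\triangleright T(v)-T(v)\triangleleft T(u)=T(u\ast v)=T(u)\cdot T(v)$ confirms that the associated vertical Hom-pre-Lie algebra of this induced structure is indeed $(T(V),\cdot,\alpha)$.
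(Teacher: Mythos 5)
Your argument is correct, and it is in fact more than the paper supplies: the paper states this corollary with no proof at all, treating it as an immediate consequence of Theorem \ref{O-operator-L}. The homomorphism claim and the subalgebra claim are indeed one-line consequences of \eqref{eq:opreator-1} and \eqref{eq:operator-2}, exactly as you write: the vertical product on $V$ is $u\triangleright v-v\triangleleft u=\rho(T(\beta^{-1}(u)))v+\mu(T(\beta^{-1}(v)))u$, whose image under $T$ is $T(u)\cdot T(v)$ by \eqref{eq:operator-2}. The genuinely nontrivial point you identify --- well-definedness of $T(u)\triangleright T(v):=T(u\triangleright v)$ when $T$ is not injective --- is real and is silently assumed by the paper. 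Your resolution is right: $\Ker T\triangleright V$ and $\Ker T\triangleleft V$ land in $\Ker T$ because $T(\beta^{-1}(w))=\alpha^{-1}(T(w))=0$ kills the operator acting, while $V\triangleright\Ker T\subseteq\Ker T$ and $V\triangleleft\Ker T\subseteq\Ker T$ follow from \eqref{eq:operator-2} with one argument in $\Ker T$, since the other summand on the right vanishes and the left side is $0$. The descent of \eqref{eq:L-1}--\eqref{eq:L-2} and of the multiplicativity of $\alpha$ along the surjection $T:V\to T(V)$ is then routine, using $\alpha\circ T=T\circ\beta$. So your proposal is a complete and correct proof; its only divergence from the paper is that the paper offers nothing to diverge from.
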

\begin{thm}\label{Operator-L}
Let $(A,\cdot,\alpha)$ be a Hom-pre-Lie algebra. Then there exists a compatible Hom-L-dendriform algebra structure on $(A,\cdot,\alpha)$ such that $(A,\cdot,\alpha)$ is the associated vertical Hom-pre-Lie algebra if and only if there exists an invertible Hom-$\huaO$-operator $T$ associated to a representation $(V,\beta,\rho,\mu)$.
\end{thm}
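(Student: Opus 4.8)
The plan is to prove the two implications separately, using throughout that $\alpha$ (hence $\beta$, in the auxiliary data) is invertible, and choosing the auxiliary representation to be the tautological one built from the Hom-L-dendriform products. The converse direction is essentially a repackaging of Theorem \ref{O-operator-L} and Corollary \ref{O-operator-L1}; the forward direction requires exhibiting a concrete invertible Hom-$\huaO$-operator, and the only delicate point is getting the twist right.

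For the direction $(\Leftarrow)$, suppose $T:V\longrightarrow A$ is an invertible Hom-$\huaO$-operator associated to $(V,\beta,\rho,\mu)$. First I would invoke Theorem \ref{O-operator-L} to obtain a Hom-L-dendriform algebra $(V,\triangleright,\triangleleft,\beta)$, and Corollary \ref{O-operator-L1} to see that $T$ is a homomorphism from the associated vertical Hom-pre-Lie algebra $(V,\cdot_V,\beta)$ to $(A,\cdot,\alpha)$. Since $T$ is bijective it is an isomorphism, and because \eqref{eq:opreator-1} yields $\alpha=T\circ\beta\circ T^{-1}$, transport of structure along $T$ produces a Hom-L-dendriform algebra on $A$ twisted by $\alpha$, namely
\begin{equation}
x\triangleright_A y=T\big(T^{-1}(x)\triangleright T^{-1}(y)\big),\qquad x\triangleleft_A y=T\big(T^{-1}(x)\triangleleft T^{-1}(y)\big).
\end{equation}
Its associated vertical product is then $x\triangleright_A y-y\triangleleft_A x=T\big(T^{-1}(x)\cdot_V T^{-1}(y)\big)=x\cdot y$, the last equality because $T$ is a homomorphism. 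Hence $(A,\cdot,\alpha)$ is exactly the associated vertical Hom-pre-Lie algebra of this compatible structure.

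For the direction $(\Rightarrow)$, suppose $(A,\triangleright,\triangleleft,\alpha)$ is a compatible Hom-L-dendriform algebra whose associated vertical Hom-pre-Lie algebra is $(A,\cdot,\alpha)$. By Proposition \ref{rep}(2), $(A,\alpha,L_\triangleright,-L_\triangleleft)$ is a representation of $(A,\cdot,\alpha)$, so I would take $V=A$, $\beta=\alpha$, $\rho=L_\triangleright$, $\mu=-L_\triangleleft$, and set $T=\alpha$, which is invertible by regularity. Then \eqref{eq:opreator-1} is immediate, and for \eqref{eq:operator-2} one computes $T(\beta^{-1}(u))=u$, so the right-hand side becomes $\alpha\big(L_\triangleright(u)v-L_\triangleleft(v)u\big)=\alpha(u\triangleright v-v\triangleleft u)=\alpha(u\cdot v)$, which matches $T(u)\cdot T(v)=\alpha(u)\cdot\alpha(v)=\alpha(u\cdot v)$. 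Thus $T=\alpha$ is the desired invertible Hom-$\huaO$-operator.

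The main obstacle, and the only genuinely Hom-theoretic subtlety, is the twist bookkeeping: the naive choice $T=\Id$ fails because the $\beta^{-1}$ in \eqref{eq:operator-2} introduces spurious factors of $\alpha^{-1}$, and it is precisely the choice $T=\alpha$ (respectively the relation $\alpha=T\circ\beta\circ T^{-1}$ in the converse) that absorbs these factors so that both sides of \eqref{eq:operator-2} acquire a matching outer $\alpha$. Once this is handled, both directions reduce to the already-established Proposition \ref{rep}, Theorem \ref{O-operator-L}, Corollary \ref{O-operator-L1}, and the routine fact that Hom-L-dendriform structures transport along twist-equivariant linear isomorphisms.
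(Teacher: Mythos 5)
Your proposal is correct and follows essentially the same route as the paper: the forward direction takes $V=A$, $\beta=\alpha$, $\rho=L_\triangleright$, $\mu=-L_\triangleleft$ and $T=\alpha$ (exactly the paper's choice, via Proposition \ref{rep}), and the converse transports the structure from Theorem \ref{O-operator-L} along the invertible $T$ using Corollary \ref{O-operator-L1}, which yields the same explicit formulas $x\triangleright y=T(\rho(\alpha^{-1}(x))T^{-1}(y))$, $x\triangleleft y=-T(\mu(\alpha^{-1}(x))T^{-1}(y))$ that the paper writes down. Your remark about why $T=\Id$ fails and $T=\alpha$ is forced is a correct observation that the paper leaves implicit.
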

\begin{proof}
Let $T$ be an invertible Hom-$\huaO$-operator $T$ associated to a representation $(V,\beta,\rho,\mu)$. By Theorem \ref{O-operator-L}, Corollary \ref{O-operator-L1} and \eqref{eq:opreator-1}, there exists a Hom-L-dendriform algebra on $T(V)$ given by
\begin{equation}
x\triangleright y=T(\rho(T(\beta^{-1}(u)))T^{-1}(y))=T(\rho(\alpha^{-1}(x))T^{-1}(y)).
\end{equation}
Similarly, we have $x\triangleleft y=-T(\mu(\alpha^{-1}(x))T^{-1}(y)).$

Moreover, by \eqref{eq:operator-2}, we have
\begin{equation}
x\triangleright y-y\triangleleft x=T(\rho(\alpha^{-1}(x))T^{-1}(y)+\mu(\alpha^{-1}(y))T^{-1}(x))=x\cdot y.
\end{equation}
Conversely, Let $(A,\triangleright,\triangleleft,\alpha)$ be a Hom-L-dendriform algebra and $(A,\cdot,\alpha)$ be the associated vertical Hom-pre-Lie algebra. By Theorem \ref{rep}, $(A,\alpha,L_\triangleright,-L_\triangleleft)$ is a representation of $(A,\cdot,\alpha)$ and $\alpha:A\longrightarrow A$ is a Hom-$\huaO$-operator of $(A,\cdot,\alpha)$ associated to $(A,\alpha,L_\triangleright,-L_\triangleleft)$.
\end{proof}

In the sequel, we give the relation between  Hessian structure and Hom-L-dendriform algebras.
\begin{defi}{\rm(\cite{LSS})}
A Hessian structure on a regular Hom-pre-Lie algebra $(A,\cdot,\alpha)$ is a symmetric nondegenerate $2$-cocycle $\huaB \in Sym^2(A^\ast)$, i.e. $\partial_T\huaB=0$, satisfying $\huaB \circ (\alpha\otimes\alpha)=\huaB$.  More precisely,
\begin{eqnarray}
  \label{hom-hessian-1}\huaB(\alpha(x),\alpha(y))&=&\huaB(x,y),\\
  \label{hom-hessian-2}\huaB(x\cdot y,\alpha(z))-\huaB(\alpha(x),y\cdot z)&=&\huaB(y\cdot x,\alpha(z))-\huaB(\alpha(y),x\cdot z),\quad \forall x,y,z\in A.
\end{eqnarray}
\end{defi}
Let $A$ be a vector space, for all $\huaB \in Sym^2(A^\ast)$, the linear map $\huaB^\sharp:A \longrightarrow A^\ast$ is given by
\begin{equation}\label{hessian}
\langle \huaB^\sharp(x),y\rangle=\huaB(x,y),\quad \forall x,y\in A.
\end{equation}
\begin{thm}
Let $(A,\cdot,\alpha)$ be a Hom-pre-Lie algebra with a Hessian structure $\huaB$. Then there exists a compatible Hom-L-dendriform algebra structure on $(A,\cdot,\alpha)$ given by
\begin{equation}
\huaB(x\triangleright y,z)=-\huaB(y,[\alpha^{-1}(x),\alpha^{-2}(z)]), \quad \huaB(x\triangleleft y,z)=-\huaB(y,\alpha^{-2}(z)\cdot\alpha^{-1}(x)),  \quad \forall x,y,z\in A.
\end{equation}
\end{thm}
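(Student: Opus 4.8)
The plan is to manufacture an invertible Hom-$\huaO$-operator out of $\huaB$ and then feed it into Theorem \ref{Operator-L}. Since $\huaB$ is nondegenerate and symmetric, the map $\huaB^\sharp\colon A\to A^\ast$ of \eqref{hessian} is a linear isomorphism, and the invariance \eqref{hom-hessian-1} is equivalent to $\huaB^\sharp\circ\alpha=(\alpha^{-1})^\ast\circ\huaB^\sharp$. I set
\begin{equation}
T:=\alpha\circ(\huaB^\sharp)^{-1}=(\huaB^\sharp)^{-1}\circ(\alpha^{-1})^\ast\colon A^\ast\longrightarrow A,\qquad T^{-1}=\huaB^\sharp\circ\alpha^{-1}.
\end{equation}
Writing $r\in\Sym^2(A)$ for the symmetric tensor with $r^\sharp=(\huaB^\sharp)^{-1}$, one has $T=r^\sharp\circ(\alpha^{-1})^\ast$, so the assertion to be proved is exactly that $r$ is a Hom-$\frks$-matrix and $T$ is its associated invertible Hom-$\huaO$-operator on the dual of the regular representation $(A^\ast,(\alpha^{-1})^\ast,\ad^\star,-R^\star)$ (the corollary following Theorem \ref{dual-rep}). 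Once $T$ is known to be such an operator, Theorem \ref{Operator-L} returns a compatible Hom-L-dendriform structure whose associated vertical Hom-pre-Lie algebra is $(A,\cdot,\alpha)$.

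First I would verify the two Hom-$\huaO$-operator axioms. Axiom \eqref{eq:opreator-1}, i.e. $T\circ(\alpha^{-1})^\ast=\alpha\circ T$, is immediate from $\huaB^\sharp\circ\alpha=(\alpha^{-1})^\ast\circ\huaB^\sharp$ (equivalently, it is condition \eqref{pro-1} for $r$). For the substantive axiom \eqref{eq:operator-2} I would first record, by unwinding the definitions of the starred operators, the identities $\langle\ad^\star_x\xi,y\rangle=-\langle\xi,[\alpha^{-1}(x),\alpha^{-2}(y)]\rangle$ and $\langle R^\star_x\xi,y\rangle=-\langle\xi,\alpha^{-2}(y)\cdot\alpha^{-1}(x)\rangle$. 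Substituting $x=T(\xi)$, $y=T(\eta)$ into \eqref{eq:operator-2}, applying $\huaB^\sharp$, pairing against an arbitrary $z\in A$, and normalising all $\alpha$-powers by repeated use of \eqref{hom-hessian-1}, the axiom \eqref{eq:operator-2} becomes equivalent to the identity
\begin{equation}
\huaB(x\cdot y,\alpha(z))=-\huaB(\alpha(y),[x,z])+\huaB(\alpha(x),z\cdot y),\qquad\forall\,x,y,z\in A.
\end{equation}

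I expect this identity to be the main obstacle, because it does \emph{not} follow from the $2$-cocycle condition \eqref{hom-hessian-2} alone: the symmetry of $\huaB$ must be used as well. I would argue as follows. Expanding $[x,z]$ and using \eqref{hom-hessian-2} to eliminate $\huaB(x\cdot y,\alpha(z))$, the identity above reduces to
\begin{equation}
\huaB(\alpha(x),y\cdot z)+\huaB(y\cdot x,\alpha(z))=\huaB(\alpha(y),z\cdot x)+\huaB(\alpha(x),z\cdot y).
\end{equation}
Setting $F(a,b,c):=\huaB(a\cdot b,\alpha(c))-\huaB(\alpha(a),b\cdot c)$, the cocycle condition \eqref{hom-hessian-2} says precisely $F(a,b,c)=F(b,a,c)$. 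Rewriting $\huaB(y\cdot x,\alpha(z))-\huaB(z\cdot x,\alpha(y))$ through $F(x,y,z)-F(x,z,y)$ (legitimate by this symmetry, after replacing $\huaB(\alpha(y),z\cdot x)$ by $\huaB(z\cdot x,\alpha(y))$) collapses the claim to
\begin{equation}
\big(\huaB(x\cdot y,\alpha(z))-\huaB(\alpha(z),x\cdot y)\big)-\big(\huaB(x\cdot z,\alpha(y))-\huaB(\alpha(y),x\cdot z)\big)=0,
\end{equation}
and each parenthesis vanishes by the symmetry $\huaB(a,b)=\huaB(b,a)$. This single step, fusing cocycle-ness with symmetry, is the heart of the argument.

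Finally, $T$ is invertible because $\huaB^\sharp$ is, so Theorem \ref{Operator-L} (through Theorem \ref{O-operator-L} and Corollary \ref{O-operator-L1}) produces a compatible Hom-L-dendriform structure on $A$ given by
\begin{equation}
x\triangleright y=T\big(\ad^\star_{\alpha^{-1}(x)}T^{-1}(y)\big),\qquad x\triangleleft y=T\big(R^\star_{\alpha^{-1}(x)}T^{-1}(y)\big).
\end{equation}
To finish, I would apply $\huaB^\sharp$ to both equations, use $T^{-1}=\huaB^\sharp\circ\alpha^{-1}$, pair with an arbitrary $z\in A$, and reduce $\alpha$-powers by \eqref{hom-hessian-1}; the two formulas then simplify to $\huaB(x\triangleright y,z)=-\huaB(y,[\alpha^{-1}(x),\alpha^{-2}(z)])$ and $\huaB(x\triangleleft y,z)=-\huaB(y,\alpha^{-2}(z)\cdot\alpha^{-1}(x))$, which is the claimed structure. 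As a sanity check, $x\triangleright y-y\triangleleft x$ recovers $x\cdot y$, and the corresponding instance of the identity in the second paragraph holds by the computation above.
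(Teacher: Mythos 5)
Your proposal is correct and follows essentially the same route as the paper: both construct the Hom-$\huaO$-operator $(\huaB^\sharp)^{-1}\circ(\alpha^{-1})^\ast$ associated to the representation $(A^\ast,(\alpha^{-1})^\ast,\ad^\star,-R^\star)$, reduce axiom \eqref{eq:operator-2} to the identity $\huaB(x\cdot y,\alpha(z))+\huaB([x,z],\alpha(y))-\huaB(\alpha(x),z\cdot y)=0$ (which both arguments settle by combining \eqref{hom-hessian-2} with the symmetry of $\huaB$), and then invoke Theorem \ref{Operator-L} to read off the two product formulas. Your explicit $F(a,b,c)$ bookkeeping merely spells out the final ``$=0$'' step that the paper leaves implicit.
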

\begin{proof}
By \eqref{hom-hessian-1} and \eqref{hessian}, we obtain that $(\huaB^\sharp)^{-1}\circ (\alpha^{-1})^\ast=\alpha\circ (\huaB^\sharp)^{-1}.$ Thus, we have
\begin{equation}\label{hessian-operator1}
(\huaB^\sharp)^{-1}\circ (\alpha^{-1})^\ast\circ (\alpha^{-1})^\ast=\alpha\circ (\huaB^\sharp)^{-1}\circ (\alpha^{-1})^\ast.
\end{equation}
For all $x,y,z\in A, \xi,\eta,\gamma\in A^*$, set $x=\alpha((\huaB^\sharp)^{-1}(\xi)), y=\alpha((\huaB^\sharp)^{-1}(\eta)), z=\alpha((\huaB^\sharp)^{-1}(\gamma))$, we have
\begin{eqnarray*}&&\langle (\huaB^\sharp)^{-1}((\alpha^{-1})^*(\xi))\cdot (\huaB^\sharp)^{-1}((\alpha^{-1})^*(\eta))-(\huaB^\sharp)^{-1}(\alpha^{-1})^*(\ad^\star((\huaB^\sharp)^{-1}(\alpha^{-1})^*(\alpha^*(\xi)))\eta\\
&&-R^\star((\huaB^\sharp)^{-1}(\alpha^{-1})^*(\alpha^*(\eta)))\xi),(\alpha^{-2})^*(\gamma)\rangle\\
&=&\langle x\cdot y-(\huaB^\sharp)^{-1}(\alpha^{-1})^*(\ad^\star_{(\huaB^\sharp)^{-1}(\xi)}\eta-R^\star_{(\huaB^\sharp)^{-1}(\eta)}\xi),(\alpha^{-2})^*(\gamma)\rangle \\
&=&\langle x \cdot y,\huaB^\sharp(\alpha(z))\rangle-\langle\ad^\star_{\alpha^{-1}(x)}\huaB^\sharp(\alpha^{-1}(y))-R^\star_{\alpha^{-1}(y)}\huaB^\sharp(\alpha^{-1}(x)),z \rangle \\
&=&\huaB(\alpha(z),x\cdot y)+\langle \huaB^\sharp(\alpha^{-1}(y)),[\alpha^{-2}(x),\alpha^{-2}(z)]\rangle-\langle \huaB^\sharp(\alpha^{-1}(x)),\alpha^{-2}(z)\cdot \alpha^{-2}(y)\rangle\\
&=&\huaB(\alpha(z),x\cdot y)+\huaB([\alpha^{-2}(x),\alpha^{-2}(z)],\alpha^{-1}(y))-\huaB(\alpha^{-1}(x),\alpha^{-2}(z)\cdot \alpha^{-2}(y))\\
&=&\huaB(\alpha(z),x\cdot y)+\huaB([x,z],\alpha(y))-\huaB(\alpha(x),z\cdot y)\\
&=&0,
\end{eqnarray*}
which implies that
\begin{eqnarray}\label{Hessian-o-operator-2}
\nonumber &&(\huaB^\sharp)^{-1}((\alpha^{-1})^*(\xi))\cdot (\huaB^\sharp)^{-1}((\alpha^{-1})^*(\eta))\\
 &=&(\huaB^\sharp)^{-1}(\alpha^{-1})^*(\ad^\star((\huaB^\sharp)^{-1}(\alpha^{-1})^*(\alpha^*(\xi)))\eta-R^\star((\huaB^\sharp)^{-1}(\alpha^{-1})^*(\alpha^*(\eta)))\xi).
\end{eqnarray}
 By \eqref{hessian-operator1} and \eqref{Hessian-o-operator-2}, we deduce that $(\huaB^\sharp)^{-1}\circ (\alpha^{-1})^*$ is a Hom-$\huaO$-operator associated to the representation $(A^*,(\alpha^{-1})^*,\ad^\star,-R^\star)$. By Theorem \ref{Operator-L}, there is a compatible Hom-L-dendriform algebra structure on $A$ defined by
\begin{eqnarray*}\huaB(x\triangleright y,z)
&=&\huaB((\huaB^\sharp)^{-1}(\alpha^{-1})^*(\ad^\star_{\alpha^{-1}(x)}\alpha^*(\huaB^\sharp(y))),z)\\
&=&\langle \ad^\star_{\alpha^{-1}(x)}\alpha^*(\huaB^\sharp(y)),\alpha^{-1}(z)\rangle\\
&=&-\langle \huaB^\sharp(y),[\alpha^{-1}(x),\alpha^{-2}(z)]\rangle\\
&=&-\huaB(y,[\alpha^{-1}(x),\alpha^{-2}(z)]).
\end{eqnarray*}
Similarly, we have $\huaB(x\triangleleft y,z)=-\huaB(y,\alpha^{-2}(z)\cdot\alpha^{-1}(x))$. The proof is finished.
\end{proof}
Next we consider the semi-direct product Hom-pre-Lie algebra $A\ltimes_{(\rho^{\star}-\mu^{\star},-\rho^{\star})}V^*$. Any linear map $T:V\longrightarrow A$ can be view as an element $\bar{T}\in \otimes^2(A\oplus V^*)$ via
\begin{equation}
\bar{T}(\xi+u,\eta+v)=\langle T(u),\eta\rangle,  \quad \forall  \xi+u,\eta+v\in A^*\oplus V,
\end{equation}
then $r=\bar{T}+\sigma(\bar{T})$ is symmetric.
\begin{thm}\label{semi-product}
Let $(A,\cdot,\alpha)$ be a Hom-pre-Lie algebra, $(V,\beta,\rho,\mu)$ be a representation of $(A,\cdot,\alpha)$ and $T:V\longrightarrow A$ be a linear map satisfying $T\circ \beta=\alpha\circ T$. Then $r=\bar{T}+\sigma(\bar{T})$ is a Hom-$\frks$-matrix in the Hom-pre-Lie algebra $A\ltimes_{(\rho^{\star}-\mu^{\star},-\rho^{\star})}V^*$ if and only if $T\circ \beta$ is a Hom-$\huaO$-operator.
\end{thm}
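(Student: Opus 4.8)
The plan is to realize $r$ as the symmetric tensor attached to $T$ inside the semidirect product, reduce the Hom-$\frks$-matrix condition to the vanishing of $[[r,r]]$, and then match this with the Hom-$\huaO$-operator equation for $T\circ\beta$ via the criteria of Section 4. Write $\hat A=A\ltimes_{(\rho^\star-\mu^\star,-\rho^\star)}V^*$, whose twist is $\hat\alpha=\alpha\oplus(\beta^{-1})^*$ (the dual summand carrying the canonical twist $(\beta^{-1})^*$ of Theorem \ref{dual-rep}), so that $\hat A^*=A^*\oplus V$. First I would compute $r^\sharp\colon\hat A^*\to\hat A$ directly from the definition of $\bar T$ and of $\sharp$: unravelling the pairing gives $r^\sharp(\xi+u)=T(u)+T^*(\xi)$, where $T^*\colon A^*\to V^*$ is the transpose of $T$. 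Since $r=\bar T+\sigma(\bar T)$ is symmetric by construction, the only remaining parts of the definition of a Hom-$\frks$-matrix are \eqref{pro-1} and $[[r,r]]=0$.

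Next I would verify \eqref{pro-1} for $\hat A$, i.e. $r^\sharp\circ(\hat\alpha^{-1})^*=\hat\alpha\circ r^\sharp$, by splitting along $A\oplus V^*$. The $A$-component reads $T\circ\beta=\alpha\circ T$, while the $V^*$-component reads $T^*\circ(\alpha^{-1})^*=(\beta^{-1})^*\circ T^*$, which upon dualizing is again $T\circ\beta=\alpha\circ T$. Hence both are equivalent to the standing hypothesis, so \eqref{pro-1} holds automatically, and $r$ is a Hom-$\frks$-matrix if and only if $[[r,r]]=0$. Applying the proposition immediately preceding this theorem to $\hat A$ (legitimate, as $r$ is symmetric and satisfies \eqref{pro-1}), this is equivalent to $\mathcal T:=r^\sharp\circ(\hat\alpha^{-1})^*$ being a Hom-$\huaO$-operator on $\hat A$ associated to the dual representation $(\hat A^*,(\hat\alpha^{-1})^*,\ad^\star,-R^\star)$, where $\ad,R$ are the operators of $\hat A$. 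Using $T\circ\beta=\alpha\circ T$ one gets $\mathcal T(\xi+u)=S(u)+(\beta^{-1})^*T^*(\xi)$ with $S:=T\circ\beta$, so that $\mathcal T$ carries the summand $V$ into $A$ by $S$ and the summand $A^*$ into $V^*$ by $(\beta^{-1})^*T^*$.

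Finally I would expand the defining identity \eqref{eq:operator-2} for $\mathcal T$ on $\hat A$, using the semidirect-product multiplication of $\hat A$ and the module structure $(\rho^\star-\mu^\star,-\rho^\star)$ on $V^*$, and decompose both sides along $A\oplus V^*$. Evaluating the $A$-valued component on inputs $u,v\in V$ (that is, $\xi=\eta=0$), and unravelling $\rho^\star,\mu^\star$ through \eqref{eq:1.3}, \eqref{eq:1.4} together with the representation axioms \eqref{rep-1}, \eqref{rep-2} and the compatibility $S\circ\beta=\alpha\circ S$, this component should collapse exactly to
\[
S(u)\cdot S(v)=S\bigl(\rho(S(\beta^{-1}(u)))v+\mu(S(\beta^{-1}(v)))u\bigr),
\]
which is \eqref{eq:operator-2} for $S$. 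The remaining $V^*$-valued components, and the components involving $A^*$-inputs, I would then check to be automatic consequences of $(V,\beta,\rho,\mu)$ being a representation once the $A$-valued identity holds, giving that $\mathcal T$ is a Hom-$\huaO$-operator on $\hat A$ if and only if $S=T\circ\beta$ is a Hom-$\huaO$-operator on $A$. Concatenating the equivalences proves the theorem.

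The main obstacle I expect is the bookkeeping in the last step: reconciling the $\star$-twisted dual actions $\rho^\star,\mu^\star$, which carry $\alpha$- and $(\beta^{-2})^*$-twists, with the plain $\rho,\mu$ and the single $\beta^{-1}$ occurring in the Hom-$\huaO$-operator equation, and in particular showing that the $V^*$-valued part of the equation for $\mathcal T$ degenerates to an identity rather than imposing an independent constraint.
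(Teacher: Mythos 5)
Your argument is correct in outline, but it takes a genuinely different route from the paper's. The paper proves Theorem \ref{semi-product} by brute force: it expands $r=T(v_i)\otimes v_i^*+v_i^*\otimes T(v_i)$ in a basis, computes the four tensors $r_{12}\cdot r_{23}$, $-r_{23}\cdot r_{12}$, $-r_{13}\cdot r_{12}$, $r_{13}\cdot r_{23}$ inside the semidirect product, and observes that every nonzero component of $[[r,r]]$ is a copy of one $A$-valued expression whose vanishing is exactly \eqref{eq:operator-2} for $T\circ\beta$. You instead verify \eqref{pro-1} for $r$ directly (both components reduce to $T\circ\beta=\alpha\circ T$, as you say --- a point the paper silently skips), invoke the unnumbered proposition following the definition of Hom-$\huaO$-operators (not literally the proposition immediately preceding the theorem, but clearly the one you mean) to convert $[[r,r]]=0$ into the statement that $\mathcal T=r^\sharp\circ(\hat\alpha^{-1})^*$ is a Hom-$\huaO$-operator for the dual regular representation of the semidirect product, and then match that condition against \eqref{eq:operator-2} for $S=T\circ\beta$. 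This reuses Section 4 instead of recomputing $[[r,r]]$, at the price of unwinding the $\star$-twisted operators of the big algebra on the summands of $A^*\oplus V$. Your formulas for $r^\sharp$ and $\mathcal T$ are right, and on inputs from $V\times V$ one indeed finds that the big algebra's $\ad^\star$ and $-R^\star$ applied by an element $a\in A$ to $v\in V$ untwist to $\rho(a)v$ and $\mu(a)v$ respectively, so that component is precisely \eqref{eq:operator-2} for $S$.

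Two caveats. First, the phrase ``automatic consequences of the representation axioms'' oversells the last step: the components with one input in $A^*$ and one in $V$ are not vacuous; they are quadratic in $T$ and are, up to dualizing, transposes of the same identity \eqref{eq:operator-2} for $S$, so they do follow from it together with the representation axioms, but they must actually be checked --- this is exactly the bookkeeping you flag as the expected obstacle. Only the $A^*\times A^*$ component is genuinely trivial, because $V^*$ is a square-zero ideal of the semidirect product. Second, for your final identification to come out right the right action on $V^*$ must be $-\mu^\star$, not the $-\rho^\star$ printed in the statement: Theorem \ref{dual-rep} gives the dual representation as $(V^*,(\beta^{-1})^*,\rho^\star-\mu^\star,-\mu^\star)$, the paper's own expansion of $[[r,r]]$ uses $-\mu^\star$ (the terms $\mu^\star(T(v_j))v_i^*$ arise from $v_i^*\diamond T(v_j)$), and with $-\rho^\star$ your $V\times V$ component would produce $\rho$ in both slots instead of \eqref{eq:operator-2}. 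Your setup, which appeals to Theorem \ref{dual-rep} for the twist on the dual summand, implicitly corrects this typo; you should make the correction explicit.
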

\begin{proof}
Let $\{v_1,\dots,v_n\}$ be a basis of $V$ and $\{v_1^*,\dots,v_n^*\}$ be its dual basis. It is obvious that $\bar{T}$ can be expressed by $\bar{T}=T(v_i)\otimes v_i^*$. Here the Einstein summation convention is used. Therefore, we can write $r=T(v_i)\otimes v_i^*+v_i^*\otimes T(v_i)$. Then we have
\begin{eqnarray*}r_{12} \cdot r_{23}
&=&-\alpha(T(v_i))\otimes \mu^\star(T(v_j))v_i^*\otimes(\beta^{-1})^*(v_j^*)+(\beta^{-1})^*(v_i^*)\otimes T(v_i)\cdot T(v_j)\otimes(\beta^{-1})^*(v_j^*)\\
&&+(\beta^{-1})^*(v_i^*)\otimes \rho^\star(T(v_i))v_j^*\otimes \alpha(T(v_j))-(\beta^{-1})^*(v_i^*)\otimes \mu^\star(T(v_i))v_j^*\otimes \alpha(T(v_j)),\\
-r_{23} \cdot r_{12}
&=&-\alpha(T(v_j))\otimes \rho^\star(T(v_i))v_j^*\otimes(\beta^{-1})^*(v_i^*)+\alpha(T(v_j))\otimes \mu^\star(T(v_i))v_j^*\otimes(\beta^{-1})^*(v_i^*)\\
&&-(\beta^{-1})^*(v_j^*)\otimes T(v_i)\cdot T(v_j) \otimes(\beta^{-1})^*(v_i^*)+(\beta^{-1})^*(v_j^*)\otimes \mu^\star(T(v_j))v_i^*\otimes \alpha(T(v_i)),\\
-r_{13} \cdot r_{12}
&=&-T(v_i)\cdot T(v_j)\otimes (\beta^{-1})^*(v_j^*)\otimes(\beta^{-1})^*(v_i^*)-\rho^\star(T(v_i))v_j^* \otimes\alpha(T(v_j))\otimes (\beta^{-1})^*(v_i^*)\\
&&+\mu^\star(T(v_i))v_j^* \otimes\alpha(T(v_j))\otimes (\beta^{-1})^*(v_i^*)+ \mu^\star(T(v_j))v_i^*\otimes(\beta^{-1})^*(v_j^*)\otimes \alpha(T(v_i)),\\r_{13} \cdot r_{23}
&=&-\alpha(T(v_i))\otimes (\beta^{-1})^*(v_j^*)\otimes\mu^\star(T(v_j))v_i^*+(\beta^{-1})^*(v_i^*)\otimes\alpha(T(v_j))\otimes \rho^\star(T(v_i))v_j^* \\
&&-(\beta^{-1})^*(v_i^*)\otimes\alpha(T(v_j))\otimes \mu^\star(T(v_i))v_j^*+(\beta^{-1})^*(v_i^*)\otimes(\beta^{-1})^*(v_j^*)\otimes-T(v_i)\cdot T(v_j).
\end{eqnarray*}
By direct computations, we have
\begin{eqnarray*}[[r,r]]
&=&\langle(\beta^{-1})^*(v_i^*),v_m\rangle v_m^*\otimes T(v_i)\cdot T(v_j)\otimes \langle(\beta^{-1})^*(v_j^*),v_n\rangle v_n^*\\
&&+\langle(\beta^{-1})^*(v_i^*),v_m\rangle v_m^*\otimes \langle \rho^\star(T(v_i))v_j^*,v_n\rangle v_n^*\otimes \alpha(T(v_j))\\
&&-\alpha(T(v_j))\otimes \langle \rho^\star(T(v_i))v_j^*,v_m\rangle v_m^*\otimes \langle(\beta^{-1})^*(v_i^*),v_n\rangle v_n^*\\
&&-\langle(\beta^{-1})^*(v_j^*),v_m\rangle v_m^*\otimes T(v_i)\cdot T(v_j)\otimes \langle(\beta^{-1})^*(v_i^*),v_n\rangle v_n^*\\
&&-T(v_i)\cdot T(v_j)\otimes \langle(\beta^{-1})^*(v_j^*),v_m\rangle v_m^*\otimes\langle(\beta^{-1})^*(v_i^*),v_n\rangle v_n^*\\
&&-\langle \rho^\star(T(v_i))v_j^*,v_m\rangle v_m^*\otimes \alpha(T(v_j))\otimes\langle(\beta^{-1})^*(v_i^*),v_n\rangle v_n^*\\
&&+\langle \mu^\star(T(v_i))v_j^*,v_m\rangle v_m^*\otimes \alpha(T(v_j))\otimes\langle(\beta^{-1})^*(v_i^*),v_n\rangle v_n^*\\
&&+\langle \mu^\star(T(v_j))v_i^*,v_m\rangle v_m^*\otimes\langle(\beta^{-1})^*(v_j^*),v_n\rangle v_n^*\otimes\alpha(T(v_i))\\
&&-\alpha(T(v_i))\otimes\langle(\beta^{-1})^*(v_j^*),v_m\rangle v_m^*\otimes \langle \mu^\star(T(v_j))v_i^*,v_n\rangle v_n^*\\
&&+\langle(\beta^{-1})^*(v_i^*),v_m\rangle v_m^*\otimes \alpha(T(v_j))\otimes\langle \rho^\star(T(v_i))v_j^*,v_n\rangle v_n^*\\
&&-\langle(\beta^{-1})^*(v_i^*),v_m\rangle v_m^*\otimes \alpha(T(v_j))\otimes\langle \mu^\star(T(v_i))v_j^*,v_n\rangle v_n^*\\
&&+\langle(\beta^{-1})^*(v_i^*),v_m\rangle v_m^*\otimes \langle(\beta^{-1})^*(v_j^*),v_n\rangle v_n^*\otimes T(v_i)\cdot T(v_j)\\
&=&v_m^*\otimes\langle v_i^*,\beta^{-1}(v_m)\rangle\langle v_j^*,\beta^{-1}(v_n)\rangle T(v_i)\cdot T(v_j)\otimes v_n^*\\
&&-v_m^*\otimes v_n^*\otimes \langle v_i^*,\beta^{-1}(v_m)\rangle\langle v_j^*,\rho(T(\beta^{-1}(v_i)))\beta^{-2}(v_n)\rangle\alpha(T(v_j))\\
&&+\langle v_j^*,\rho(T(\beta^{-1}(v_i)))\beta^{-2}(v_m)\rangle\langle v_i^*,\beta^{-1}(v_n)\rangle \alpha(T(v_j))\otimes v_m^*\otimes v_n^*\\
&&-v_m^*\otimes\langle v_j^*,\beta^{-1}(v_m)\rangle\langle v_i^*,\beta^{-1}(v_n)\rangle T(v_i)\cdot T(v_j)\otimes  v_n^*\\
&&-\langle v_j^*,\beta^{-1}(v_m)\rangle\langle v_i^*,\beta^{-1}(v_n)\rangle T(v_i)\cdot T(v_j)\otimes v_m^*\otimes v_n^*\\
&&+v_m^*\otimes\langle v_j^*,\rho(T(\beta^{-1}(v_i)))\beta^{-2}(v_m)\rangle\langle v_i^*,\beta^{-1}(v_n)\rangle\alpha(T(v_j))\otimes v_n^*\\
&&-v_m^*\otimes\langle v_j^*,\mu(T(\beta^{-1}(v_i)))\beta^{-2}(v_m)\rangle\langle v_i^*,\beta^{-1}(v_n)\rangle\alpha(T(v_j))\otimes v_n^*\\
&&-v_m^*\otimes v_n^*\otimes\langle v_i^*,\mu(T(\beta^{-1}(v_j)))\beta^{-2}(v_m)\rangle\langle v_j^*,\beta^{-1}(v_n)\rangle\alpha(T(v_i))\\
&&+\langle v_j^*,\beta^{-1}(v_m)\rangle\langle v_i^*,\mu(T(\beta^{-1}(v_j)))\beta^{-2}(v_n)\rangle\alpha(T(v_i))\otimes v_m^*\otimes v_n^*\\
&&-v_m^*\otimes\langle v_i^*,\beta^{-1}(v_m)\rangle\langle v_j^*,\rho(T(\beta^{-1}(v_i)))\beta^{-2}(v_n)\rangle\alpha(T(v_j))\otimes v_n^*\\
&&+v_m^*\otimes\langle v_i^*,\beta^{-1}(v_m)\rangle\langle v_j^*,\mu(T(\beta^{-1}(v_i)))\beta^{-2}(v_n)\rangle\alpha(T(v_j))\otimes v_n^*\\
&&+v_m^*\otimes v_n^*\otimes\langle v_i^*,\beta^{-1}(v_m)\rangle\langle v_j^*,\beta^{-1}(v_n)\rangle T(v_i)\cdot T(v_j)\\
&=&v_m^*\otimes T(\beta^{-1}(v_m))\cdot T(\beta^{-1}(v_n))\otimes v_n^*-v_m^*\otimes v_n^*\otimes T\beta(\rho(T(\beta^{-2}(v_m)))\beta^{-2}(v_n))\\
&&+T\beta(\rho(T(\beta^{-2}(v_n)))\beta^{-2}(v_m))\otimes v_m^*\otimes v_n^*- v_m^*\otimes T(\beta^{-1}(v_n))\cdot T(\beta^{-1}(v_m))\otimes v_n^*\\
&&-T(\beta^{-1}(v_n))\cdot T(\beta^{-1}(v_m))\otimes v_m^*\otimes v_n^*+v_m^*\otimes  T\beta(\rho(T(\beta^{-2}(v_n)))\beta^{-2}(v_m))\otimes v_n^*\\
&&-v_m^*\otimes T\beta(\mu(T(\beta^{-2}(v_n)))\beta^{-2}(v_m))\otimes v_n^*-v_m^*\otimes v_n^*\otimes T\beta(\mu(T(\beta^{-2}(v_n)))\beta^{-2}(v_m))\\
&&+T\beta(\mu(T(\beta^{-2}(v_m)))\beta^{-2}(v_n))\otimes v_m^*\otimes v_n^*-v_m^*\otimes T\beta(\rho(T(\beta^{-2}(v_m)))\beta^{-2}(v_n))\otimes v_n^*\\
&&+v_m^*\otimes T\beta(\mu(T(\beta^{-2}(v_m)))\beta^{-2}(v_n))\otimes v_n^*+v_m^*\otimes v_n^*\otimes T(\beta^{-1}(v_m))\cdot T(\beta^{-1}(v_n))\\
&=&v_m^*\otimes (T(\beta^{-1}(v_m))\cdot T(\beta^{-1}(v_n))-T\beta(\rho(T(\beta^{-2}(v_m)))\beta^{-2}(v_n)
-\mu(T(\beta^{-2}(v_n)))\beta^{-2}(v_m))\otimes v_n^*\\
&&+v_m^*\otimes v_n^*\otimes (T(\beta^{-1}(v_m))\cdot T(\beta^{-1}(v_n))-T\beta(\rho(T(\beta^{-2}(v_m)))\beta^{-2}(v_n)
-\mu(T(\beta^{-2}(v_n)))\beta^{-2}(v_m))\\
&&-(T(\beta^{-1}(v_n))\cdot T(\beta^{-1}(v_m))-T\beta(\rho(T(\beta^{-2}(v_n)))\beta^{-2}(v_m)
-\mu(T(\beta^{-2}(v_m)))\beta^{-2}(v_n))\otimes v_m^*\otimes v_n^*\\
&&-v_m^*\otimes (T(\beta^{-1}(v_n))\cdot T(\beta^{-1}(v_m))-T\beta(\rho(T(\beta^{-2}(v_n)))\beta^{-2}(v_m)
-\mu(T(\beta^{-2}(v_m)))\beta^{-2}(v_n))\otimes v_n^*,
\end{eqnarray*}
which implies that $[[r,r]]=0$ if and only if for all $u,v\in V$,
\begin{equation}
T\beta(\beta^{-2}(u))\cdot T\beta(\beta^{-2}(v))=T\beta(\rho(T\beta(\beta^{-1}(\beta^{-2}(u))))\beta^{-2}(v)+\mu(T\beta(\beta^{-1}(\beta^{-2}(v))))\beta^{-2}(u)).
\end{equation}
Furthermore, since $T\circ \beta=\alpha\circ T$, it is obvious that $T\circ \beta$ satisfies $(T\circ \beta)\circ \beta=\alpha \circ(T\circ \beta)$. Thus, $r$ is a  Hom-$\frks$-matrix in the Hom-pre-Lie algebra $A\ltimes_{(\rho^{\star}-\mu^{\star},-\rho^{\star})}V^*$ if and only if $T\circ \beta$ is a Hom-$\huaO$-operator.
\end{proof}
\begin{cor}
Let $(A,\triangleright,\triangleleft,\alpha)$ be a Hom-L-dendriform algebra. Then $r=v_i\otimes v_i^*+v_i^*\otimes v_i$ is a Hom-$\frks$-matrix in the associated vertical Hom-pre-Lie algebra $A\ltimes_{(L_\triangleright^*+L_\triangleleft^*,L_\triangleleft^*)}A^*.$
\end{cor}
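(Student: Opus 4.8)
The plan is to realize $r$ as a special instance of Theorem \ref{semi-product}, taking for the ambient representation the dual of the one furnished by the Hom-L-dendriform structure and taking $T=\Id$.

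First I would set $V=A$, $\beta=\alpha$, $\rho=L_\triangleright$ and $\mu=-L_\triangleleft$. By Proposition \ref{rep}(2), $(A,\alpha,L_\triangleright,-L_\triangleleft)$ is a representation of the associated vertical Hom-pre-Lie algebra $(A,\cdot,\alpha)$, where $x\cdot y=x\triangleright y-y\triangleleft x$. For this choice the dual data on $A^*$ specialize to $\rho^\star-\mu^\star=L_\triangleright^\star+L_\triangleleft^\star$ and $-\mu^\star=L_\triangleleft^\star$, so that the ambient semidirect product is exactly the $A\ltimes_{(L_\triangleright^*+L_\triangleleft^*,L_\triangleleft^*)}A^*$ appearing in the statement.

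Next I would take $T=\Id_A\colon V=A\to A$. The compatibility $T\circ\beta=\alpha\circ T$ holds trivially, and in the notation of Theorem \ref{semi-product} one has $\bar{T}=T(v_i)\otimes v_i^*=v_i\otimes v_i^*$, whence $r=\bar{T}+\sigma(\bar{T})=v_i\otimes v_i^*+v_i^*\otimes v_i$ is precisely the element in the statement. By Theorem \ref{semi-product}, $r$ is then a Hom-$\frks$-matrix in this semidirect product if and only if $T\circ\beta=\alpha$ is a Hom-$\huaO$-operator associated to $(A,\alpha,L_\triangleright,-L_\triangleleft)$.

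It therefore remains only to check that $\alpha$ is a Hom-$\huaO$-operator, a fact already recorded in the converse direction of Theorem \ref{Operator-L}. Condition \eqref{eq:opreator-1} is immediate since $\alpha\circ\alpha=\alpha\circ\alpha$. For \eqref{eq:operator-2}, writing the operator as $\alpha$ and using $\alpha(\beta^{-1}(u))=u$, I would compute for all $u,v\in A$
$$\alpha\big(\rho(u)v+\mu(v)u\big)=\alpha(u\triangleright v-v\triangleleft u)=\alpha(u\cdot v)=\alpha(u)\cdot\alpha(v),$$
which is exactly \eqref{eq:operator-2} for $\alpha$. I do not expect a genuine obstacle: the whole statement is a specialization of Theorem \ref{semi-product}. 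The only points needing care are the bookkeeping identifying $r$ with $\bar{T}+\sigma(\bar{T})$ for $T=\Id$, the correct specialization of the dual representation data on $A^*$, and the use of the identity $x\cdot y=x\triangleright y-y\triangleleft x$ in verifying the Hom-$\huaO$-operator equation for $\alpha$.
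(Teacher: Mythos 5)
Your proposal is correct and follows essentially the same route as the paper: specialize Theorem \ref{semi-product} with $T=\Id$, using Proposition \ref{rep} to get the representation $(A,\alpha,L_\triangleright,-L_\triangleleft)$ and Theorem \ref{dual-rep} to identify the semidirect product, then note that $\alpha=\Id\circ\alpha$ is a Hom-$\huaO$-operator. You merely spell out the verification of \eqref{eq:operator-2} for $\alpha$, which the paper leaves to the converse direction of Theorem \ref{Operator-L}; the extra detail is harmless and correct.
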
\begin{proof}
By Proposition \ref{rep} and Proposition \ref{dual-rep}, $(A^*,(\alpha^{-1})^*,L_\triangleright^*+L_\triangleleft^*,L_\triangleleft^*)$ is a dual representation of the  associated vertical Hom-pre-Lie algebra $(A,\cdot,\alpha)$. Moreover, $\alpha=\Id \circ \alpha:A\longrightarrow A$ is a Hom-$\huaO$-operator associated to the representation $(A,\alpha,L_\triangleright,-L_\triangleleft)$. By Theorem \ref{semi-product}, $r=v_i\otimes v_i^*+v_i^*\otimes v_i$ is a Hom-$\frks$-matrix.
\end{proof}


\begin{thebibliography}{999}
\bibitem{AEM} F. Ammar, Z. Ejbehi and A. Makhlouf, Cohomology and deformations
of Hom-algebras. \emph{J. Lie Theory} 21 (2011), no. 4, 813--836.

 \bibitem{Pre-lie algebra in geometry} D. Burde, Left-symmetric algebras and pre-Lie algebras in
geometry and physics, {\em Cent. Eur. J. Math.} 4 (2006), 323--357.

 \bibitem{Bai}C. Bai, Left-symmetric Bialgebras and Analogue of the Classical Yang-Baxter Equation. \emph{ Commun. Contemp. Math. }10 (2008), no. 2, 221--260.
     

\bibitem{BM}
S. Benayadi and A. Makhlouf, Hom-Lie Algebras with Symmetric Invariant NonDegenerate Bilinear Forms. \emph{J. Geom. Phys.} 76 (2014), 38--60.

\bibitem{Cai-Sheng}
L. Cai and Y. Sheng, Purely Hom-Lie bialgebras.
\emph{Sci. China Math.} 61 (2018), no. 9, 1553--1566.




\bibitem{ELMS}
O. Elchinger, K. Lundengard, A. Makhlouf and S. Silvestrov, Brackets with $(\sigma,\tau)$-derivations and $(p,q)$-deformations of Witt and Virasoro algebras. \emph{Forum Math.} 28 (2016), no. 4, 657--673.


\bibitem{HLS}
J. Hartwig, D. Larsson and S. Silvestrov, Deformations of Lie
algebras using $\sigma$-derivations. \emph{J. Algebra} 295 (2006),
314--361.

 
 

 
\bibitem{LSS}
S. Liu, L. Song  and T. Rong, Representations and cohomologies of regular Hom-pre-Lie algebras. \emph{J. Algebra Appl.} DOI: 10.1142/S0219498820501492.






\bibitem{MS2} A. Makhlouf and S. Silvestrov, Hom-algebra structures. \emph{J. Gen. Lie Theory Appl.}  2 (2) (2008),   51--64.


\bibitem{MS3} A. Makhlouf and S. Silvestrov, Hom-algebras and Hom-coalgebras. \emph{J. Algebra Appl. }9(4) (2010), 1--37. 


\bibitem{sheng3}
Y. Sheng, Representations of Hom-Lie algebras. \emph{Algebr. Represent. Theory} 15 (2012), no. 6, 1081--1098.

\bibitem{sheng1}
Y. Sheng and C. Bai, A new approach to hom-Lie bialgebras. \emph{J. Algebra} 399 (2014), 232--250.

\bibitem{SC}
Y. Sheng and D. Chen, Hom-Lie 2-algebras. \emph{J. Algebra} 376 (2013), 174--195.

\bibitem{sunbing}
B. Sun, L. Chen and X. Zhou, On universal $\alpha$-central extensions of Hom-pre-Lie algebras. arXiv:1810.09848.


\bibitem{QH}
Q. Sun and H. Li, On parak\"{a}hler hom-Lie algebras and hom-left-symmetric bialgebras. \emph{Comm. Algebra.} 45 (2017), no. 1, 105--120.

\bibitem{TY}
Y. Tao, C. Bai and L. Guo, Another approach to Hom-Lie bialgebras via manin triples. arXiv:1903.10007.




\bibitem{Yao1}
 D. Yau, The Hom-Yang-Baxter equation, Hom-Lie algebras, and quasi-triangular bialgebras. \emph{J. Phys. A.} 42 (2009), no. 16, 165202, 12 pp.


\bibitem{Yao3}
D. Yau, The classical Hom-Yang-Baxter equation and Hom-Lie bialgebras. \emph{Int. Electron. J. Algebra} 17 (2015), 11--45.

\bibitem{Qing}
Q. Zhang, H. Yu and C. Wang, Hom-Lie algebroids and hom-left-symmetric algebroids. \emph{J. Geom. Phys.} 116 (2017), 187--203.



\end{thebibliography}
 \end{document}